\newcommand{\Def}[1]{\textcolor{blue}{#1}}
\newcommand\xqed[1]{%
  \leavevmode\unskip\penalty9999 \hbox{}\nobreak\hfill
  \quad\hbox{#1}}
\newcommand\EndRemark{\xqed{$\triangle$}}
\newcommand{\Q}{\mathbb{Q}}
\newcommand{\Z}{\mathbb{Z}}
\newcommand{\R}{\mathbb{R}}
\newcommand{\C}{\mathbb{C}}
\newcommand{\eps}{\varepsilon}
\newcommand{\vphi}{\varphi}
\newcommand{\wtilde}{\widetilde}
\newcommand{\sSet}{\mathsf{sSet}}	
\newcommand{\Top}{\mathsf{Top}}		
\newcommand{\Arr}{\rightarrow}
\newcommand{\Man}{\mathsf{Man}^q_{\mathrm{dR}}}
\DeclareMathOperator{\colim}{colim}
\newcommand{\Grass}{\mathcal{L}}
\newcommand{\Lag}{\operatorname{lag}}
\newcommand{\Glag}{\Omega^{\Lag}}
\newcommand{\Gform}{\Omega^{\operatorname{form}}}
\DeclareMathOperator{\Th}{Th}
\newcommand{\Sus}{\Sigma}
\newcommand{\Smash}{\wedge}
\newcommand{\RLag}{\mathcal{R}_{\Lag}}
\DeclareMathOperator{\id}{id}
\DeclareMathOperator{\Image}{Image}
\DeclareMathOperator{\Hom}{Hom}
\newcommand{\tens}{\otimes}
\DeclareMathOperator{\Ext}{Ext}
\DeclareMathOperator{\Tor}{Tor}
\newcommand{\bdry}{\partial}
\newcommand{\iso}{\cong}
\newcommand{\del}{\partial}
\DeclareMathOperator{\inc}{inc}
\DeclareMathOperator{\flux}{flux}
\DeclareMathOperator{\stab}{st}
\DeclareMathOperator{\proj}{proj}
\DeclareMathOperator{\Gr}{Gr}
\newcommand{\Per}{\operatorname{Per}_{\omega}}
\newcommand{\pb}{\text{\scriptsize{pb}}}
\newcommand{\hmtpyeq}{\simeq}
\newcommand{\Sect}{\Gamma}
\DeclareMathOperator{\Lifts}{Lifts}
\DeclareMathOperator{\Triv}{triv}
\newcommand{\obs}{\mathfrak{o}}
\DeclareMathOperator{\dR}{dR}
\DeclareMathOperator{\Sing}{Sing}
\newcommand{\SingSm}{\Sing^{\infty}}
\theoremstyle{plain}
\newtheorem{theorem}{Theorem}[section]
\newtheorem{proposition}[theorem]{Proposition}
\newtheorem{lemma}[theorem]{Lemma}
\newtheorem{corollary}[theorem]{Corollary}
\newtheorem{definition}[theorem]{Definition}
\theoremstyle{definition}
\newtheorem{remark}[theorem]{Remark}
\newtheorem{assumption}{Assumption}
\author{Dominique Rathel-Fournier}
\email{d.rathelfournier@gmail.com}
\title{On cobordism groups of Lagrangian immersions}
\begin{document}

\begin{abstract}
We compute the cobordism group $\Glag(M)$ of Lagrangian immersions into a symplectic manifold $(M, \omega)$
in terms of a stable homotopy group of a Thom spectrum constructed from $M$.
This generalizes a result of Eliashberg \cite{Eliashberg} in the case of exact symplectic manifolds.
As applications, we compute $\Glag(M)$ when $M$ is a closed surface and 
give a partial description of $\Glag(M)$ when $M$ is a monotone manifold.
\end{abstract}

\maketitle

\thispagestyle{empty}

\tableofcontents

\section{Introduction}
\label{section:intro}

Let $(M, \omega)$ be a symplectic manifold.
In this work, we
consider the Lagrangian cobordism group $\Glag(M)$ of
Lagrangian immersions of closed manifolds in $M$.
The study of these cobordism groups is a
classical topic in symplectic topology.
They were introduced by Arnold \cite{Arnold},
who computed them for $M = \C$ and $M = T^*S^1$.

In the case of an exact symplectic manifold $(M, d \lambda)$,
the group $\Glag(M)$ was computed by Eliashberg \cite{Eliashberg} 
in terms of the stable homotopy groups of a certain Thom spectrum constructed from $M$.
The key ingredient of the proof is the h-principle for Lagrangian immersions \cite{Gromov-icm, Lees},
which reduces the existence problem for Lagrangian immersions to a purely topological problem.
More detailed computations of the structure of $\Glag(M)$
were carried out by Audin \cite{Audin-calculs, Audin-thesis, Audin-cotangent} in the case of 
$\C^n$ and of cotangent bundles.

The goal of this work is to extend Eliashberg's computation of $\Glag(M)$ to general symplectic manifolds.
As in the exact case, we will show that $\Glag(M)$
is isomorphic to a stable homotopy group of a Thom spectrum constructed from $M$.

\subsection{Main result}

In the remainder of this work, we assume that all manifolds and cobordisms are oriented.
The non-oriented case can be treated by similar methods.

In order to state the main result,
we give a brief description of the relevant spectrum.
A more precise definition is given in Section \ref{section:cob_group_tangential_struct}.
As in the classical setting, this spectrum is the Thom spectrum associated to a
cobordism theory of manifolds equipped with a certain stable tangential structure 
(see Section \ref{subsection:classical_cobordism} for the definitions relevant to cobordism theory).

Inspired by the h-principle, we consider the cobordism theory of manifolds 
equipped with a \emph{stable formal Lagrangian immersion} into $M$.
Here, a formal Lagrangian immersion of a manifold $L$ is a Lagrangian bundle map $TL \to TM$ covering
a map $f:L \to M$ satisfying $[f^*\omega] = 0 \in H^2(L; \R)$,
along with a choice of trivialization of
$f^*\omega$.
A trivialization of $f^*\omega$ can be equivalently seen
as a cohomology class of primitives of $f^*\omega$, or as a homotopy class of lifts of $f$ in the following diagram:
\[
\begin{tikzcd}
											&E^{\omega} \arrow{d}\\
L \arrow{r}[swap]{f} \arrow[dashed]{ur}		&M
\end{tikzcd}
\]
where $E^{\omega} \to M$ is the homotopy fiber of a map $M \to K(\R,2)$ that classifies $[\omega] \in H^2(M; \R)$.
Moreover, \emph{stabilization} of formal solutions corresponds to replacing $TL$ by $\R^k \oplus TL$ and $TM$ by $\C^k \oplus TM$ for large $k$.

In Section \ref{section:cob_group_tangential_struct}, 
we construct a natural classifying space $\Grass_{\infty}^{\omega}M$ for stable formal Lagrangian immersions.
Roughly speaking, $\Grass_{\infty}^{\omega}M$ is obtained from the stable Lagrangian Grassmannian $\Grass_{\infty} M$ of $M$ by 
killing the cohomology class of $\omega$.
The space $\Grass_{\infty}^{\omega}M$ carries a "tautological" stable vector bundle $\gamma^{\omega} \to \Grass_{\infty}^{\omega}M$,
with the property that stable formal Lagrangian immersions of $L$ into $M$
correspond to maps of stable bundles $TL \to \gamma^{\omega}$.
We denote by $-\gamma^{\omega}$ the stable inverse of $\gamma^{\omega}$,
which has the property that a $\gamma^{\omega}$-structure on $TL$ corresponds
to a $-\gamma^{\omega}$-structure on the stable normal bundle $\nu_L$.
We let $\Th(-\gamma^{\omega})$ be the associated Thom spectrum.
The main result of this work is as follows.

\begin{theorem}
\label{thm:main_thm}
For a symplectic manifold $(M, \omega)$ of dimension $2n$, there is a natural isomorphism
\begin{equation}
\label{eq:iso_main_thm}
\Glag(M, \omega) \iso \pi_n \, \Th( -\gamma^{\omega}).
\end{equation}
\end{theorem}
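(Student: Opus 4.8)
The strategy is the classical Pontryagin--Thom argument, with the Gromov--Lees h-principle for Lagrangian immersions controlling the passage from geometry to homotopy theory. Write $\Gform(M)$ for the cobordism group whose objects are closed $n$-manifolds equipped with a stable formal Lagrangian immersion into $M$, and whose cobordism relation is represented by a stable formal Lagrangian immersion of a compact cobordism into $(M \times \C,\, \omega \oplus \omega_{\mathrm{std}})$ that is cylindrical over the two ends of the $\C$-factor; addition is disjoint union. Since a genuine Lagrangian immersion of a closed manifold into $M$ is in particular a formal one, and a Lagrangian cobordism between Lagrangian immersions $L_0, L_1 \looparrowright M$ is by definition (Arnold \cite{Arnold}) a Lagrangian immersion into $M \times \C$ cylindrical over $L_0$ and $L_1$, there is a tautological forgetful homomorphism $\Glag(M) \to \Gform(M)$. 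The proof breaks into three parts: (1) this map is an isomorphism; (2) $\Gform(M)$ is naturally isomorphic to the cobordism group of closed $n$-manifolds whose stable normal bundle carries a $-\gamma^{\omega}$-structure; and (3) the latter group is $\pi_n \Th(-\gamma^{\omega})$.

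For part (1), surjectivity is the existence statement of the h-principle \cite{Gromov-icm, Lees}: every formal Lagrangian immersion of a closed $L^n$ into $M$ is formally homotopic to a genuine one, and a formal homotopy yields a formal Lagrangian cobordism into $M \times \C$ (use the homotopy on $L \times [0,1]$ together with the embedding of $[0,1]$ as a Lagrangian segment in $\C$), so formally homotopic solutions become formally cobordant. Injectivity uses the parametric and relative form of the h-principle: if genuine Lagrangian immersions $L_0, L_1 \looparrowright M$ bound a stable formal Lagrangian cobordism $V$ into $M \times \C$, one applies the h-principle to $V$ rel its already-genuine cylindrical ends to deform it into a genuine Lagrangian cobordism, whence $[L_0] = [L_1]$ in $\Glag(M)$. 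Two routine points must be dispatched to set this up cleanly: first, stabilization induces a bijection between cobordism classes of the \emph{unstable} formal Lagrangian immersions directly produced by the h-principle and of the stable ones, which follows by obstruction theory from the connectivity of the maps $U(k)/O(k) \to U(k+1)/O(k+1)$ relative to the dimensions $n$ and $n+1$ occurring; second, one invokes the h-principle for Lagrangian immersions of compact manifolds with boundary into $M \times \C$ subject to cylindrical boundary conditions, which is the standard relative version.

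For part (2), the construction of $\Grass_{\infty}^{\omega}M$ and the tautological bundle $\gamma^{\omega}$ in Section \ref{section:cob_group_tangential_struct} furnishes, by definition, a bijection between stable formal Lagrangian immersions of $L$ into $M$ and bundle maps $TL \to \gamma^{\omega}$, i.e. $\gamma^{\omega}$-structures on the stable tangent bundle of $L$, equivalently $-\gamma^{\omega}$-structures on $\nu_L$. To carry this through cobordisms I would observe that the canonical comparison map $\Grass_{\infty}^{\omega}(M \times \C) \to \Grass_{\infty}^{\omega}M$ is a weak equivalence compatible with the tautological stable bundles: the extra $\C$-summand contributes a single stabilization of the Lagrangian Grassmannian and does not change the relevant cohomology class, since $\omega_{\mathrm{std}}$ is exact. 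Consequently a stable formal Lagrangian cobordism into $M \times \C$ that is cylindrical over its ends is exactly a cobordism, in the usual structured sense, between closed $n$-manifolds carrying $-\gamma^{\omega}$-structures on their stable normal bundles; here the $\C$-factor plays the role of the extra normal direction that the Lagrangian condition converts, via $\nu \cong T^*$, into the stable normal structure. Part (3) is then the Thom--Pontryagin isomorphism for manifolds with a stable normal structure, which is an isomorphism of groups; composing the three parts, and noting that each is natural in $(M,\omega)$, yields \eqref{eq:iso_main_thm}.

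I expect part (1) to be the main obstacle. The Gromov--Lees h-principle is precisely the right tool, but making it operate at the level of cobordism groups requires care: formulating and applying the relative/parametric h-principle for manifolds with cylindrical ends inside $M \times \C$, matching the formal data on those ends with the given formal Lagrangian immersions of $L_0$ and $L_1$ into $M$, and verifying that the unstable-to-stable comparison is compatible with all of these identifications. Parts (2) and (3) are standard homotopy theory once $\Grass_{\infty}^{\omega}M$ has been constructed as in Section \ref{section:cob_group_tangential_struct}.
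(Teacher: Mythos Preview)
Your three-step decomposition matches the paper's exactly, and steps (2) and (3) are handled the same way. The genuine gap is in the surjectivity half of step (1).

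You write that ``every formal Lagrangian immersion of a closed $L^n$ into $M$ is formally homotopic to a genuine one, and a formal homotopy yields a formal Lagrangian cobordism.'' The first clause is correct, but the second is not automatic once the trivialization datum is taken seriously. A formal Lagrangian immersion in the paper's sense is a pair $(\varphi,\tau)$ with $\tau\in\Triv(f^*\omega)$, and the forgetful map $\Glag(M)\to\Gform(M)$ sends a genuine $g$ to $(Dg,0)$ with the \emph{zero} trivialization. The h-principle produces a homotopy $H$ of Lagrangian bundle maps from $\varphi$ to $Dg$, and this does give a Lagrangian bundle map on $L\times I$; but for it to be a formal Lagrangian cobordism one needs a trivialization of $H^*\omega$ restricting to $\tau$ on $L\times\{0\}$ and to $0$ on $L\times\{1\}$. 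Such an extension exists if and only if $\flux(H;\tau,0)=0$, and there is no reason for the homotopy produced by the h-principle to have this property. The paper closes this gap with Lemma~\ref{lemma:existence_of_homotopy_of_arbitrary_flux}: for any $a\in H^1(L;\R)$ there is a Lagrangian isotopy of $g$ with flux $a$, proved via the parametric h-principle for \emph{exact} Lagrangian immersions (Theorem~\ref{thm:h-principle_exact}) applied in a Weinstein neighbourhood. Concatenating with such an isotopy corrects the flux to zero and finishes surjectivity. This is a genuine missing ingredient in your sketch, not a routine detail; note in particular that you invoke the parametric h-principle only for injectivity, whereas the paper uses only the relative (non-parametric) form there and reserves the parametric exact version precisely for this flux-correction step.

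A smaller point on injectivity: you should make explicit that the trivialization on the formal cobordism $V$ is exactly what guarantees the relative vanishing $[F^*\omega]=0\in H^2(V,\partial V;\R)$, which is the hypothesis the relative h-principle (Theorem~\ref{thm:h-principle}) actually requires. Without tracking the trivialization you would only know vanishing in $H^2(V;\R)$, which is insufficient.
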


\begin{remark}
The spectrum $\Th(- \gamma^{\omega})$
is canonically defined up to a contractible choice.
Moreover, it is natural in the sense that it
defines a functor from the category of symplectic manifolds (of a given dimension)
and symplectic maps to the stable homotopy category.
The isomorphism \eqref{eq:iso_main_thm} is
a natural isomorphism
of functors.\EndRemark
\end{remark}

\subsection{Computations}
\label{subsection:intro_computations}

As applications of the main theorem,
we compute the group $\Glag(M)$ when $M$ is a closed surface
and investigate the structure of $\Glag(M)$ when $M$ is a monotone symplectic manifold.

\subsubsection{Monotone manifolds}

\begin{definition}
\label{def:monotone_manifold}
A symplectic manifold $(M, \omega)$ is \Def{monotone} if there
is a constant $\tau \in \R$ such that $[\omega] = \tau \, c_1(M, \omega)$
as elements of $H^2(M; \R)$.
\end{definition}

Examples of monotone manifolds include exact manifolds, closed surfaces of genus $g \neq 1$
and $\C P^n$ with the Fubini-Study form.

In Section \ref{subsection:monotone}, we show that the structure
of the spectrum $\Th( -\gamma^{\omega})$
is significantly simpler in the case of monotone manifolds.
To state the result, let $\gamma \to \Grass_{\infty}M$
be the stable vector bundle over the stable Lagrangian Grassmannian $\Grass_{\infty}M$
that classifies stable Lagrangian bundle maps into $TM$ (see Section \ref{subsection:def_gamma} for the definition).
There is a canonical map $\gamma^{\omega} \to \gamma$.

\begin{proposition}
\label{prop:monotone_case}
If $(M, \omega)$ is monotone, then there is a (non-canonical)
equivalence
\begin{equation}
\label{eq:equivalence_monotone_case}
\Th(-\gamma^{\omega})  \hmtpyeq \Th(-\gamma) \Smash K(\R,1)_+.
\end{equation}
\end{proposition}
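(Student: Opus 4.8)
The plan is to show that the cohomological obstruction defining $\Grass_\infty^\omega M$ --- essentially the pullback of $[\omega]$ to the stable Lagrangian Grassmannian --- vanishes with real coefficients as soon as $(M,\omega)$ is monotone. The fibration defining $\Grass_\infty^\omega M$ then splits, and the splitting of Thom spectra \eqref{eq:equivalence_monotone_case} follows formally.

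First I would recall the construction of $\Grass_\infty^\omega M$ from Section~\ref{section:cob_group_tangential_struct}. Since a stable formal Lagrangian immersion of $L$ is a stable Lagrangian bundle map $TL \to TM$ over a map $f\colon L \to M$ together with a lift of $f$ to $E^{\omega}$, the classifying space is $\Grass_\infty^\omega M \hmtpyeq \Grass_\infty M \times_M E^{\omega}$; equivalently, writing $\pi\colon \Grass_\infty M \to M$ for the bundle projection, $\Grass_\infty^\omega M$ is the homotopy fiber of a map
\[
w\colon \Grass_\infty M \longrightarrow K(\R,2)
\]
classifying $\pi^*[\omega] \in H^2(\Grass_\infty M;\R)$, and $\gamma^\omega = p^*\gamma$ where $p\colon \Grass_\infty^\omega M \to \Grass_\infty M$ is the projection (this is the canonical map $\gamma^\omega \to \gamma$).

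The key step is to prove that $\pi^*[\omega] = 0$ in $H^2(\Grass_\infty M;\R)$ in the monotone case. By construction the tautological bundle $\gamma \to \Grass_\infty M$ is a real (stable) vector bundle, and a Lagrangian subspace $\Lambda \subset (T_xM, J)$ satisfies $\Lambda \otimes_\R \C \xrightarrow{\cong} T_xM$, $v \otimes (a+bi) \mapsto av + bJv$, since $\Lambda \cap J\Lambda = 0$; this identification is natural in $\Lambda$ and stabilizes, so $\gamma \otimes_\R \C \cong \pi^*TM$ as stable complex bundles. Hence $\pi^*c_1(M,\omega) = c_1(\gamma\otimes_\R\C)$, and since complex conjugation gives an isomorphism $\gamma\otimes_\R\C \cong \overline{\gamma\otimes_\R\C}$ and negates odd Chern classes, $c_1(\gamma\otimes_\R\C)$ is $2$-torsion, hence zero in $H^2(\Grass_\infty M;\R)$. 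Monotonicity (Definition~\ref{def:monotone_manifold}) now gives $\pi^*[\omega] = \tau\,\pi^*c_1(M,\omega) = 0$, so $w$ is nullhomotopic.

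A choice of nullhomotopy of $w$ yields a (non-canonical) equivalence $\Grass_\infty^\omega M \hmtpyeq \Grass_\infty M \times \Omega K(\R,2) = \Grass_\infty M \times K(\R,1)$ under which $p$ becomes the projection to the first factor; consequently $\gamma^\omega$, and therefore its stable inverse $-\gamma^\omega$, is identified with the pullback of $-\gamma$ along that projection. The proposition then follows from the standard identity $\Th(\mathrm{pr}^*V) \hmtpyeq \Th(V)\Smash B_+$ for a projection $\mathrm{pr}\colon X\times B \to X$ and a stable vector bundle $V$ on $X$, applied with $X = \Grass_\infty M$, $B = K(\R,1)$, $V = -\gamma$. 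I expect the only real work to be bookkeeping: checking that the identification $\gamma\otimes_\R\C \cong \pi^*TM$ and the smash-product formula for Thom spectra are valid at the level of stable (virtual) bundles as set up in Section~\ref{section:cob_group_tangential_struct}, and tracking compatibility with the canonical map $\gamma^\omega \to \gamma$; no delicate homotopy theory is needed once $\pi^*[\omega]$ is seen to vanish rationally.
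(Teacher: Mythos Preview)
Your proposal is correct and follows essentially the same route as the paper: both arguments identify $\gamma\otimes_\R\C \cong \pi^*TM$, deduce that $\pi^*c_1(M)$ vanishes in real cohomology, use monotonicity to conclude $\pi^*[\omega]=0$, and then trivialize the $K(\R,1)$-fibration $\Grass_\infty^\omega M \to \Grass_\infty M$ to obtain the smash splitting of Thom spectra. The only cosmetic difference is that the paper notes $c_1(\gamma\otimes_\R\C)=0$ integrally because $\gamma$ is oriented (so $\det_\R\gamma$ is trivial), whereas you argue it is $2$-torsion via conjugation; either suffices since only the image in $H^2(-;\R)$ matters.
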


\begin{remark}
As the proof of Proposition \ref{prop:monotone_case} will show, 
the different choices for the equivalence \eqref{eq:equivalence_monotone_case} are parametrized (up to homotopy)
by choices of cohomology classes of primitives of the pullback of $\omega$ to $\Grass_{\infty}M$.\EndRemark
\end{remark}

Combining Proposition \ref{prop:monotone_case} and Theorem \ref{thm:main_thm}, we obtain the following corollary.

\begin{corollary}
If $(M,\omega)$ is a monotone manifold of dimension $2n$, there is an isomorphism
\begin{equation}
\label{eq:cob_group_monotone_case}
\Glag(M, \omega) \iso H_n( K(\R,1) ; \Th(-\gamma) ) 
\end{equation}
\end{corollary}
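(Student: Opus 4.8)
The plan is to chain together the two results that immediately precede this corollary. First I would apply Theorem \ref{thm:main_thm}, which gives an isomorphism $\Glag(M,\omega) \iso \pi_n \Th(-\gamma^{\omega})$ valid for any symplectic manifold of dimension $2n$. Since $(M,\omega)$ is assumed to be monotone, I would then invoke Proposition \ref{prop:monotone_case} to replace the Thom spectrum $\Th(-\gamma^{\omega})$ by $\Th(-\gamma) \Smash K(\R,1)_+$, yielding
\[
\Glag(M, \omega) \iso \pi_n \bigl( \Th(-\gamma) \Smash K(\R,1)_+ \bigr).
\]
The last step is to recognize the right-hand side as a generalized homology group: for a spectrum $E$ and a space $X$, the $E$-homology of $X$ is by definition $H_n(X;E) = \pi_n(E \Smash X_+)$, so with $E = \Th(-\gamma)$ and $X = K(\R,1)$ this identifies the right-hand side with $H_n(K(\R,1); \Th(-\gamma))$, which is exactly \eqref{eq:cob_group_monotone_case}.

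No genuine obstacle arises here: the corollary is a formal consequence of the two cited statements, and the only points requiring attention are bookkeeping. Concretely, (i) the equivalence of Proposition \ref{prop:monotone_case} is non-canonical, so the isomorphism \eqref{eq:cob_group_monotone_case} is only defined up to the choices recorded in the remark following that proposition; (ii) one must check that the conventions agree, i.e. that $K(\R,1)_+$ in \eqref{eq:equivalence_monotone_case} is understood as the suspension spectrum of the pointed space obtained by adjoining a disjoint basepoint, which is precisely the convention entering the definition of $H_n(-;E)$, so that no degree shift is introduced; and (iii) smashing the spectrum $\Th(-\gamma)$ with a suspension spectrum is unproblematic and requires no multiplicative structure on $\Th(-\gamma)$.

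Finally, I would add, for use in explicit computations, the observation that the homology group in \eqref{eq:cob_group_monotone_case} is accessible through an Atiyah--Hirzebruch spectral sequence
\[
E^2_{p,q} = H_p\bigl( K(\R,1); \pi_q \Th(-\gamma) \bigr) \Longrightarrow H_{p+q}\bigl( K(\R,1); \Th(-\gamma) \bigr),
\]
whose input is the ordinary homology of $K(\R,1)$ together with the stable homotopy groups of $\Th(-\gamma)$; this is the form in which \eqref{eq:cob_group_monotone_case} is used to extract concrete information in examples such as closed surfaces and $\C P^n$.
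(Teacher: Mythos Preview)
Your proposal is correct and matches the paper's approach exactly: the paper simply states that the corollary follows by combining Theorem~\ref{thm:main_thm} with Proposition~\ref{prop:monotone_case}, and your write-up spells out precisely this chain (including the identification $\pi_n(E \Smash X_+) = H_n(X;E)$) together with the Atiyah--Hirzebruch remark that the paper also makes immediately afterward.
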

The right-hand side of Equation \eqref{eq:cob_group_monotone_case}
can be described further using the Atiyah-Hirzebruch spectral sequence;
see Proposition \ref{prop:iso_tensor_product}.

\subsubsection{Surfaces}

Let $\Sigma$ be a closed symplectic surface.
We denote by $S\Sigma$ the unit tangent bundle of $\Sigma$.
Note that $S\Sigma$ is isomorphic to the oriented Lagrangian Grassmannian of $\Sigma$.

\begin{theorem}
\label{thm:computation_surface}
Let $\Sigma$ be a closed symplectic surface of genus $g$.
Then 
\[
	\Glag(\Sigma) \iso 
	\begin{cases}
						H_1(S\Sigma;\Z) \oplus \R 		&\text{if } g \neq 1, \\
						H_1(S\Sigma;\Z) \oplus \R/\Z 	&\text{if } g = 1.
	\end{cases}
\]
\end{theorem}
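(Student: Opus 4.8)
The plan is to combine Theorem \ref{thm:main_thm} with the monotone-case analysis (Proposition \ref{prop:monotone_case} and its corollary), since a closed surface $\Sigma$ of genus $g$ is monotone: for $g \neq 1$ we have $[\omega] = \tau c_1(\Sigma)$ with $\tau \neq 0$, and for $g = 1$ both $[\omega]$ and $c_1$ vanish rationally so $\tau = 0$ works (though this degenerate case will need separate care, see below). Thus by the corollary to Proposition \ref{prop:monotone_case}, $\Glag(\Sigma) \iso H_1(K(\R,1); \Th(-\gamma))$, and the first task is to identify $\Th(-\gamma)$ in dimension $2n = 2$. Here $\Grass_{\infty}\Sigma$ is built from the bundle of stable Lagrangian Grassmannians over $\Sigma$; since the fiber $U(1)/O(1) = \Grass_1$ stabilizes to $U/O$, and in the relevant low degrees $U/O$ has the homotopy type controlling $\pi_0, \pi_1$, I expect $\Th(-\gamma)$ to split (through the relevant range) as a wedge involving the sphere spectrum smashed with $(S\Sigma)_+$ or a closely related Thom spectrum over $S\Sigma$, using that $S\Sigma$ is the oriented Lagrangian Grassmannian of $\Sigma$ as noted in the excerpt. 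Concretely, I would compute $\pi_1$ of $\Th(-\gamma) \Smash K(\R,1)_+$ via the Atiyah–Hirzebruch spectral sequence (or the splitting in Proposition \ref{prop:iso_tensor_product}).

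The key computational steps, in order: (1) Identify $\Grass_{\infty}\Sigma$ and the bundle $\gamma$ explicitly — over a surface the stable Lagrangian Grassmannian bundle has fiber $U/O$, and its total space fibers over $\Sigma$; I expect $\Grass_{\infty}\Sigma \hmtpyeq S\Sigma \times (\text{stabilized Grassmannian tail})$ after suitable identification, with the first Lagrangian-Grassmannian component recording the unit tangent bundle $S\Sigma$. (2) Compute $\pi_0 \Th(-\gamma)$ and $\pi_1 \Th(-\gamma)$: by Thom-spectrum/Atiyah–Hirzebruch arguments, $\pi_0 \Th(-\gamma) \iso \Z$ (orientation) and $\pi_1 \Th(-\gamma) \iso H_1(S\Sigma;\Z) \oplus \Z/2$ or similar, the $\Z/2$ coming from $\pi_1^s = \Z/2$ and the detour term $w_1$ of the relevant bundle; I must track this carefully. (3) Feed this into $H_1(K(\R,1); \Th(-\gamma))$: since $H_0(K(\R,1);\Z) = \Z$, $H_1(K(\R,1);\Z) = \R$, and $K(\R,1)$ has no higher rational homology but $\R$ is a $\Q$-vector space so $\widetilde H_*(K(\R,1);\Z)$ is concentrated — actually $H_*(K(\R,1);\Z)$ is the exterior algebra on $H_1 = \R$, so $H_k = \Lambda^k_\Q \R$ which is huge but the spectral sequence $E^2_{p,q} = H_p(K(\R,1); \pi_q \Th(-\gamma))$ contributes to total degree $1$ only via $E^2_{0,1} = \pi_1\Th(-\gamma)$ and $E^2_{1,0} = H_1(K(\R,1);\pi_0\Th(-\gamma)) = \R$. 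I then argue the relevant differentials and extensions: the $\R$ summand is a divisible group so it splits off, and the $\Z/2$ torsion bits must be shown to assemble into $H_1(S\Sigma;\Z)$ (using that $H_1(S\Sigma;\Z)$ already contains a $\Z/2$ or $\Z/(2-2g)$ torsion class coming from the Euler class of $\Sigma$ via the Gysin sequence for $S\Sigma \to \Sigma$).

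The main obstacle will be step (3)'s extension and differential problems, and in particular reconciling the apparent $\Z/2$ from stable homotopy with the genus-dependent torsion in $H_1(S\Sigma;\Z)$ — the Gysin sequence gives $H_1(S\Sigma;\Z) \iso \Z^{2g} \oplus \Z/(2g-2)$ for $g \neq 1$ (with the convention $\Z/0 = \Z$ handled separately) and $\Z^3$ for $g = 1$, so the framing/spin data on $S\Sigma$ interacts nontrivially with the $\pi_1^s$ contribution. I would resolve this by choosing an explicit Lagrangian immersion representing a generator (e.g. the Whitney sphere construction localized in a Darboux chart, contributing the stable-homotopy class) and explicit tori/curves detecting the $H_1(S\Sigma;\Z)$ classes, then checking the flux/Maslov invariants separate them; the $\R$ (resp. $\R/\Z$) summand is detected by the symplectic area (flux) homomorphism, which is $\R$-valued when $[\omega] = \tau c_1$ with $\tau \neq 0$ but only well-defined modulo periods $\Z$ when $g = 1$ since then the area of a torus bounding is constrained by $\int_\Sigma \omega \in \R$ being the only period — this is precisely where the $g=1$ case degenerates and produces $\R/\Z$ rather than $\R$. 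Verifying that these explicit generators exhaust the group, with no relations beyond the stated ones, will require the full force of Theorem \ref{thm:main_thm} to pin down the count.
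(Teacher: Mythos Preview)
Your plan contains a factual error that breaks the $g=1$ case: the torus is \emph{not} monotone. For a closed symplectic surface one has $\int_{\Sigma}\omega>0$, so $[\omega]\neq 0$ in $H^2(T^2;\R)$, while $c_1(T^2)=0$; there is therefore no $\tau$ with $[\omega]=\tau\,c_1$. Your claim that ``$[\omega]$ and $c_1$ vanish rationally'' is simply false for $[\omega]$. Consequently Proposition~\ref{prop:monotone_case} and its corollary do not apply to $T^2$, and the splitting $\Th(-\gamma^{\omega})\simeq \Th(-\gamma)\wedge K(\R,1)_+$ is not available there. Your later remark about ``separate care'' does not repair this: you never propose an alternative mechanism for the genus-one case, and the $\R/\Z$ you are aiming for is precisely the signature of the non-trivial periods that arise when the fibration $\Grass_{\infty}^{\omega}\to\Grass_{\infty}$ fails to trivialize.

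The paper avoids monotonicity entirely in the surface computation. It first proves a general $\pi_1$ result (Proposition~\ref{prop:computation_pi1}): for any stable bundle $\xi:B\to BSO$ whose induced map $H_2(B;\Z)\to H_2(BSO;\Z)$ is nonzero, the Hurewicz--Thom map $\pi_1\Th(\xi)\to H_1(B;\Z)$ is an isomorphism. Applied to $-\gamma^{\omega}$, this gives $\Glag(\Sigma)\cong H_1(\Grass_{\infty}^{\omega};\Z)$ directly, with no spurious $\Z/2$ from $\pi_1^s$ --- your worry about reconciling that $\Z/2$ with the Euler-class torsion in $H_1(S\Sigma;\Z)$ is therefore based on a wrong expectation. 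The paper then runs the Serre spectral sequence for the $K(\R,1)$-fibration $\Grass_{\infty}^{\omega}\to\Grass_{\infty}$, obtaining $H_1(\Grass_{\infty}^{\omega};\Z)\cong H_1(\Grass_{\infty};\Z)\oplus \R/\Per$, where $\Per\subset\R$ is the image of $\pi^*[\omega]$ on $H_2(\Grass_{\infty};\Z)$. For $g\neq 1$ monotonicity gives $\Per=0$; for $g=1$ the triviality of $TT^2$ gives $\Per=A\Z$. Finally $H_1(\Grass_{\infty};\Z)\cong H_1(S\Sigma;\Z)$ by the $\pi_1$-stability of Corollary~\ref{cor:inclusion_in_colimit_is_k_connected}. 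For $g\neq 1$ your Atiyah--Hirzebruch route via Proposition~\ref{prop:iso_tensor_product} would eventually reach the same answer, provided you replace your guess for $\pi_1\Th(-\gamma)$ by the correct value $H_1(S\Sigma;\Z)$; but the uniform periods computation is both cleaner and the only option you have for the torus.
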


\begin{remark}
In the case of surfaces of genus $g \geq 2$, the group $\Glag(\Sigma)$ was computed
in \cite{DRF23}, using previous work of Perrier \cite{Perrier19}.
These computations are based on geometric methods; the present work provides a homotopy-theoretic
proof of this result.\EndRemark
\end{remark}

\subsection{Outline of the proof and organization of the paper}

The isomorphism \eqref{eq:iso_main_thm} of Theorem \ref{thm:main_thm} 
is obtained in three steps, which are illustrated in the following diagram:
\begin{equation}
\label{cd:outline_of_proof}
\begin{tikzcd}[column sep = large, labels = { inner sep = 1ex }]
\Glag(M) \arrow{r}{\iso}[swap]{\text{(i)}}
&\Gform(M) \arrow{r}{\iso}[swap]{\text{(ii)}}
&\Omega_n(\gamma^{\omega}) \arrow{r}{\iso}[swap]{\text{(iii)}}
&\pi_n \Th(-\gamma^{\omega}).
\end{tikzcd}
\end{equation}
We give a brief outline of each step in Diagram \eqref{cd:outline_of_proof}.
Step (i) is carried out in Section \ref{section:formal_cob_group}, where
it is shown that $\Glag(M)$ is isomorphic to 
a cobordism group $\Gform(M)$ of formal Lagrangian immersions. The proof is an application
of the h-principle for Lagrangian immersions.
The stable vector bundle $\gamma^{\omega}$ which classifies stable formal Lagrangian immersions $L \to M$
is constructed in Section \ref{section:cob_group_tangential_struct}.
The isomorphism (ii) is then obtained in Section \ref{section:proof_main_thm}
by a stabilization argument.
Finally, the isomorphism (iii) is obtained in Section \ref{section:proof_main_thm} by an application of the Pontrjagin-Thom Theorem.

The rest of this paper is organized as follows.
Section \ref{section:computations} contains the computations whose results are stated in Section \ref{subsection:intro_computations}.
Section \ref{section:prelim} collects some preliminaries related to Lagrangian immersions
and classical cobordism theory.
The appendix contains the construction,
for a manifold $M$ equipped with a closed form $\omega$,
of a fibration $E^{\omega} \to M$
that represents the homotopy fiber of a map $M \to K(\R,q)$ classifying $[\omega] \in H^q(M; \R)$.
The main feature of this construction is that it is functorial as a function of the pair $(M, \omega)$,
which in turn allows for a functorial definition of the spectrum $\Th(-\gamma^{\omega})$ of Theorem \ref{thm:main_thm}.

\subsection{Acknowledgements}

This work is part of the author's PhD thesis at the University of Montreal under the supervision of Octav Cornea and François Lalonde.
The author would like to thank Octav Cornea for his encouragements and guidance.
The author would also like to thank Jordan Payette for helpful discussions about h-principles.
The author acknowledges the financial support of NSERC Grant \#{}504711 and FRQNT Grant \#{}300576.

\section{Preliminaries}
\label{section:prelim}

\subsection{Cobordism groups of Lagrangian immersions}

Let $(M, \omega)$ be a symplectic manifold of dimension $2n$.

\begin{definition}
An immersion $f: L \to M$ of a manifold $L$ of dimension $n$ is \Def{Lagrangian} if $f^*\omega = 0$.
\end{definition}

We recall the definition of a Lagrangian cobordism between two Lagrangian immersions,
which is due to Arnold \cite{Arnold}.
Let $f_0 : L_0 \to M$
and $f_1: L_1 \to M$
be Lagrangian immersions of (possibly disconnected) closed manifolds into $M$.
\begin{definition}
A \Def{Lagrangian cobordism} from $f_0$ to $f_1$ consists
of a compact manifold $V$ with boundary $\bdry V = L_0 \sqcup L_1$, 
along with
a Lagrangian immersion $F: V \to T^*[0,1] \times M$ satisfying the following assumptions:
\begin{itemize}
	\item $F^{-1}( T^*_k[0,1] \times M) = L_k$ for $k=0,1$.
	\item There is a collar embedding $j_0: L_0 \times [0, \eps) \to V$
	such that $F \circ j_0(x,t) = (t, 0 , f_0(x))$. 
	Similarly, there is a collar embedding $j_1: L_1 \times (1- \eps, 1] \to V$
	such that $F \circ j_1(x,t) = (t, 0 , f_1(x))$.
\end{itemize}
\end{definition}

One obtains variations on this definition by requiring that the manifolds be equipped
with extra structures.
In this work, we will only consider the \emph{oriented} theory, i.e. we shall assume
that the manifolds $L_i$ are oriented and that $V$ is an oriented cobordism.
The unoriented case can be treated by similar methods.

Lagrangian cobordism defines an equivalence relation on Lagrangian immersions.
We let $\Glag(M)$ be the associated Lagrangian cobordism group,
where the group operation is disjoint union.
Note that the inverse in $\Glag(M)$
corresponds to reversing the orientation.

If $f: (M, \omega) \to (M', \omega')$ is a symplectic map between manifolds of the same dimension,
then there is an induced pushforward morphism $f_*: \Glag(M) \to \Glag(M')$.
In this way, the Lagrangian cobordism group defines a functor
on the category of symplectic manifolds of a given dimension.

\subsection{The h-principle for Lagrangian immersions}
\label{subsection:h-principle}

Let $L$ be a manifold.
A map of vector bundles $\vphi: TL \to TM$
is \Def{Lagrangian} if $\vphi(T_x L)$ is a Lagrangian subspace for every $x \in L$.
Equivalently, a Lagrangian bundle map
is a section of the differential relation $\RLag \subset J^1(L,M)$
of Lagrangian immersions, where $J^1(L,M)$ denotes the 1st order jet space.\footnote{Sections of $\RLag$ are often called \emph{formal Lagrangian immersions}, but we shall reserve this name for the objects introduced in Definition \ref{def:formal_Lag_immersion} below, which
carry the additional datum of a trivialization of the pullback of $\omega$.}
A Lagrangian bundle map is \Def{holonomic} if it is equal to $Df:TL \to TM$ for some Lagrangian immersion $f: L \to M$.

We shall need the following form of the h-principle for Lagrangian immersions, due to Gromov \cite{Gromov-icm,Gromov-pdr} and Lees \cite{Lees}.
See also \cite{Eliashberg-Mishachev} for a textbook account.

\begin{theorem}
\label{thm:h-principle}
Let $\vphi: TL \to TM$ be a Lagrangian bundle map covering a map $f: L \to M$.
If $[f^*\omega] = 0 \in H^2(L; \R)$,
then $\vphi$ is homotopic to a holonomic solution.
Moreover, 
if $\vphi$ is holonomic on a neighborhood of a closed subset $A \subset L$,
then $\vphi$ is homotopic rel. $A$ to a holonomic solution
provided that $[f^*\omega] = 0 \in H^2(L, A; \R)$.
\end{theorem}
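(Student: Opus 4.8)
The statement is the classical h-principle of Gromov and Lees, and my plan is to obtain it from a local model via the standard local-to-global machinery for differential relations, with the de~Rham class $[f^*\omega]$ emerging as the only obstruction. I would phrase everything in terms of the relation $\RLag\subset J^1(L,M)$: formal solutions are its sections, Lagrangian immersions are its holonomic sections, and Theorem~\ref{thm:h-principle} is exactly the $0$-parametric relative form of the h-principle for $\RLag$. The two structural facts that drive the argument are that $\RLag$ is microflexible and that it is invariant, in the ``sharply moving'' sense of Gromov, under the action of $\mathrm{Diff}(L)$ and of the symplectomorphisms of $M$. By Gromov's local-to-global machinery for such relations, the h-principle then reduces to two things: the h-principle over a single handle rel.\ a face, which by Darboux's theorem is the h-principle for Lagrangian immersions of a disk $D^n$ into $(\R^{2n},\omega_{\mathrm{std}}) = \C^n$ rel.\ prescribed holonomic data on a face; and the patching of the handle-by-handle solutions coherently over $L$, which is where the cohomological hypothesis intervenes.

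For the local model I would use Lees' generating-form description: a Lagrangian immersion $g\colon D^n\to\C^n$ transverse to the fibers of a Lagrangian fibration $\C^n = T^*\R^n\to\R^n$ is encoded by a $1$-form along $D^n$, and the equation $g^*\omega_{\mathrm{std}} = 0$ becomes the closedness of that form; so, after a $C^0$-small perturbation making the underlying map transverse to the fibration (always possible over a small disk), Lagrangian immersions correspond to closed $1$-forms. Since the relevant chart of the Lagrangian Grassmannian and the ambient $\C^n$ are contractible, one can then write the required holonomic solution down directly, matching the face data; the parametric and relative refinements, including the cases where the face data winds around the Lagrangian Grassmannian, are handled by the convex-integration and holonomic-approximation techniques of \cite{Gromov-pdr, Eliashberg-Mishachev}. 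It is here that the hypothesis enters: the symplectic defect $g^*\omega$ of an intermediate immersion is a closed $2$-form, locally exact, and the flexibility of $\RLag$ lets one cancel it chart by chart, but the obstruction to performing these cancellations coherently over all of $L$ — that is, to assembling the local Lagrangian pieces into a global Lagrangian immersion $g$ with $Dg$ homotopic to $\vphi$ — is the class $[f^*\omega]\in H^2(L;\R)$, a global primitive $\beta$ with $f^*\omega = d\beta$ being precisely what coordinates the local choices. For the relative statement one runs the handle induction starting from a neighbourhood of $A$, where $\vphi$ is already holonomic and $f^*\omega$ vanishes identically, so $f^*\omega$ defines a class in $H^2(L,A;\R)$ and one needs $\beta$ to vanish near $A$, which is possible exactly when that relative class vanishes. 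Necessity is immediate, since a genuine Lagrangian immersion $g$ homotopic to $f$ has $[f^*\omega] = [g^*\omega] = 0$.

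I expect the real obstacle to be the local model over a disk when the face data is topologically nontrivial along the Lagrangian Grassmannian — this is the substance of Gromov's and Lees' arguments and of the flexibility results in \cite{Eliashberg-Mishachev} — together with the bookkeeping that identifies the global obstruction as the single class $[f^*\omega]$ rather than some finer invariant. The microflexibility and invariance of $\RLag$, the reduction to the local model via Darboux's theorem, and the passage to the parametric and relative versions are, by contrast, formal once this scheme is in place.
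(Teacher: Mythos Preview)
The paper does not prove this theorem: it is stated in Section~\ref{subsection:h-principle} as a known result, attributed to Gromov \cite{Gromov-icm,Gromov-pdr} and Lees \cite{Lees}, with \cite{Eliashberg-Mishachev} given as a textbook reference. There is therefore nothing to compare your proposal against; your sketch is a reasonable outline of the classical argument, but for the purposes of this paper a citation is all that is required.
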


We shall also need the following special case of the parametric h-principle for exact Lagrangian immersions.
Let $(M, \omega)$ be an exact symplectic manifold.
Given a $1$-form $\lambda$ with $d \lambda= \omega$, a Lagrangian immersion $f:L \to M$
is called \Def{$\lambda$-exact} if $f^*\lambda$ is exact.

\begin{theorem}[{\cite[16.3.1]{Eliashberg-Mishachev}}]
\label{thm:h-principle_exact}
Let $(M, \omega)$ be an exact symplectic manifold and $\lambda_t$
be a smooth path of $1$-forms with $d \lambda_t = \omega$ for all $t \in [0,1]$.
Let $f_t: L \to M$ be a path of Lagrangian immersions
such that $f_0$ is $\lambda_0$-exact.
Then there exists a path of Lagrangian immersions $(g_t)$
such that $g_0 = f_0$ and $g_t$ is $\lambda_t$-exact for all $t \in [0,1]$.
\end{theorem}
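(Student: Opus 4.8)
The statement to prove is Theorem~\ref{thm:h-principle_exact}, the parametric h-principle for exact Lagrangian immersions with a moving primitive. The plan is to reduce the assertion to the parametric h-principle for Lagrangian immersions relative to a family of ambient data, and then to package the "$\lambda_t$-exactness" condition as an additional open Diff-invariant differential relation to which the h-principle machinery of \cite{Eliashberg-Mishachev} applies. The key observation is that, for a fixed symplectic form $\omega = d\lambda$, the space of $\lambda$-exact Lagrangian immersions $f: L \to M$ sits inside the space of all Lagrangian immersions as the subset where the closed $1$-form $f^*\lambda$ represents $0 \in H^1(L;\R)$; since $[f^*\lambda] \in H^1(L;\R)$ depends only on the formal homotopy class of $f$ (via the induced bundle map, once one remembers that $[f^*\lambda]$ can be computed from $Df$ and the $1$-jet data), the condition is formal in nature and compatible with the h-principle. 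More precisely, changing $\lambda$ to $\lambda_t = \lambda + \mu_t$ with $d\mu_t = 0$ (which is forced, since $d\lambda_t = \omega$ for all $t$) shifts the cohomology class $[f^*\lambda_t] = [f^*\lambda] + [f^*\mu_t]$, so the condition "$f_t$ is $\lambda_t$-exact" is the condition that a certain path in $H^1(L;\R)$ stays at the origin.

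First I would set up the formal picture. Let $\mu_t = \lambda_t - \lambda_0$, a smooth path of closed $1$-forms on $M$ with $\mu_0 = 0$. Given the path $f_t$ of Lagrangian immersions, consider the path of cohomology classes $c(t) := [f_t^* \lambda_t] \in H^1(L;\R)$. By hypothesis $c(0) = 0$. The goal is to modify $f_t$, rel $t = 0$, to a path $g_t$ with $[g_t^*\lambda_t] \equiv 0$ while keeping each $g_t$ Lagrangian. The natural strategy is a two-step one: (1) first use an ambient isotopy or a direct homotopy to correct the cohomology class, reducing to the case where $c(t) \equiv 0$ as cohomology classes but $f_t^*\lambda_t$ is merely closed (not exact); (2) then appeal to the non-parametric observation that a Lagrangian immersion $f$ with $[f^*\lambda] = 0$ can be made $\lambda$-exact by a compactly-supported $C^0$-small Hamiltonian-type perturbation — in fact $\lambda$-exactness of a Lagrangian immersion depends only on the cohomology class $[f^*\lambda]$ together with choosing the right primitive, and can always be arranged within the same Lagrangian regular homotopy class once the class vanishes. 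Step (2) run parametrically is exactly the content of the relevant special case in \cite[16.3.1]{Eliashberg-Mishachev}, so the real work is step (1).

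For step (1) I would argue as follows. Pick a smooth family of closed $1$-forms $\alpha_t$ on $L$ with $[\alpha_t] = -c(t)$ and $\alpha_0 = 0$ (possible since $c$ is a smooth path in the finite-dimensional vector space $H^1(L;\R)$ starting at $0$, so lift it to a smooth path of forms). Then $f_t^*\lambda_t + \alpha_t$ is a closed $1$-form with vanishing cohomology class, i.e.\ exact. The point is to realize $\alpha_t$ geometrically: one wants a path of Lagrangian immersions $h_t$, formally homotopic to $f_t$, with $h_0 = f_0$ and $h_t^*\lambda_t = f_t^*\lambda_t + \alpha_t$ up to exact forms. This is where one uses that the map "Lagrangian immersion $\mapsto$ cohomology class of the pulled-back primitive" is a fibration-like map onto (a coset in) $H^1(L;\R)$ whose fibers are the $\lambda_t$-exact ones; surjectivity of this map onto the relevant affine subspace, parametrically, is again a soft consequence of the h-principle for Lagrangian immersions (Theorem~\ref{thm:h-principle}), since one can prescribe the class $[f^*\lambda]$ by a formal construction and then apply the h-principle to make it holonomic. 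I would therefore phrase step (1) as: apply the parametric h-principle to the pair (Lagrangian bundle map, prescribed primitive class), which is legitimate because prescribing the primitive class is an open, Diff-invariant condition on the $1$-jet once $\omega$ is exact.

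The main obstacle, and the step I would spend the most care on, is making precise the claim that the cohomology class $[f^*\lambda]$ of the pulled-back primitive is a \emph{formal} invariant — that is, computable from the $1$-jet data of $f$ and stable under formal homotopies — so that the h-principle can genuinely be applied to control it. The subtlety is that $f^*\lambda$ itself is not determined by $Df$ alone (it involves $f$, not just its differential), but its cohomology class \emph{is} determined by the homotopy class of $f$ as a map $L \to M$ together with $[\lambda] \in H^1(M;\R)$ — indeed $[f^*\lambda] = f^*[\lambda]$ — which is constant along any homotopy of $f$ through maps, in particular along any regular homotopy. This means $c(t) = f_t^*[\lambda_t] = f_t^*[\lambda_0] + f_t^*[\mu_t]$, and since $L$ is connected to $L$ and the $f_t$ are all homotopic, $f_t^*[\lambda_0] = f_0^*[\lambda_0] = 0$; hence $c(t) = f_0^*[\mu_t]$ depends only on $t$ and not on the choice within the homotopy class. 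This simplification — $c(t)$ is intrinsic — is what makes step (1) tractable: rather than correcting an arbitrary path, I only need to cancel the fixed path $f_0^*[\mu_t]$, which can be done by a path of symplectic (in fact Hamiltonian, since the correction is exact after pullback) ambient diffeomorphisms of $M$ realizing the change of primitive, or directly by invoking \cite[16.3.1]{Eliashberg-Mishachev} in the stated form. In the write-up I would thus first establish this intrinsic formula for $c(t)$, reduce to cancelling it, and then cite the parametric exact h-principle; the delicate point to get right is the compatibility of the orientation/Diff-invariance hypotheses needed to invoke the textbook result, and the observation that $f_0$ being $\lambda_0$-exact is precisely the initial condition $c(0) = 0$ that lets the parametric argument start.
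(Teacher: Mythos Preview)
The paper does not prove this theorem at all: it is stated as a citation from \cite[16.3.1]{Eliashberg-Mishachev} and used as a black box (see Section~\ref{subsection:h-principle}). There is therefore no ``paper's own proof'' to compare your proposal against.

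That said, your attempt contains a genuine error at the step you flag as crucial. You write that ``$[f^*\lambda] = f^*[\lambda]$'' and conclude that this class is ``constant along any homotopy of $f$ through maps.'' But $\lambda$ is \emph{not} closed --- we have $d\lambda = \omega \neq 0$ --- so $[\lambda] \in H^1(M;\R)$ is undefined, and the identity you invoke is meaningless. What is true is that $f^*\lambda$ is closed \emph{when $f$ is Lagrangian}, since then $d(f^*\lambda) = f^*\omega = 0$; but the class $[f^*\lambda]$ genuinely depends on $f$ within its Lagrangian regular homotopy class, not just on the homotopy class of the underlying map. Indeed, the variation of $[f_t^*\lambda]$ along a Lagrangian isotopy is precisely the flux, which is typically nonzero --- this is exactly what Lemma~\ref{lemma:existence_of_homotopy_of_arbitrary_flux} in the paper exploits. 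Your ``intrinsic formula'' $c(t) = f_0^*[\mu_t]$ therefore fails, and with it the reduction in step~(1).

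The actual proof in \cite{Eliashberg-Mishachev} proceeds differently: it treats exact Lagrangian immersions as isotropic immersions into the contact manifold $M \times \R$ (with contact form $dz - \lambda$) and applies the h-principle for isotropic immersions there, which is a parametric statement from the outset. Your instinct that exactness should be encoded as extra structure amenable to an h-principle is correct, but the right encoding is a lift to a contactization (or equivalently the choice of a primitive function), not a constraint on a cohomology class that you hope is formal.
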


\subsection{Cobordism of manifolds with stable tangential structures}
\label{subsection:classical_cobordism}

In this section, we recall some well-known notions from classical cobordism theory.
The original reference for this material is the work of Lashof \cite{Lashof}; see also Stong \cite{Stong} for
a detailed account. 

\subsubsection{Stable vector bundles and Thom spectra}

We shall consider cobordism theories of manifolds equipped
with a certain type of \emph{stable tangential structure},
which is classified by
a stable vector bundle.
For our purposes, it will be convenient to think of stable vector bundles 
in terms of classifying maps. Moreover, we shall only consider oriented bundles.
Therefore, we define a stable vector bundle
$\xi$ over a space $B$ to be a map $\xi: B \to BSO$,
where $BSO = \colim_n BSO(n)$.
The category of stable vector bundles is the category of spaces over $BSO$.
Unless specified otherwise, we will assume
that $\xi$ is a Serre fibration.\footnote{This means that we restrict to fibrant objects
of the Quillen model structure of the category of spaces over $BSO$.}

A stable vector bundle has an associated Thom spectrum defined as follows.
For each $r \geq 1$, form the following pullback diagram:
\[
\begin{tikzcd}
B_r \arrow{r}\arrow{d}[swap]{\xi_r} \arrow[dr, phantom, "\pb"]	&B \arrow{d}{\xi} \\
BSO(r) \arrow{r}[swap]{j_r}										&BSO
\end{tikzcd}
\]
where $j_r: BSO(r) \to BSO$ is the inclusion. 
We then have induced pullback diagrams
\begin{equation}
\label{eq:stab_maps_bases}
\begin{tikzcd}
B_r \arrow{r}{i_r}\arrow{d}[swap]{\xi_r} \arrow[dr, phantom, "\pb"]		&B_{r+1} \arrow{d}{\xi_{r+1}} \\
BSO(r) \arrow{r}														&BSO(r+1).
\end{tikzcd}
\end{equation}
Let $\gamma_r \to B_r$ be
the pullback of the universal vector bundle over $BSO(r)$.
Then there is a map of vector bundles induced by \eqref{eq:stab_maps_bases}:
\begin{equation}
\label{eq:stab_maps_bundles}
\begin{tikzcd}
\R \oplus \gamma_r \arrow{r}\arrow{d}	&\gamma_{r+1} \arrow{d} \\
B_r \arrow{r}{i_r}						&B_{r+1}
\end{tikzcd}
\end{equation}
which is fiberwise an orientation-preserving isomorphism.
We define $\Th(\xi)$ to be the Thom spectrum whose $r$-th space 
is the Thom space $\Th(\gamma_r)$
and whose structure maps $\Sus\Th( \gamma_r ) \to \Th(\gamma_{r+1})$
are induced by Diagram \eqref{eq:stab_maps_bundles}.

\subsubsection{Cobordism of manifolds with $\xi$-structures}

Let $\xi: B \to BSO$ be a stable vector bundle.
If $L$ is an oriented smooth manifold, 
then a tangential $\xi$-structure on $L$
consists of a choice of classifying map $\kappa: L \to BSO$ for the stable tangent bundle of $L$ and 
a vertical homotopy class of lifts of $\kappa$ in the following diagram:
\[
\begin{tikzcd}
													&B \arrow{d}{\xi} \\
L \arrow{r}[swap]{\kappa} \arrow[dashed]{ur}{\ell}	&BSO
\end{tikzcd}
\]
Two such structures are considered equivalent if they are concordant,
in the sense that there is a $\xi$-structure
on $L \times [0,1]$
that restricts to the given structures over the boundary.
Normal $\xi$-structures on $L$
are defined in a similar way by considering classifying maps for the stable normal bundle $\nu_L$.

One can then define a cobordism
theory of manifolds equipped with tangential or normal $\xi$-structures;
see \cite{Lashof, Stong}.
We denote the associated cobordism groups by $\Omega_n(\xi)$ for the tangential version,
and by $\Omega_n^{\nu}(\xi)$ for the normal version.

We shall need the following version of the Thom-Pontrjagin Theorem 
for stable vector bundles, originally due to Lashof \cite{Lashof}.
\begin{theorem}[\cite{Lashof}]
\label{thm:PT}
For a stable vector bundle $\xi$,
there is an isomorphism
\[
\Omega_n^{\nu}(\xi) \iso \pi_n \Th(\xi).
\]
\end{theorem}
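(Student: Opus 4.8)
The proof is the classical Pontrjagin--Thom argument, carried out with the parametrizing bundle $\xi: B \to BSO$ left general; the plan is to produce a pair of mutually inverse maps between $\Omega_n^{\nu}(\xi)$ and $\pi_n \Th(\xi)$.

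First I would construct the collapse map $\Omega_n^{\nu}(\xi) \to \pi_n \Th(\xi)$. Given a closed oriented $n$-manifold $L$ carrying a normal $\xi$-structure, choose an embedding $L \hookrightarrow \R^{n+r}$ with $r$ large; the actual normal bundle $\nu$ is then an honest oriented rank-$r$ bundle, and for $r$ large the $\xi$-structure lifts its classifying map to a map $L \to B_r = B \times_{BSO} BSO(r)$, i.e.\ to a bundle map $\nu \to \gamma_r$. Applying the Thom-space functor and precomposing with the Pontrjagin--Thom collapse $S^{n+r} \to \Th(\nu)$ of a closed tubular neighbourhood yields a class in $\pi_{n+r}\Th(\gamma_r)$; passing to the colimit over $r$ gives the desired element of $\pi_n \Th(\xi)$. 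I would then run the standard independence checks: for $r$ large any two embeddings are isotopic and tubular neighbourhoods are unique up to isotopy, so the homotopy class does not depend on these choices; the structure maps \eqref{eq:stab_maps_bundles} of $\Th(\xi)$ are precisely what makes the construction compatible with the inclusion $\R^{n+r} \subset \R^{n+r+1}$, since enlarging the embedding dimension replaces $\nu$ by $\R \oplus \nu$ and the collapse map by its suspension; a normal $\xi$-cobordism $W$ embeds in $\R^{n+r} \times [0,1]$ rel boundary, and its collapse map provides the homotopy witnessing that cobordant manifolds give the same class; and pushing the components of a disjoint union into disjoint Euclidean balls shows the assignment is a group homomorphism.

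For the inverse map $\pi_n \Th(\xi) \to \Omega_n^{\nu}(\xi)$ I would use transversality. Represent a class by $\alpha: S^{n+r} \to \Th(\gamma_r)$ and let $E$ be the total space of $\gamma_r$, which sits in $\Th(\gamma_r)$ as the complement of the basepoint; then $W := \alpha^{-1}(E)$ is open in $S^{n+r}$, hence a smooth manifold. Write $g: W \to B_r$ for $\alpha|_W$ composed with the bundle projection $E \to B_r$; then $\alpha|_W$ is, equivalently, a section $\sigma$ of the pulled-back bundle $g^*\gamma_r \to W$. After a homotopy I may assume $\sigma$ is smooth and transverse to the zero section, and then $L := \sigma^{-1}(0) \subset S^{n+r}$ is a closed $n$-manifold whose normal bundle is $g^*\gamma_r|_L$ and whose induced map $L \to B_r \to B$ is a normal $\xi$-structure. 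Applying the same recipe to a homotopy $S^{n+r} \times [0,1] \to \Th(\gamma_r)$ shows the cobordism class of the outcome is independent of the representative and of the transverse perturbation, and the identification of $\Th(\gamma_{r+1})$ with (essentially) the suspension of $\Th(\gamma_r)$ makes the construction compatible with stabilization, so one obtains a well-defined map on $\pi_n \Th(\xi)$.

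The step requiring the most care is exactly this transversality: because $B$ is a general space, the total space $E$ and the base $B_r$ need not be manifolds, so one cannot naively perturb $\alpha$ to be transverse to a submanifold. The fix, as above, is to recast the problem as transversality to the zero section for a \emph{section of a vector bundle over the open manifold $W$}, which is legitimate because vector bundles over smooth manifolds are smoothable and transversality for smooth sections is an open dense condition. Granting this, it remains to check that the two maps are mutually inverse, which follows the classical template: transversality applied to a Pontrjagin--Thom collapse map recovers the original $L$ together with its $\xi$-structure, since that collapse map is already transverse to the zero section with preimage $L$; and conversely the collapse map of a tubular neighbourhood of $L = \sigma^{-1}(0)$ is homotopic to $\alpha$, because $\alpha$ carries the complement of that neighbourhood into the basepoint up to homotopy. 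This yields the isomorphism, which is moreover natural in $n$ and in $\xi$, and in effect reduces the statement to the classical Thom--Pontrjagin theorem.
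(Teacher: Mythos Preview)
Your proposal is a correct outline of the classical Pontrjagin--Thom argument, and you have identified and handled the one genuine subtlety in the general setting, namely that $B$ need not be a manifold so transversality must be phrased for sections of a vector bundle over the open preimage $W$ rather than for maps to $B_r$.

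That said, the paper does not actually prove this theorem: it is stated as a result of Lashof with a citation to \cite{Lashof} and no argument is given. So there is no ``paper's own proof'' to compare against. Your write-up is essentially the argument one finds in Lashof and in Stong, and would be appropriate if the paper wanted to be self-contained; as it stands the paper simply imports the result.
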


The cobordism groups that appear naturally in this work are
cobordism groups of tangential structures.
To apply Theorem \ref{thm:PT} to this case, we 
shall make use of the following duality between tangential and normal structures.

A stable vector bundle $\xi$
has a stable inverse $-\xi$ defined as follows.
Consider the map $\iota: BSO \to BSO$ that is
induced in the colimit by the maps
\[
\Gr_n( \R^k) \to \Gr_{k-n}( \R^k)
\]
which send a subspace to its orthogonal.
We then define $-\xi: B \to BSO$ as the pullback fibration $\iota^* \xi$.
For any manifold $L$, there is a canonical bijection between
tangential $\xi$-structures on $L$
and normal $(-\xi)$-structures on $L$.
This bijection induces an isomorphism $\Omega_n(\xi) \iso \Omega^{\nu}_n(-\xi)$.
This yields the following reformulation of Theorem \ref{thm:PT}
in terms of cobordism groups of tangential structures.

\begin{theorem}
\label{thm:PT_tangential}
For a stable vector bundle $\xi$,
there is an isomorphism
\[
\Omega_n(\xi) \iso \pi_n \Th(-\xi).
\]
\end{theorem}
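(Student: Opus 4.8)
The plan is to deduce Theorem~\ref{thm:PT_tangential} from Theorem~\ref{thm:PT} (the normal Pontrjagin--Thom isomorphism) by making precise the duality between tangential $\xi$-structures and normal $(-\xi)$-structures that was sketched in the preceding paragraph. Concretely, it suffices to establish a natural bijection between tangential $\xi$-structures on a manifold $L$ and normal $(-\xi)$-structures on $L$, compatible with concordance and with disjoint union, since this then furnishes an isomorphism of cobordism groups $\Omega_n(\xi) \iso \Omega_n^{\nu}(-\xi)$; composing with Theorem~\ref{thm:PT} applied to $-\xi$, and using that $-(-\xi) \hmtpyeq \xi$ because $\iota \circ \iota \hmtpyeq \id_{BSO}$, gives the desired isomorphism $\Omega_n(\xi) \iso \pi_n \Th(-\xi)$.

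First I would recall that for a closed manifold $L$ the stable tangent bundle $TL$ and the stable normal bundle $\nu_L$ are stable inverses: if $\kappa_T: L \to BSO$ classifies $TL$ then $\iota \circ \kappa_T$ classifies $\nu_L$ (up to a contractible choice of homotopy), and this is witnessed by an embedding $L \hookrightarrow \R^N$ together with the tubular neighborhood, which provides a canonical fiberwise-orthogonal-complement identification. Thus the square
\[
\begin{tikzcd}
L \arrow{r}{\kappa_T} \arrow[swap]{d}{=} & BSO \arrow{d}{\iota} \\
L \arrow{r}[swap]{\kappa_{\nu}} & BSO
\end{tikzcd}
\]
commutes up to a preferred homotopy. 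Next, since $-\xi$ is by definition the pullback $\iota^*\xi$, a lift of $\kappa_\nu$ along $-\xi$ is, by the universal property of the pullback, exactly the data of a lift of $\kappa_T$ along $\xi$: the commuting square above identifies the space of lifts $\Lifts(\kappa_T, \xi)$ with the space of lifts $\Lifts(\kappa_\nu, -\xi)$, hence identifies the sets of vertical homotopy classes. Performing this construction on $L \times [0,1]$ shows the bijection is compatible with concordance, and it is manifestly compatible with disjoint unions and with orientation reversal (which realizes the group inverse), so it descends to a group isomorphism $\Omega_n(\xi) \iso \Omega_n^{\nu}(-\xi)$.

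The step requiring the most care is the bookkeeping of the "contractible choices" — i.e. verifying that the homotopy making the square commute, the choice of embedding $L \hookrightarrow \R^N$, and the passage to the colimit over $N$ all assemble coherently so that the resulting bijection on homotopy classes of lifts is genuinely well-defined and natural, rather than depending on auxiliary choices. This is the standard subtlety in setting up $(B,f)$-cobordism theories, and I expect the cleanest route is to observe that the relevant spaces of choices (embeddings into large Euclidean space, tubular neighborhoods, nullhomotopies of the composite $L \to BSO \to BSO$ measuring the discrepancy) are all contractible or at least have contractible components in the relevant range, so that the induced map on $\pi_0$ of lift-spaces is canonical. Once this is in place, everything else is formal, and the theorem follows by citing Theorem~\ref{thm:PT} for the bundle $-\xi$.
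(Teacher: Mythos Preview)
Your proposal is correct and follows exactly the approach the paper takes: the paper does not give a separate proof of this theorem but simply states it as a reformulation of Theorem~\ref{thm:PT} via the tangential/normal duality $\Omega_n(\xi) \iso \Omega_n^{\nu}(-\xi)$ described in the preceding paragraph, which is precisely what you spell out in more detail. One minor remark: the clause ``and using that $-(-\xi) \hmtpyeq \xi$'' is unnecessary, since applying Theorem~\ref{thm:PT} directly to the bundle $-\xi$ already yields $\Omega_n^{\nu}(-\xi) \iso \pi_n \Th(-\xi)$ without any double negation.
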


\subsection{Stable vector bundles from sequences of vector bundles}
\label{prelim:construction_stable_bundle}

The stable vector bundles considered in this work will be constructed
by the following procedure.
Suppose that we are given a sequence of oriented vector bundles $\gamma_r \to B_r$
of rank $r$, along with stabilization maps $\R \oplus \gamma_r \to \gamma_{r+1}$ covering maps $i_r: B_r \to B_{r+1}$
as in Diagram \ref{eq:stab_maps_bundles}.
We assume that the maps $i_r$ are cofibrations.
Define $B = \colim_r B_r$.

Furthermore, suppose that we have chosen classifying maps
\[
\begin{tikzcd}
\gamma_r \arrow{d} \arrow{r}	& U_r \arrow{d} \\
B_r \arrow{r}					& BSO(r)
\end{tikzcd}
\]
for the bundles $\gamma_r$, where $U_r \to BSO(r)$ is the universal bundle.
We then obtain a stable vector bundle $\xi: B \to BSO$
by taking the map induced in the colimit by the classifying maps $B_r \to BSO(r)$.

This construction has the following feature.
For a manifold $L$ of dimension $n$, the stabilization maps $\R \oplus \gamma_r \to \gamma_{r+1}$
induce stabilization maps
\[
\begin{tikzcd}
\left[\R^k \oplus TL  , \gamma_{n+k}\right] \arrow{r} &\left[ \R^{k+1} \oplus TL,  \gamma_{n+k+1} \right]
\end{tikzcd}
\]
where for two bundles $U$ and $V$, we denote by $\left[ U,V\right]$ the set of homotopy classes of 
bundle maps $U \to V$
that are fiberwise orientation-preserving isomorphisms.
There is then a canonical bijection between the set of $\xi$-structures on $L$ and the 
colimit
\[
\left[ TL, \gamma \right] := \colim_{k} \left[ \R^k \oplus TL , \gamma_{n+k}\right]
\]

\section{Cobordism groups of formal Lagrangian immersions}
\label{section:formal_cob_group}

In this section, we use the h-principle for Lagrangian immersions
to show that $\Glag(M)$ is isomorphic
to a cobordism group $\Gform(M)$
of formal Lagrangian immersions.
Inspired by the h-principle, the group $\Gform(M)$ that
we consider
has generators given by Lagrangian bundle maps $TL \to TM$
equipped with a cohomology class of primitives of the pullback of $\omega$,
which we call a \emph{trivialization}.
In Section \ref{subsection:triv_exact_forms}, we introduce trivializations
of exact forms
and prove some basic properties.
In Section \ref{subsection:cob_group_formal_lags}, 
we introduce the cobordism group $\Gform(M)$
and prove the isomorphism $\Glag(M) \iso \Gform(M)$.

\subsection{Trivializations of exact differential forms}
\label{subsection:triv_exact_forms}

Let $L$ be a manifold, possibly with boundary.
Let $\alpha$ be an exact $q$-form on $L$.
A primitive of $\alpha$ is a form $\lambda$ of degree $q-1$
with $d \lambda = \alpha$.
Two primitives $\lambda$ and $\lambda'$
of $\alpha$ are called cohomologous
if the closed form $\lambda - \lambda'$ is exact.

\begin{definition}
\label{def:triv_exact_form}
A \Def{trivialization} of an exact form $\alpha$ is a cohomology class of primitives of $\alpha$.
The set of trivializations of $\alpha$ is denoted $\Triv(\alpha)$.
\end{definition}

Note that if $\alpha = 0$, then $\Triv(\alpha) = H_{\dR}^{q-1}(L)$.
In this case, there is a preferred trivialization given by $0 \in H_{\dR}^{q-1}(L)$.
In the general case, $\Triv(\alpha)$ is an affine space 
over $H_{\dR}^{q-1}(L)$,
but there is no canonical identification with $H_{\dR}^{q-1}(L)$.

The set of trivializations defines a functor
in the following way:
if $(L, \alpha)$ and $(L', \alpha')$
are manifolds equipped with exact $q$-forms
and $f: L \to L'$ is a smooth map such that $f^*\alpha' = \alpha$,
then there is a natural pullback map $f^*: \Triv(\alpha') \to \Triv(\alpha)$
which is equivariant with respect to the actions of $H^{q-1}_{\dR}(L)$ and 
$H^{q-1}_{\dR}(L')$.

\subsubsection{Extension of trivializations to a cobordism.}

Suppose now that $V$ is a compact manifold with boundary, $\alpha$ is an exact $q$-form on $V$,
and we are given a trivialization $\tau$ of the restriction $\alpha|_{\bdry V}$.
The obstruction to the existence of a trivialization on $V$ 
extending $\tau$ is given by a class $\obs(\alpha; \tau) \in H^{q}(V, \bdry V; \R)$.
This class can be described in the de Rham model as follows.
Following \cite[p.78]{Bott-Tu}, 
we take as a model for relative cohomology the mapping fiber of the map induced by the inclusion $\iota: \bdry V \to V$:
\[
\Omega^*(V, \bdry V) := \Omega^*(V) \oplus \Omega^{*-1}( \bdry V),
\]
with the differential given by $d(\eta, \lambda) = (d \eta, \iota^*\eta- d \lambda)$.
A cocycle of $\Omega^*(V, \bdry V)$ is then a pair $(\eta, \lambda)$
such that $d \eta = 0$ and $d\lambda = \eta|_{\bdry V}$.
Note that the complex $\Omega^*(V, \bdry V)$ is a shift of the mapping cone of $\iota^*$.

Suppose now that $\alpha$ is an exact $q$-form on $V$
and that we are given a trivialization $\tau$ of $\alpha|_{\bdry V}$.
Then any representative $\lambda$ of the trivialization
determines a cocycle $(\alpha, \lambda) \in \Omega^q(V, \bdry V)$, and
the cohomology class $[(\alpha, \lambda)] \in H_{\dR}^q(V, \bdry V)$
does not depend on the choice of $\lambda$.
We then set $\obs(\alpha; \tau) = [(\alpha, \lambda)]$.
Note that by definition $\obs(\alpha; \tau) = 0$ if and only if there is a 
trivialization of $\alpha$ over $V$ that extends the trivialization $\tau$ over $\bdry V$.

\subsubsection{Homotopies and flux.}

We now consider the following situation.
Let $(M, \omega)$ be a manifold equipped with a closed $q$-form $\omega$.
Let $H: L \times I \to M$ be a homotopy, where $L$ is a closed manifold,
and suppose that $H^*\omega$ is an exact form.
For convenience, we will use the notation $V = L \times I$ and write $i_t: L \to V$ for the inclusion at time $t$
and $f_t = H \circ i_t$.

The homotopy $H$ induces
an $H_{\dR}^{q-1}(L)$-equivariant isomorphism $\psi_H: \Triv(f_0^* \omega) \to \Triv(f_1^* \omega)$ as follows.
Let $\lambda_0 \in \Omega^{q-1}(L)$ be a form with $d\lambda_0 = f_0^*\omega$.
By the homotopy invariance of de Rham cohomology, there exists a primitive
$\Lambda$ of $H^*\omega$
with $i_0^*\Lambda = \lambda_0$.
Moreover, any two such primitives are cohomologous.
We then set $\Psi_H(\lambda_0) = i_1^*\Lambda$.
It is easy to check that this induces a well-defined map on trivializations.

Suppose now that $\tau$ is a trivialization of $H^*\omega|_{\bdry V}$.
Note that this is the same as a pair of trivializations $\tau_k$ of $f_k^*\omega$ for $k=0,1$.
We define the flux of $H$
with respect to $\tau$
as the unique class $\flux(H; \tau) \in H_{\dR}^{q-1}(L)$
such that $\psi_H(\tau_0) = \tau_1 + \flux(H; \tau)$.
Note that in the case where $f_k^* \omega =0$ and both ends are equipped with the same trivialization,
this agrees with the standard definition of flux as the $\omega$-volume
swept out by cycles under the homotopy.

We collect the properties of flux that we shall need in the following lemma.
In the statement of the lemma, we shall make use of the isomorphism $H_{\dR}^{q-1}(L) \iso H_{\dR}^q(V, \bdry V)$
given by the following composition:
\begin{equation}
\label{eq:iso_relative_coh}
\begin{tikzcd}
H_{\dR}^{q-1}(L) \arrow{r}{j} &H_{\dR}^{q-1}(\bdry V) \arrow{r}{\delta} & H_{\dR}^q(V, \bdry V)
\end{tikzcd}
\end{equation}
where the map $j$ is the inclusion of the component $L \times \{ 1 \} \subset \bdry V$, 
and $\delta$ is the boundary map of the long exact sequence of the pair.

\begin{lemma}
\label{lemma:properties_of_flux}
\hfill
\begin{enumerate}[label = (\roman*), font=\normalfont]

	\item $\flux(H; \tau)$ is invariant under homotopies of $H$ relative to the boundary.
	
	\item If $H$ and $K$ are composable homotopies 
	and $H\# K$ denotes their concatenation, then
	\[
	\flux( H \# K; \tau) = \flux(H; \tau_-) + \flux(K; \tau_+),
	\]
	where $\tau_{\pm}$ are trivializations
	such that $\tau_-|_{L_0} = \tau|_{L_0}$, $\tau_+|_{L_1} = \tau|_{L_1}$ and $\tau_-|_{L_1} = \tau_+|_{L_0}$.
	
	\item The isomorphism \eqref{eq:iso_relative_coh}
	sends $\flux(H; \tau)$ to $\obs(H^*\omega; \tau)$.
\end{enumerate}
\end{lemma}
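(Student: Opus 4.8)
\textbf{Proof plan for Lemma \ref{lemma:properties_of_flux}.}

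The plan is to treat the three statements in order, using the definition of $\flux(H;\tau)$ via the parallel-transport isomorphism $\psi_H$ on trivializations, together with the de Rham model for relative cohomology set up just above the lemma. For (i), I would observe that $\psi_H$ depends only on the de Rham cohomology class of $H^*\omega$ together with its restrictions to the ends, both of which are determined by $H$ up to a homotopy rel boundary: if $H$ and $H'$ agree on $\bdry V = L_0 \sqcup L_1$ and are joined by a homotopy rel $\bdry V$, then for any primitive $\Lambda$ of $H^*\omega$ with prescribed restriction $\lambda_0$ over $L_0$, the homotopy invariance of de Rham cohomology (applied on $V \times I$, pulling back along the big homotopy) produces a primitive $\Lambda'$ of $(H')^*\omega$ with the same restriction over $L_0$ and a restriction over $L_1$ cohomologous to that of $\Lambda$. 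Hence $\psi_H = \psi_{H'}$ as maps on trivializations, so $\flux(H;\tau) = \flux(H';\tau)$.

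For (ii), I would unwind the definition of concatenation. Given composable homotopies $H: L \times I \to M$ from $f_0$ to $f_1$ and $K$ from $f_1$ to $f_2$, pick a representative primitive $\lambda_1$ of $f_1^*\omega$ in the class $\tau_-|_{L_1} = \tau_+|_{L_0}$. Extend it over $H$ to a primitive $\Lambda$ with $i_0^*\Lambda$ representing $\tau|_{L_0}$-ish (more precisely $\psi_H$ applied to $\tau|_{L_0}$ yields $\tau|_{L_1} + \flux(H;\tau_-)$) and over $K$ to a primitive $M'$ with $i_0^*M' = \lambda_1$. These glue to a primitive of $(H\#K)^*\omega$ because both restrict to $\lambda_1$ over the common slice; evaluating its restrictions at the two ends and comparing with $\tau|_{L_0}$ and $\tau|_{L_1}$ gives the additivity formula directly from the definition of $\psi_{H\#K} = \psi_K \circ \psi_H$ and the $H^{q-1}_{\dR}(L)$-equivariance of each factor.

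Part (iii) is the main obstacle and the technical heart of the lemma: it requires matching two different models of the same invariant. The strategy is to take a representative primitive $\lambda_1$ of $\tau_1$ over $L_1 = L \times \{1\}$ and a representative $\lambda_0$ of $\tau_0$ over $L_0$, choose a primitive $\Lambda$ of $H^*\omega$ on $V$ restricting to $\lambda_0$ at time $0$, so that by definition $i_1^*\Lambda$ represents $\psi_H(\tau_0) = \tau_1 + \flux(H;\tau)$; thus $i_1^*\Lambda - \lambda_1$ is a closed $(q-1)$-form on $L_1$ representing $\flux(H;\tau)$. Now I build an explicit relative cocycle: the pair $(\eta, \mu) \in \Omega^q(V,\bdry V)$ with $\eta = H^*\omega$ and $\mu = (\lambda_0, \lambda_1) \in \Omega^{q-1}(\bdry V)$, where $\mu$ restricts to $\lambda_0$ over $L_0$ and $\lambda_1$ over $L_1$ — this is exactly the cocycle $(H^*\omega, \mu)$ whose class is $\obs(H^*\omega;\tau)$. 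I would then produce an explicit primitive showing $(H^*\omega, \mu)$ is cohomologous in $\Omega^*(V,\bdry V)$ to the cocycle $(0, i_1^*\Lambda - \lambda_1)$ supported over $L_1$ (use the pair $(\Lambda, 0)$ as a chain: $d(\Lambda, 0) = (d\Lambda, \iota^*\Lambda) = (H^*\omega, (\lambda_0, i_1^*\Lambda))$, then adjust by the exact term coming from $\lambda_1$ over $L_1$). Chasing this through, $\obs(H^*\omega;\tau)$ equals the image of the closed form $i_1^*\Lambda - \lambda_1$ under the composite $H^{q-1}_{\dR}(\bdry V) \xrightarrow{\delta} H^q_{\dR}(V,\bdry V)$, restricted to the $L_1$-component — which is precisely the image of $\flux(H;\tau)$ under the map \eqref{eq:iso_relative_coh}. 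The only real care needed is keeping track of signs in the differential $d(\eta,\lambda) = (d\eta, \iota^*\eta - d\lambda)$ and the orientation convention on $\bdry V = L_0 \sqcup L_1$, which is where I expect the bookkeeping to be delicate; I would fix these by checking the $q = 1$, $V = L \times I$ case against the classical flux-as-swept-volume interpretation already mentioned in the text.
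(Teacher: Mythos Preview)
Your proposal is correct and essentially matches the paper's approach---indeed it is considerably more detailed, since the paper's entire proof reads ``These properties follow directly from the definition of flux.'' What you have written is precisely the unpacking of that one-line remark: parts (i) and (ii) are immediate from the fact that $\psi_H$ is defined via extensions of primitives and hence composes/depends only on de Rham data rel boundary, and your computation for (iii) using the cochain $(\Lambda,0)$ in the mapping-cone model $\Omega^*(V,\bdry V)$ is the standard way to identify the obstruction class with the boundary image of a closed form supported on $L_1$. The only caveat, which you already flagged, is the sign: with the differential $d(\eta,\lambda)=(d\eta,\iota^*\eta-d\lambda)$ one finds $(H^*\omega,(\lambda_0,\lambda_1)) - d(\Lambda,0) = (0,(0,\lambda_1 - i_1^*\Lambda))$, so the class you obtain is $\delta j[\lambda_1 - i_1^*\Lambda]$ rather than $\delta j[i_1^*\Lambda-\lambda_1]$; this is absorbed by the sign convention for the connecting map $\delta$, and your plan to normalize by checking the $q=1$ case against the swept-volume interpretation is the right way to pin it down.
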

\begin{proof}
These properties follow directly from the definition of flux.
\end{proof}

\subsection{Cobordism of formal Lagrangian immersions}
\label{subsection:cob_group_formal_lags}

Let $(M, \omega)$ be a symplectic manifold of dimension $2n$.
We consider the cobordism theory of Lagrangian bundle maps $TL \to TM$ equipped 
with a trivialization of the pullback of $\omega$.

\begin{definition}
\label{def:formal_Lag_immersion}
Let $L$ be a closed manifold of dimension $n$.
A \Def{formal Lagrangian immersion} of $L$ in $M$ consists of
\begin{enumerate}[label = (\roman*), font=\normalfont]
	\item a Lagrangian bundle map $\vphi: TL \to TM$
covering a map $f: L \to M$ that satisfies $[f^*\omega] = 0 \in H^2_{\dR}(L)$,
	\item a choice of trivialization of $f^*\omega$.
\end{enumerate}
\end{definition}

\begin{remark}
Suppose that $(M,\omega)$ is an exact symplectic manifold
and that we fix a choice of primitive $\lambda$ of $\omega$.
Then for any map $f: L \to M$,
the choice of $\lambda$
determines an identification
$\Triv(f^*\omega) \iso H^1_{\dR}(L)$.
Thus, in this case we could
replace the datum (ii) in Definition \ref{def:formal_Lag_immersion}
by a choice of cohomology class $a \in H^1(L;\R)$.
This is the approach taken in \cite{Eliashberg}.
Note however that this reformulation of the definition depends on the specific choice of $\lambda$;
in particular, it breaks the functoriality with respect to symplectic maps.\EndRemark
\end{remark}

In order to define cobordisms between formal Lagrangian immersions, we make the following definitions.
First, given a Lagrangian bundle map $\vphi: TL \to TM$,
we define its stabilization to be the Lagrangian bundle map $\stab \vphi: \R \oplus TL \to \C \oplus TM$
obtained by taking the direct sum with the canonical inclusion $\R \subset \C$.

Secondly, if $V$ is an oriented cobordism with negative end $L_0$ and positive end $L_1$,
then there are trivializations 
$TV|_{L_k} \iso \R \oplus T L_k$, canonical up to a contractible choice,
which are determined by the outward pointing vectors for the positive end and
by the inward pointing vectors for the negative end.
In particular, suppose that we have a Lagrangian bundle map $\Phi: TV \to \C \oplus TM$
covering a map $F: V \to M$, with $f_k = F|_{L_k}$.
Then $\Phi$
induces Lagrangian bundle maps $\Phi|_{L_k}: \R \oplus T L_k \to \C \oplus TM$
covering $f_k$,
well-defined up to bundle homotopy over $f_k$ (in the sense that the homotopies
are required to project to $f_k$ for all time).

\begin{definition}
\label{def:formal_Lag_cob}
Let $\vphi_0: TL_0 \to TM$ and $\vphi_1: TL_1 \to TM$
be formal Lagrangian immersions with base maps $f_k: L_k \to M$.
A \Def{formal Lagrangian cobordism}
from $\vphi_0$ to $\vphi_1$ consists of an oriented cobordism
$V$ from $L_0$ to $L_1$,
along with a Lagrangian bundle map $\Phi: V \to \C \oplus TM$
covering a map $F: V \to M$,
which satisfies the following conditions:

\begin{itemize}
	\item $F|_{L_k} = f_k$ for $k=0,1$.
	\item $\Phi|_{L_k}$ and $\stab \vphi_k$ are homotopic over $f_k$ for $k= 0,1$.

	\item $F^*\omega$ is exact and $V$ is equipped with
	a trivialization of $F^*\omega$
	that restricts to the given trivializations of $f_k^*\omega$ over $L_k$.
\end{itemize}
\end{definition}

We denote by $\Gform(M)$ the cobordism group of formal Lagrangian immersions
modulo formal Lagrangian cobordisms.
This is clearly functorial with respect to symplectic maps $(M, \omega) \to (M', \omega')$
between manifolds of the same dimension.

There is a natural morphism $\psi: \Glag(M) \to \Gform(M)$
that sends a Lagrangian immersion $f: L \to M$ to the formal Lagrangian $Df: TL \to TM$ equipped with the zero trivialization.
The main result of this section is as follows.

\begin{proposition}
\label{prop:iso_formal_cob_group}
$\psi$ is a an isomorphism.
\end{proposition}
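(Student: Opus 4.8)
The plan is to show that $\psi\colon \Glag(M) \to \Gform(M)$ is surjective and injective, using the h-principle (Theorem \ref{thm:h-principle}) and the h-principle for exact Lagrangian immersions (Theorem \ref{thm:h-principle_exact}) as the main tools, together with a localization/collar argument to handle the trivialization data.

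For surjectivity, let $(\vphi, \tau)$ be a formal Lagrangian immersion of a closed manifold $L$, with base map $f\colon L \to M$ and $[f^*\omega] = 0$. By Theorem \ref{thm:h-principle}, $\vphi$ is homotopic to a holonomic solution $Df'$ for some Lagrangian immersion $f'\colon L \to M$; this already shows that the underlying Lagrangian bundle map is in the image, but I still need to arrange that the trivialization $\tau$ is realized as the zero trivialization of a \emph{cobordant} Lagrangian immersion. The key point is that $\psi$ sends $f'$ to $(Df', 0)$, and the difference between $(Df', 0)$ and $(\vphi, \tau) \simeq (Df', \tau)$ in $\Gform(M)$ is the class $\tau \in H^1_{\dR}(L) = \Triv(0)$ (after fixing the homotopy to $f'$). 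So I reduce to showing: for any Lagrangian immersion $g\colon L \to M$ and any class $a \in H^1(L;\R)$, the formal Lagrangian immersion $(Dg, a)$ is formally cobordant to a genuine Lagrangian immersion equipped with the zero trivialization. This is done by a standard ``suspension/flux'' construction: choose a closed $1$-form representing $a$, use it to build a path of Lagrangian immersions in a Weinstein neighborhood whose flux is $a$ — concretely, one works in $T^*L$ near the zero section and deforms the zero section through graphs of (multiples of) the closed form — and the trace of this isotopy is a formal Lagrangian cobordism from $(Dg, a)$ to $(Dg', 0)$ for a Lagrangian immersion $g'$ isotopic to $g$. Here the computation of the flux via Lemma \ref{lemma:properties_of_flux}(iii) is what certifies that the obstruction class matches, so that the trivialization glues correctly across the cobordism.

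For injectivity, suppose $f_0, f_1\colon L_i \to M$ are Lagrangian immersions with $\psi(f_0)$ formally cobordant to $\psi(f_1)$ via $(\Phi, F, V)$ carrying a trivialization $\tau$ of $F^*\omega$ restricting to $0$ on both ends. I want to upgrade $(\Phi, F, V)$ to a genuine Lagrangian cobordism. First, use Theorem \ref{thm:h-principle} relative to a collar neighborhood of $\partial V$ (where $\Phi$ is already essentially holonomic, coming from the $f_k$) to make $\Phi$ holonomic, giving a Lagrangian immersion $\wtilde F\colon V \to M$. But a genuine Lagrangian cobordism in the sense of the first definition must land in $T^*[0,1] \times M$ and be \emph{exact} in the cylindrical direction; equivalently, after identifying things, the issue is that $\wtilde F^*\omega = 0$ automatically, but one needs the primitive data (the trivialization $\tau$, which now is a trivialization of the zero form, i.e. a class in $H^1_{\dR}(V)$) to be compatible with building the map into $T^*[0,1] \times M$. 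Here I invoke the exact h-principle (Theorem \ref{thm:h-principle_exact}): working in $T^*[0,1] \times M$ with the family of Liouville-type forms, the trivialization $\tau$ prescribes the ``exactness type'' of the cobordism, and since $\tau$ restricts to $0$ on both boundary components, the resulting cobordism $F$ into $T^*[0,1]\times M$ restricts to the standard cylinders $(t, 0, f_k(x))$ near the ends, as required by the definition of Lagrangian cobordism.

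The main obstacle, and the step I expect to require the most care, is the bookkeeping of trivializations across the cobordism in the injectivity argument — specifically, ensuring that the class $\tau \in \Triv(F^*\omega)$ can be used to construct the map into the cotangent direction $T^*[0,1]$ with the correct boundary behavior, rather than merely into $M$ itself. The de Rham obstruction-theoretic setup of Section \ref{subsection:triv_exact_forms}, together with the flux properties in Lemma \ref{lemma:properties_of_flux} and the exact h-principle, is exactly what is needed to run this argument, but stitching the collar embeddings, the holonomic approximation, and the choice of primitive into a single coherent cobordism is the delicate part; the purely formal (bundle-map) parts of both directions are routine consequences of Theorem \ref{thm:h-principle}.
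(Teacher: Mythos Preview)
Your surjectivity argument is essentially the paper's: apply the h-principle to get a genuine Lagrangian $g$, then correct the flux so that the trace of the homotopy carries the right trivialization. The paper packages the flux-correction step as Lemma~\ref{lemma:existence_of_homotopy_of_arbitrary_flux}, proved via the exact h-principle (Theorem~\ref{thm:h-principle_exact}); your ``graphs of closed $1$-forms in a Weinstein neighborhood'' is a more hands-on version of the same idea, though you should be aware that a representative of a given class $a$ need not fit inside a fixed Weinstein tube, so one has to iterate or otherwise argue more carefully---this is exactly what the exact h-principle handles for free.

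The injectivity argument, however, has a genuine gap. First, there is no Lagrangian immersion $\wtilde F\colon V\to M$: $\dim V = n+1$ while $M$ has dimension $2n$, and $\Phi$ is a bundle map $TV\to\C\oplus TM$, not $TV\to TM$. To make $\Phi$ holonomic you must first produce a base map into a $(2n+2)$-dimensional symplectic manifold, namely $\C\times M$; the paper does this explicitly by choosing a collar function $\rho\colon V\to[0,1]$ and setting $\wtilde F(p)=(\rho(p),F(p))$. Second, and more importantly, you misidentify the role of the trivialization $\tau$. There is no ``exactness in the cylindrical direction'' condition in the definition of Lagrangian cobordism, and Theorem~\ref{thm:h-principle_exact} (which concerns paths of immersions of a fixed $L$ into an \emph{exact} manifold) is not applicable here. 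The trivialization is used \emph{before} the h-principle, not after: the existence of a trivialization of $F^*\omega$ extending the zero trivialization on $\partial V$ is precisely the statement that $[F^*\omega]=0$ in $H^2(V,\partial V;\R)$, which is exactly the relative cohomological hypothesis needed to apply Theorem~\ref{thm:h-principle} rel $\partial V$ to $\wtilde\Phi$ in $\C\times M$. One application of the relative h-principle then directly produces a genuine Lagrangian cobordism; there is no second step.
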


For the proof, we shall need the following lemma.

\begin{lemma}
\label{lemma:existence_of_homotopy_of_arbitrary_flux}
Let $f: L \to M$ be a Lagrangian immersion.
For every $a \in H^1(L;\R)$,
there exists a path of Lagrangian immersions $F = (f_t)_{t \in [0,1]}$
such that $f_0 = f$ and $\flux(F) = a$.
\end{lemma}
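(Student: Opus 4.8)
The plan is to produce the desired path of Lagrangian immersions by a local modification of $f$ supported near an embedded hypersurface Poincaré dual to the prescribed flux class. Concretely, I would first reduce to the case where $a$ is represented by a closed $1$-form $\mu$ supported in a small tubular neighborhood $N \cong S^{n-1}\times(-1,1) \times \ldots$ — more precisely, fix a closed $1$-form $\mu$ on $L$ representing $a$, and note that it suffices to realize the flux class of a homotopy via any convenient representative. The key geometric input is the following standard construction: given a Lagrangian immersion $f$ and a closed $1$-form $\mu$ on $L$, the time-$t$ image of $f$ under the fiberwise-flow generated by $\mu$ (using a Weinstein-type neighborhood of the immersed Lagrangian, or more simply working in a local Darboux chart and patching) gives a path $f_t$ of Lagrangian immersions whose flux is exactly $[\mu] = a$. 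This is essentially the observation that the graph of $t\mu$ inside $T^*L$ is Lagrangian and its symplectic area swept equals $t[\mu]$.

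The main technical point to be careful about is that $f$ is only an \emph{immersion}, not an embedding, so there is no global Weinstein neighborhood of $f(L)$ in $M$. The plan is to circumvent this by working directly on $L$: choose a Riemannian metric, let $\mu$ be a closed $1$-form representing $a$, and use the fact that any Lagrangian immersion $f: L \to M$ extends to an immersion $\hat f$ of a neighborhood of the zero section of $T^*L$ into $M$ which is symplectic near the zero section (this is the immersed analogue of the Weinstein neighborhood theorem, and holds because the obstruction is purely local). Then $f_t := \hat f \circ (\text{graph of } t\mu)$ is a well-defined path of Lagrangian immersions with $f_0 = f$. Computing the flux of this path amounts to computing the symplectic area swept out by a cycle $\gamma$ in $L$: under the homotopy, the area is $\int_{\gamma\times[0,1]} (f_t)^*\omega$-type integral, which by the local model equals $\int_\gamma \mu = \langle a, [\gamma]\rangle$. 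Hence $\flux(F) = a$ by definition (here I use that $f^*\omega = 0$ so all the $f_t^*\omega$ vanish and the trivializations on the two ends are the zero ones, so the flux in the sense of Section \ref{subsection:triv_exact_forms} reduces to the classical swept-area flux).

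I expect the bulk of the write-up to be: (1) stating and invoking the immersed Weinstein neighborhood theorem, or alternatively giving an ad hoc patching argument using a good cover of $L$ by Darboux charts adapted to $f$; and (2) checking that the swept-area computation matches the definition of $\flux(F)$ given earlier via the isomorphism $\Psi_H$ on trivializations — this is the ``easy to check'' sort of bookkeeping alluded to after the definition of flux, but it does need the identification $\Triv(f_0^*\omega) = \Triv(0) = H^1_{\dR}(L)$ and a short verification that $\Psi_H(0) = -a$ (or $+a$, depending on sign conventions), so that $\flux(F) = a$. The genuine obstacle is the first point: making the immersed-neighborhood step rigorous without a global embedding. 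If one prefers to avoid the immersed Weinstein theorem altogether, an alternative is to appeal to Theorem \ref{thm:h-principle_exact} applied locally, or to simply cite that this flux-realization statement is classical (it appears in Audin's work and in Eliashberg's paper); but since the paper seems to want a self-contained treatment, I would lean on the local normal form argument sketched above.
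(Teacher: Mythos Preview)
Your approach via the graph of $t\mu$ in a Weinstein neighborhood is sound in outline and is the classical construction, but it differs from the paper's proof, and there is one technical point you have not addressed.

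Both arguments reduce via the (immersed) Weinstein neighborhood theorem to the local model of the zero section $\iota: L \hookrightarrow U \subset T^*L$. From there the paper does \emph{not} use the graph construction. Instead it invokes the parametric h-principle for exact Lagrangian immersions (Theorem \ref{thm:h-principle_exact}): one chooses closed $1$-forms $\alpha_t$ on $U$ with $\alpha_0 = 0$ and $[\iota^*\alpha_1] = a$, sets $\lambda_t = \lambda - \alpha_t$, and applies Theorem \ref{thm:h-principle_exact} to the constant path $g_t \equiv \iota$ to produce a path $(f_t)$ of Lagrangian immersions into $U$ with each $f_t$ being $\lambda_t$-exact; the flux is then $[f_1^*\lambda] - [\iota^*\lambda] = a$.

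The paper's route sidesteps a size constraint implicit in your argument: for $f_t = \hat f \circ \mathrm{graph}(t\mu)$ to make sense, the graph of $\mu$ must lie inside the domain of $\hat f$, and since $[\mu] = a$ is prescribed you cannot simply rescale $\mu$ to be $C^0$-small. This is fixable --- for instance, realize flux $a/N$ with a sufficiently small representative, re-apply the Weinstein theorem around the new immersion, and iterate using additivity of flux under concatenation (Lemma \ref{lemma:properties_of_flux}(ii)); alternatively, observe that the set of realizable fluxes is an open subgroup of $H^1(L;\R)$, hence all of it --- but the point should be made explicit. The h-principle approach produces immersions into $U$ regardless of the size of $a$, so no such patching is needed.

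In short: your method is more elementary in that it avoids the h-principle, while the paper's is cleaner given that Theorem \ref{thm:h-principle_exact} is already available. The alternative you mention at the end --- appealing to Theorem \ref{thm:h-principle_exact} --- is in fact precisely what the paper does.
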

\begin{proof}
By the Weinstein neighborhood theorem, it suffices
to prove the lemma in the case of the Lagrangian immersion $\iota: L \to U$,
where $U$ is a tubular neighborhood of the zero-section in $T^*L$ and $\iota$ is the zero-section.
Let $\alpha_t$ be a smooth family of closed $1$-forms on $U$
such that $\alpha_0 = 0$ and $[\iota^*\alpha_1] = a$.
Let $\lambda$ be the Liouville form on $U$ and consider
the family of primitives $\lambda_t = \lambda - \alpha_t$
of the standard symplectic form.
Consider the path of Lagrangian immersions $(g_t)_{t \in [0,1]}$ given by $g_t = \iota$ for all $t$.
Note that $g_0 = \iota$ is exact with respect to $\lambda_0 = \lambda$.
Applying the parametric h-principle for exact Lagrangian immersions (Theorem \ref{thm:h-principle_exact}) to the 
path $(g_t)$ and the family $(\lambda_t)$, 
we obtain a path of Lagrangian immersions $(f_t)_{t \in [0,1]}$ such
that $f_0 = \iota$ and $f_t^* \lambda_t$ is exact for all $t$.
The path $(f_t)$ has flux $[f_1^*\lambda] - [\iota^*\lambda] = a$, as required.
\end{proof}

\begin{proof}[Proof of Proposition \ref{prop:iso_formal_cob_group}]




First, we show that $\psi$ is injective.
Suppose that $f: L \to M$ is a Lagrangian immersion such that 
$[(D f, 0)] = 0$ in $\Gform(M)$.
This means that there is a cobordism $V$ with $\bdry V = L$
and a Lagrangian bundle map $\Phi: TV \to \C \oplus TM $ 
such that $\Phi|_L$ is homotopic to $\stab Df$ over $f$.
Moreover, denoting by $F: V \to M$ the base map of $\Phi$, 
there is a trivialization of $F^*\omega$ that
extends the zero trivialization on $\bdry V$.
In particular, this implies that $[F^*\omega] =0 \in H^2(V, \bdry V; \R)$.

By a homotopy, we may assume that,
in a collar neighborhood $U \iso L \times [0,\eps)$ of $\bdry V$,
we have $F(x,s) = f(x)$ and $\Phi(x,s)= \stab Df_x$,
where $(x,s) \in L \times [0, \eps)$ are collar coordinates.

Let $\rho: V \to [0,1]$ be a smooth map such that $\rho^{-1}(0)= \bdry V$
and $\rho$ extends the collar coordinate $s$.
Let $\wtilde{F}: V \to \C \times M$ be the map defined by
$\wtilde{F}(p) = (\rho(p), F(p))$.
Then $\Phi$ lifts in the obvious way to a Lagrangian bundle map $\wtilde{\Phi}: TV \to T( \C \times M)$
covering $\wtilde{F}$.
Note that this is holonomic on the collar neighborhood $U$.
Moreover, we have that 
\[
\left[\wtilde{F}^*( \omega_{\C} \oplus \omega) \right] = \left[ F^* \omega \right] = 0 \in H^2(V, \bdry V; \R),
\]
hence the cohomological
condition of the relative $h$-principle (Theorem \ref{thm:h-principle}) is satisfied.
By the relative $h$-principle, $\wtilde{F}$ is homotopic rel. boundary to a true Lagrangian cobordism,
hence $[f] = 0$ in $\Glag(M)$.




We now prove that $\psi$ is surjective.
Let $\vphi: TL \to TM$ be a formal Lagrangian covering a map $f: L \to M$,
equipped with a trivialization $\tau$ of $f^*\omega$.
Since $f^*\omega$ is exact, by the $h$-principle (Theorem \ref{thm:h-principle})
$\vphi$ is homotopic 
through Lagrangian bundle maps
to $Dg$, where $g: L \to M$ 
is a true Lagrangian immersion.
Let $\wtilde{H}: TL \times I \to TM$ be a homotopy from $\vphi$ to $D g$ 
covering a homotopy $H: L \times I \to M$ from $f$ to $g$.
Then $\wtilde{H}$ has an obvious lift to a Lagrangian bundle map $\Phi: T(L \times I) \to \C \oplus TM$
by sending $\del/\del t \in T_s I$ to $1 \in \C$.

The form $H^*\omega$ is exact by the homotopy invariance of de Rham cohomology.
By Lemma \ref{lemma:existence_of_homotopy_of_arbitrary_flux} 
and Lemma \ref{lemma:properties_of_flux}(ii), 
up to changing $g$ by a Lagrangian homotopy
we may assume that the homotopy $H$ has flux $0$ with respect to the trivialization $\tau$ over $L \times \{ 0 \}$
and the zero trivialization over $L \times \{ 1 \}$.
By Lemma \ref{lemma:properties_of_flux}(iii), 
there is a trivialization of $H^* \omega$ over $L \times I$ that extends the given trivializations on the ends.
Hence $\Phi$ equipped with a choice of trivialization is a formal Lagrangian cobordism, which shows that $[(\vphi, \tau)] = [(Dg, 0)]$ in $\Gform(M)$.
We conclude that $\psi$ is surjective.
\end{proof}

\section{Formal Lagrangian immersions as tangential structures}
\label{section:cob_group_tangential_struct}

In this section, we construct for every symplectic manifold $M$ a 
natural stable vector bundle $\gamma^{\omega}$
with the property that
$\gamma^{\omega}$-structures on the tangent bundle of a manifold $L$
correspond
to homotopy classes of stable formal Lagrangian immersions of $L$ in $M$.

\subsection{Lagrangian Grassmannians}

For a symplectic vector bundle $E \to B$ of rank $2k$,
we denote by $\Grass(E)$ the Grassmannian of oriented Lagrangian subspaces of $E$.
The projection $\Grass(E) \to B$ is a fiber bundle with fiber $\Lambda_k$, the Grassmannian of oriented Lagrangian subspaces of
the standard symplectic vector space $\R^{2k}$.
Recall that there is a transitive action of $U(k)$ on $\Lambda_k$, which induces a $U(k)$-equivariant diffeomorphism 
$\Lambda_k \iso U(k)/SO(k)$.

The space $\Grass(E)$ carries a tautological bundle $\gamma \to \Grass(E)$, which
is canonically oriented.
There is a tautological Lagrangian bundle map $\gamma \to E$
covering the projection $\Grass(E) \to B$.
The map $\gamma \to E$ has the following universal property.
\begin{lemma}
\label{lemma:univ_prop_lag_grass}
Let $\eta$ be an oriented vector bundle of rank $k$.
Then any Lagrangian bundle map $\vphi: \eta \to E$
has a unique factorization as follows:
\[
\begin{tikzcd}
												&\gamma \arrow{d} \\
\eta \arrow{r}[swap]{\vphi} \arrow{ur}{\Phi}	&E
\end{tikzcd}
\]
where $\Phi$ is fiberwise an orientation-preserving isomorphism.
\end{lemma}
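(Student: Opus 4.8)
The plan is to construct the factorization $\Phi$ fiberwise and then check that the resulting bundle map is continuous (smooth) and that it is the unique map with the stated properties. Fix a point $b \in B$ and set $W = E_b$, a symplectic vector space of rank $2k$. The bundle map $\vphi$ restricts to a linear injection $\vphi_b \colon \eta_b \to W$ whose image is a Lagrangian subspace; since $\eta_b$ is oriented and $\vphi_b$ is an isomorphism onto its image, this image inherits an orientation, so it determines a well-defined point $\ell(b) \in \Grass(E)$ lying over $b$. By the definition of the tautological bundle, $\gamma_{\ell(b)}$ is canonically identified with that oriented Lagrangian subspace $\vphi_b(\eta_b) \subset W$, and the tautological map $\gamma \to E$ over $\ell(b)$ is precisely this inclusion. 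Therefore $\Phi_b \colon \eta_b \to \gamma_{\ell(b)}$ must be $\vphi_b$ viewed as a (orientation-preserving) isomorphism onto its image — this forces both the base map and the bundle map, giving uniqueness for free.

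For existence, I would assemble these fiberwise constructions into an actual map of spaces. The assignment $b \mapsto \ell(b)$ defines a section-like map $L \to \Grass(E)$ (more precisely a map over $B$); its continuity follows because $\vphi$ is a continuous bundle map, so locally trivializing $\eta$ and $E$ expresses $\ell$ as the composition of $\vphi$ (read off as a matrix-valued function) with the continuous map sending a Lagrangian frame to the oriented subspace it spans, which is exactly a chart of $\Grass(E)$. Pulling back the tautological bundle along $\ell$ gives $\ell^*\gamma$, and the tautological bundle map then supplies $\ell^*\gamma \to E$; the original $\vphi$ factors through this because on each fiber $\vphi_b$ lands in the corresponding Lagrangian subspace. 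The induced map $\eta \to \ell^*\gamma$ is fiberwise the orientation-preserving isomorphism $\vphi_b$, hence is itself a bundle map that is a fiberwise orientation-preserving isomorphism, and composing with the canonical $\ell^*\gamma \to \gamma$ yields $\Phi$.

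The only genuinely substantive point — the step I expect to be the main obstacle — is verifying that the classifying map $\ell \colon B \to \Grass(E)$ is continuous (and smooth, in the smooth category), i.e. that the "span" map from the space of Lagrangian $k$-frames in $W$ to $\Lambda_k$ is a submersion with local sections compatible with the chart structure on $\Lambda_k \iso U(k)/SO(k)$. This is where the orientation bookkeeping also has to be handled carefully: one must check that the orientation $\eta_b$ transports continuously to an orientation of $\vphi_b(\eta_b)$ and that this matches the canonical orientation convention on $\gamma$. Once continuity/smoothness of $\ell$ is in hand, the rest is formal: the universal property is essentially a restatement of the definition of the tautological bundle, and uniqueness is immediate since any factorization is forced fiberwise.
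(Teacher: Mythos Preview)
The paper states this lemma without proof, treating the universal property of the tautological bundle as a standard fact. Your argument is the correct and standard one: define the classifying map pointwise by sending a point to the oriented Lagrangian subspace given by the image of $\vphi$ over that point, and then observe that the factorization through $\gamma$ is forced fiberwise.

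One small notational slip to fix: you work over $B$, the base of $E$, writing ``fix $b \in B$'' and forming $\eta_b$. But $\eta$ need not be a bundle over $B$; the bundle map $\vphi$ covers some map $f$ from the base of $\eta$ (call it $X$) to $B$, and the classifying map $\ell$ is a map $X \to \Grass(E)$ lifting $f$, not a section of $\Grass(E) \to B$. Once you replace $b \in B$ by $x \in X$ and $E_b$ by $E_{f(x)}$, everything you wrote goes through unchanged. The continuity/smoothness check you flag is routine and exactly as you describe.
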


As a consequence of the universal property, there
is a canonical bijection between Lagrangian bundle maps $\eta \to E$
and bundle maps $\eta \to \gamma$ that are fiberwise orientation-preserving isomorphisms.
In this way, we may see
$\Grass(E)$ as a classifying space for Lagrangian bundle maps of oriented bundles into $E$.

\subsubsection{Stabilization.}

There is a stabilization operation on symplectic bundles that sends $E$ to $\C \oplus E$,
where $\C$ denotes the trivial bundle with the standard symplectic structure, and the
direct sum is equipped with the product structure.
There is an induced stabilization map $j: \Grass(E) \to \Grass(\C \oplus E)$
that sends an oriented Lagrangian subspace $V \subset E$ to $\R \oplus V \subset \C \oplus E$,
where $\R$ has the standard orientation.
We shall need the following stability lemma.

\begin{lemma}
\label{lemma:stab_homotopy_Lag_grass}
Let $E \to B$ be a symplectic vector bundle of rank $2k$. 
Then the stabilization map $j: \Grass(E) \to \Grass (\C \oplus E)$ is $k$-connected.
Moreover, for $k=1$, $j$ is an isomorphism on $\pi_1$.

\end{lemma}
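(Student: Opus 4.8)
The plan is to reduce the statement to the analogous, classical connectivity fact for the stabilization map of Lagrangian Grassmannians of the \emph{standard} symplectic vector space, and then globalize over the base $B$. Since $\Grass(E) \to B$ and $\Grass(\C \oplus E) \to B$ are fiber bundles with fibers $\Lambda_k$ and $\Lambda_{k+1}$ respectively, and the stabilization map $j$ is a map of fiber bundles over $\id_B$ covering the fiberwise inclusion $\Lambda_k \hookrightarrow \Lambda_{k+1}$, a comparison of the long exact sequences of homotopy groups (or the relative Serre spectral sequence, or the five lemma applied degreewise) shows that $j$ is $m$-connected on total spaces as soon as the fiber inclusion $\Lambda_k \hookrightarrow \Lambda_{k+1}$ is $m$-connected. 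So it suffices to prove that the stabilization map $\Lambda_k \to \Lambda_{k+1}$, $V \mapsto \R \oplus V$, is $k$-connected, and that for $k=1$ it is an isomorphism on $\pi_1$.

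The first step, then, is to identify the fiber-level map group-theoretically. Using the $U(k)$-equivariant diffeomorphism $\Lambda_k \iso U(k)/SO(k)$ recalled in the excerpt, the inclusion $V \mapsto \R \oplus V$ corresponds to the map $U(k)/SO(k) \to U(k+1)/SO(k+1)$ induced by the standard block inclusions $U(k) \hookrightarrow U(k+1)$ and $SO(k) \hookrightarrow SO(k+1)$. I would run the long exact sequences of the two fibrations $SO(k) \to U(k) \to \Lambda_k$ and $SO(k+1) \to U(k+1) \to \Lambda_{k+1}$ in parallel, connected by the stabilization maps, and feed in the standard connectivity facts: the inclusion $U(k) \to U(k+1)$ is $(2k)$-connected (its homotopy cofiber direction is controlled by $U(k+1)/U(k) \iso S^{2k+1}$), and the inclusion $SO(k) \to SO(k+1)$ is $(k-1)$-connected (with $SO(k+1)/SO(k) \iso S^k$). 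A diagram chase in the ladder of exact sequences then yields that $\Lambda_k \to \Lambda_{k+1}$ is $k$-connected: the map is an isomorphism on $\pi_i$ for $i < k$ and surjective on $\pi_k$, the only obstruction to a higher connectivity estimate coming precisely from $\pi_k SO(k) \to \pi_k SO(k+1)$, which need not be injective.

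For the refinement at $k=1$: here $\Lambda_1 \iso U(1)/SO(1) = S^1$ and $\Lambda_2 \iso U(2)/SO(2)$. I would compute $\pi_1 \Lambda_2$ directly from the fibration $SO(2) \to U(2) \to \Lambda_2$, i.e. from $\pi_1 SO(2) \to \pi_1 U(2) \to \pi_1 \Lambda_2 \to \pi_0 SO(2) = 0$, where $\pi_1 SO(2) \to \pi_1 U(2)$ is the map $\Z \to \Z$ induced by $SO(2) \hookrightarrow U(2)$. One checks this map is multiplication by $2$ (a rotation by angle $\theta$ in $SO(2)$ has determinant $1$ but, viewed in $U(2)$, is diagonal with eigenvalues $e^{\pm i\theta}$, so it is null... more precisely one tracks the generator through $\det: U(2) \to U(1)$), giving $\pi_1 \Lambda_2 \iso \Z$; comparing with $\pi_1 \Lambda_1 = \pi_1 S^1 = \Z$ via the same ladder shows $j_*$ is the identity (equivalently, both sides are detected by the Maslov class, which is manifestly stable), hence an isomorphism. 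The subtle point — and the one requiring the most care — is precisely this low-dimensional bookkeeping of how $\pi_1 SO(k)$ sits inside $\pi_1 U(k)$, since that is what separates the clean statement "$k$-connected, and $\pi_1$-iso for $k=1$" from a naive estimate that would only give $0$-connectivity of $j$ when $k=1$. Everything else is a formal consequence of the fibration comparison.
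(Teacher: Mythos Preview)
Your proposal is correct and follows essentially the same route as the paper: reduce to the fiber inclusion $\Lambda_k \hookrightarrow \Lambda_{k+1}$ via the map of fiber bundles over $B$, then compare the long exact sequences of $SO(r) \to U(r) \to \Lambda_r$ for $r = k, k+1$ using the standard connectivity of $SO(k) \to SO(k+1)$ and $U(k) \to U(k+1)$.

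One small slip in your $k=1$ discussion: the map $\pi_1 SO(2) \to \pi_1 U(2)$ is the zero map, not multiplication by $2$. As your own parenthetical computation in fact shows, the rotation $R_\theta$ has complex eigenvalues $e^{\pm i\theta}$ and hence complex determinant $1$; since $\det_* : \pi_1 U(2) \to \pi_1 U(1)$ is an isomorphism, the image of $\pi_1 SO(2)$ vanishes. (Were the map multiplication by $2$, the exact sequence would give $\pi_1 \Lambda_2 \iso \Z/2$, not $\Z$.) The paper sidesteps this computation entirely: it simply records $\pi_1 \Lambda_1 \iso \Z \iso \pi_1 \Lambda_2$ and uses that the stabilization map is already surjective on $\pi_1$ by the first part, so a surjective homomorphism $\Z \to \Z$ is automatically an isomorphism.
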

\begin{proof}
To prove the first statement, it suffices to prove that the embedding of the fibers $\Lambda_k \to \Lambda_{k+1}$ is $k$-connected.
To show this, recall that the embedding
$SO(k) \to SO(k+1)$ is $(k-1)$-connected
and that the embedding $U(k) \to U(k+1)$ is $2k$-connected.
The claim then follows by comparing the
exact sequences in homotopy of
the fiber bundles $SO(r) \hookrightarrow U(r) \to \Lambda_r$ for $r = k$ and $r= k+1$.

In the case $k=1$, we have that $\pi_1 \Lambda_1 \iso \Z \iso \pi_1\Lambda_2$.
Therefore, the stabilization map $\pi_1 \Lambda_1 \to \pi_1\Lambda_2$ is an isomorphism.
As above, this implies that the map $j:\Grass(E) \to \Grass( \C \oplus E)$ is also an isomorphism $\pi_1$.
\end{proof}

For $q \geq k$, we consider the stabilization $\C^{q-k} \oplus E$ of rank $2q$
and its Lagrangian Grassmannian $\Grass_q(E) = \Grass( \C^{q-k} \oplus E)$.
Let $\Grass_{\infty}(E)$ be the colimit
of the spaces $\Grass_{q}(E)$ along the stabilization maps.
Lemma \ref{lemma:stab_homotopy_Lag_grass} then yields the following stability property.
\begin{corollary}
\label{cor:inclusion_in_colimit_is_k_connected}
For a symplectic bundle $E$ of rank $2k$, 
the inclusion $\Grass(E) \hookrightarrow \Grass_{\infty}(E)$
is $k$-connected.
Moreover, for $k=1$ the inclusion is an isomorphism on $\pi_1$.
\end{corollary}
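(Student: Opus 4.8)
The plan is to deduce the corollary from Lemma \ref{lemma:stab_homotopy_Lag_grass} by iterating the stabilization maps and passing to the colimit. The key observation is that, for each $q \geq k$, the $q$-th map in the defining sequence of $\Grass_{\infty}(E)$,
\[
\Grass_q(E) = \Grass\bigl( \C^{q-k} \oplus E \bigr) \longrightarrow \Grass\bigl( \C^{q+1-k} \oplus E \bigr) = \Grass_{q+1}(E),
\]
is precisely the stabilization map $j$ of Lemma \ref{lemma:stab_homotopy_Lag_grass} applied to the symplectic bundle $\C^{q-k} \oplus E$, which has rank $2q$; note that for $q = k$ this reads $\Grass_k(E) = \Grass(E)$. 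Hence the $q$-th map is $q$-connected, so the maps in the sequence become more and more highly connected as one moves away from $\Grass(E)$.

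Next I would use that homotopy groups commute with sequential colimits along cofibrations: the stabilization maps are closed fiberwise embeddings of smooth manifolds, hence cofibrations (alternatively, one replaces $\Grass_{\infty}(E)$ by the homotopy-equivalent mapping telescope, into which each $\Grass_q(E)$ includes as a cofibration). This yields, for every choice of basepoint and every $i \geq 0$, a natural isomorphism
\[
\pi_i \Grass_{\infty}(E) \iso \colim_{q \geq k} \pi_i \Grass_q(E).
\]
The conclusion is then bookkeeping on this colimit system. If $i < k$, then for every $q \geq k$ the map $\pi_i \Grass_q(E) \to \pi_i \Grass_{q+1}(E)$ is an isomorphism, since $q$-connectedness gives an isomorphism on $\pi_i$ whenever $i < q$ and here $i < k \leq q$; therefore the composite $\pi_i \Grass(E) \to \pi_i \Grass_{\infty}(E)$ is an isomorphism. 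If $i = k$, the first map $\pi_k \Grass_k(E) \to \pi_k \Grass_{k+1}(E)$ is surjective by $k$-connectedness, while each later map $\pi_k \Grass_q(E) \to \pi_k \Grass_{q+1}(E)$ with $q > k$ is an isomorphism; hence $\pi_k \Grass(E) \to \pi_k \Grass_{\infty}(E)$ is surjective. The same reasoning applied to $\pi_0$ (each map is at least $0$-connected, and $1$-connected once $q \geq 1$) shows $\pi_0$ is preserved, so the inclusion $\Grass(E) \hookrightarrow \Grass_{\infty}(E)$ is $k$-connected.

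For the final assertion, take $k = 1$. The first map $\Grass_1(E) = \Grass(E) \to \Grass_2(E) = \Grass(\C \oplus E)$ induces an isomorphism on $\pi_1$ by the $k=1$ clause of Lemma \ref{lemma:stab_homotopy_Lag_grass}, and each subsequent map $\Grass_q(E) \to \Grass_{q+1}(E)$ with $q \geq 2$ is $q$-connected, hence induces an isomorphism on $\pi_1$. Passing to the colimit as above, $\Grass(E) \hookrightarrow \Grass_{\infty}(E)$ induces an isomorphism on $\pi_1$ (and on $\pi_0$). I expect no real difficulty here: the substance of the corollary is entirely contained in Lemma \ref{lemma:stab_homotopy_Lag_grass}, and the only point requiring a moment's care is the interchange of homotopy groups with the colimit, which is standard once the stabilization maps are recognized as cofibrations.
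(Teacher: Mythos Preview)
Your proposal is correct and is exactly the argument the paper has in mind; the paper states the corollary without proof, simply as an immediate consequence of Lemma \ref{lemma:stab_homotopy_Lag_grass}, and your write-up supplies the standard details (increasing connectivity along the sequence and commutation of $\pi_i$ with the colimit).
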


\subsection{Classifying spaces for trivializations of exact forms}
\label{subsection:class_spaces_trivs}

Next, we introduce classifying spaces for trivializations of exact forms.
In this section, we give an outline of the construction and state its main properties.
The precise construction and the proof of the properties are relegated to Appendix \ref{appendix:fibrations}.

For a manifold $M$ equipped with a closed $q$-form $\alpha$,
we shall construct a fibration $E^{\alpha} \to M$ with fiber $K(\R,q-1)$.
This fibration has 
the following property: for a manifold $L$ and a map $f:L \to M$,
there is a natural bijection between
trivializations of $f^*\alpha$
and homotopy classes of lifts of $f$ along the fibration $E^{\alpha} \to M$.

The basic idea is a familiar construction in algebraic topology:
the space $E^{\alpha}$ should be obtained from $M$ by
``killing'' the cohomology class of $\alpha$.
Namely, we
consider
a map $a:M \to K(\R,q)$
that represents the cohomology class $[\alpha] \in H^q(M; \R)$
and
define $E^{\alpha} \to M$
as the pullback by $a$ of the path fibration over $K(\R,q)$.
Note that this is the usual model for the homotopy fiber of $f$.

An issue with this idea is that the map $a$ is only defined up to homotopy, 
so that the fibration $E^{\alpha}$
is only defined up to (non-canonical) equivalence.
Moreover,
this construction is not manifestly functorial with respect to maps
$(M, \alpha) \to (N , \beta)$,
since homotopy-commutative diagrams do not induce well-defined maps on homotopy fibers.
We shall define a more precise model for the fibration $E^{\alpha}$
that overcomes these issues.

We now give a precise statement for the outcome of this construction.
For a fixed $q \geq 0$, 
consider the category $\Man$ 
whose objects are pairs $(M, \alpha)$, 
where $M$ is a manifold (possibly with boundary) and 
$\alpha$ is a closed $q$-form on $M$.
A morphism $(M, \alpha) \to (N, \beta)$ in $\Man$
is a smooth map $f: M \to N$ such that $f^*\beta = \alpha$.
We shall construct a functor $\Man \to \Top^{\Arr}$,
where $\Top^{\Arr}$ is the morphism category of $\Top$.
This functor will 
associate to
each object $(M, \alpha)$
a fibration $\vphi_{\alpha}: E^{\alpha} \to M$,
and to each morphism $(M, \alpha) \to (N, \beta)$ given by a map $f:M \to N$
a diagram
\begin{equation}
\label{eq:homotopy_pullback_square}
\begin{tikzcd}
E^{\alpha} \arrow{r} \arrow{d}[swap]{\vphi_{\alpha}}	&E^{\beta} \arrow{d}{\vphi_{\beta}} \\
M \arrow{r}{f}											& N
\end{tikzcd}
\end{equation}
The properties of this construction that we shall need are
summarized in the following proposition.
In the statement, we shall use the following notation.
For a fibration $E \to M$,
we denote by $\Sect(E)$ the set of homotopy classes of sections of $E$.
Moreover, for a map $f: L \to M$, we denote
by $\Lifts_f(E)$ the set of homotopy classes of lifts of $f$ to $E$.

\begin{proposition}
\label{prop:properties_of_fibration}
\hfill
\begin{enumerate}[label = (\roman*)]	 
	
	\item The fibration $E^{\alpha} \to M$
	is weakly equivalent to the homotopy fiber of
	a map $M \to K(\R, q)$ that classifies $[\alpha] \in H^q(M; \R)$.
	
	\item For a pair $(M, \alpha)$ such that $\alpha$ is exact,
	there is a natural bijection $\Triv(\alpha) \iso \Sect(E^{\alpha})$.
	
	\item For a pair $(M, \alpha)$
	and a smooth map $f: L \to M$ 
	such that $f^*\alpha$ is exact, there is a natural bijection
	 \[
	 \begin{tikzcd}
	 \Triv(f^*\alpha) \arrow{r}{\iso} & \Lifts_f(E^{\alpha}).
	 \end{tikzcd}
	 \]

\end{enumerate}
\end{proposition}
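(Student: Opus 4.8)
The plan is to build $E^{\alpha}$ as the mapping-path-space pullback of a path-space fibration over a fixed model of $K(\R,q)$, using a cochain-level construction so that the classifying map is genuinely --- not merely up to homotopy --- natural in the pair $(M,\alpha)$. Fix once and for all a functorial model $\mathcal{K}_q$ for $K(\R,q)$, for instance the realization of the simplicial abelian group $[n]\mapsto Z^q(\Delta^n;\R)$ of simplicial cocycles on the standard simplices, together with its path-space fibration $P\mathcal{K}_q\to\mathcal{K}_q$, an honest Serre fibration with contractible total space and fiber $\Omega\mathcal{K}_q\hmtpyeq K(\R,q-1)$. Given $(M,\alpha)$, integration over smooth singular simplices, $\sigma\mapsto\int_\sigma\alpha$, produces an honest smooth singular $q$-cocycle, hence a map $c_\alpha\colon M\to\mathcal{K}_q$; since this is manufactured from the form $\alpha$ itself rather than from its cohomology class, it is strictly natural in $(M,\alpha)$, so that a morphism $f\colon(M,\alpha)\to(N,\beta)$ gives $c_\alpha=c_\beta\circ f$ on the nose. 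Set $E^{\alpha}:=c_\alpha^{*}(P\mathcal{K}_q)$; it is a genuine fibration over $M$ with fiber $K(\R,q-1)$, the square \eqref{eq:homotopy_pullback_square} is then the evident map of pullbacks, and everything is functorial. Part (i) follows at once: $E^{\alpha}$ is by construction a model for the homotopy fiber of $c_\alpha$, and by the de Rham theorem the cocycle $\int\alpha$ represents $[\alpha]$ under the natural isomorphism $H^q_{\dR}(M)\iso H^q(M;\R)$, so $c_\alpha$ classifies $[\alpha]$.

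For part (ii), assume $\alpha$ exact. A section of $E^{\alpha}\to M$ is the same datum as a lift of $c_\alpha$ through $P\mathcal{K}_q\to\mathcal{K}_q$, i.e. a nullhomotopy of $c_\alpha$, with homotopies of sections corresponding to homotopies of nullhomotopies. A primitive $\lambda$ of $\alpha$ integrates, by Stokes' theorem, to a smooth singular $(q-1)$-cochain $I(\lambda)$ with $\delta I(\lambda)=I(\alpha)$, i.e. an explicit bounding cochain for the classifying cocycle --- precisely the cochain-level avatar of a nullhomotopy of $c_\alpha$. This defines a map $\Triv(\alpha)\to\Sect(E^{\alpha})$, and one checks it is a bijection by a torsor argument: $\Triv(\alpha)$ is a torsor over $H^{q-1}_{\dR}(M)$, $\Sect(E^{\alpha})$ is a non-empty torsor over $\pi_0\,\mathrm{Map}(M,\Omega\mathcal{K}_q)=H^{q-1}(M;\R)$, the map is equivariant through the degree-$(q-1)$ de Rham isomorphism (adding a closed $(q-1)$-form $\mu$ to $\lambda$ changes the bounding cochain by the cocycle $I(\mu)$), and cohomologous primitives yield homotopic sections since an exact form integrates to a coboundary. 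An equivariant map of torsors over isomorphic groups is a bijection, and each step is natural in $(M,\alpha)$, which gives (ii).

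Part (iii) then reduces formally to (ii). Given $f\colon L\to M$ with $f^{*}\alpha$ exact, the naturality of the construction gives $c_\alpha\circ f=c_{f^{*}\alpha}$, whence $f^{*}E^{\alpha}=(c_\alpha\circ f)^{*}(P\mathcal{K}_q)=E^{f^{*}\alpha}$ as fibrations over $L$. Homotopy classes of lifts of $f$ along $E^{\alpha}\to M$ are the same as homotopy classes of sections of $f^{*}E^{\alpha}$, so $\Lifts_f(E^{\alpha})=\Sect(E^{f^{*}\alpha})$, and applying (ii) to the pair $(L,f^{*}\alpha)$ produces the desired natural bijection $\Lifts_f(E^{\alpha})\iso\Triv(f^{*}\alpha)$.

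I expect the main obstacle to be none of (i)--(iii), which become soft once the model is set up, but the construction itself: one wants a fibration $E^{\alpha}\to M$ that is simultaneously a genuine fibration over $M$ (not just over a weakly equivalent replacement), strictly functorial in $(M,\alpha)$, and weakly equivalent to the homotopy fiber of a classifying map for $[\alpha]$. Reconciling these requirements --- in particular defining $c_\alpha$ on $M$ itself rather than on $\SingSm M$, which forces a functorial fibrant replacement of the counit $|\SingSm M|\to M$ --- is where the bulk of the work (carried out in the appendix) lies.
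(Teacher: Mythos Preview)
Your proposal is correct and tracks the paper's approach closely: the same simplicial model $Z(\R,q)$, the same cochain-level classifying map via integration over smooth simplices, the same pullback of the path fibration, and the same acknowledgement that the real work is the functorial fibrant replacement needed to pass from $|\SingSm M|$ to $M$. Your reductions for (i) and (iii) match the paper's.

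The one substantive difference is your argument for (ii). The paper establishes the bijection $\Triv(\alpha)\iso\Sect(E^{\alpha})$ by an explicit chain: de Rham primitives $\leftrightarrow$ simplicial primitives $\leftrightarrow$ simplicial lifts of $a$ $\leftrightarrow$ lifts of $|a|$ $\leftrightarrow$ sections of $F^{\alpha}$ $\leftrightarrow$ sections of $E^{\alpha}$, appealing to May's representability results at each step. Your torsor argument is a legitimate and slightly more conceptual alternative; it trades the explicit chain for the observation that both sides are torsors over $H^{q-1}$ and the map is equivariant. Either works.

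One small point of tension: in (iii) you write $f^{*}E^{\alpha}=E^{f^{*}\alpha}$ as an equality of fibrations over $L$. This would hold if $c_\alpha$ were an honest map $M\to\mathcal{K}_q$, but with the fibrant replacement step you yourself flag in the last paragraph, the square \eqref{eq:homotopy_pullback_square} is only a homotopy pullback, so you only get a weak equivalence $E^{f^{*}\alpha}\xrightarrow{\sim} f^{*}E^{\alpha}$ (this is exactly how the paper phrases it). This does not damage the argument, since a weak equivalence of fibrations over $L$ still induces a bijection on $\pi_0$ of sections, but you should state it that way.
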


The construction of the fibration and the proof of Proposition \ref{prop:properties_of_fibration}
are technical, but straightforward.
For this reason, they are relegated to Appendix \ref{appendix:fibrations}.
For the purpose of proving our main result, we shall only make use of the properties
stated in Proposition \ref{prop:properties_of_fibration}; 
the specific details of the construction will not be of importance to us.

\subsection{Definition of the stable vector bundle $\gamma$}
\label{subsection:def_gamma}

Suppose now that $(M, \omega)$ is a symplectic manifold of dimension $2n$.
For $k \geq n$, consider the stabilized symplectic vector bundle $\C^{k-n} \oplus TM$.
To simplify the notation, we shall write $\Grass_k = \Grass( \C^{k-n} \oplus TM )$.
Let 
$j_k: \Grass_k \to \Grass_{k+1}$
be the stabilization map and let
$\Grass_{\infty} = \colim_k \Grass_k$ be the stable Lagrangian Grassmannian of $M$.
Let $\gamma_{k} \to \Grass_{k}$ be the tautological bundle.
There is an obvious bundle map $\R \oplus \gamma_k \to \gamma_{k+1}$ covering
$j_k$, which yields a pullback diagram:
\[
\begin{tikzcd}
\R \oplus \gamma_{k} \arrow{r}\arrow{d} \arrow[dr, phantom, "\pb"]		&\gamma_{k+1} \arrow{d} \\
\Grass_{k} \arrow{r}[swap]{j_k}											&\Grass_{k+1}.
\end{tikzcd}
\]
We define 
a stable vector bundle $\gamma: \Grass_{\infty} \to BSO$
by the construction described in Section \ref{prelim:construction_stable_bundle}.
To do this, we must choose compatible classifying maps $\Grass_k \to BSO(k)$ for the tautological bundles $\gamma_k \to \Grass_k$.
An additional complication is that we would like
the map $\gamma: \Grass_{\infty} \to BSO$ to be canonically defined up
to a contractible choice.\footnote{This ensures that the
associated Thom spectrum is well-defined up to a contractible choice.} 
We now explain how to achieve this.

Let $\zeta_n \to BU(n)$
be the universal vector bundle.
This bundle has a canonical Hermitian structure.
Let $\Gamma_n \to BU(n)$ be the oriented Lagrangian Grassmannian
of $\zeta_n$.
This is a model for the universal bundle over $BU(n)$
with fiber $U(n)/SO(n)$.
Therefore, $\Gamma_n$ is a model for $BSO(n)$.
Henceforth, we shall use $\Gamma_n$ as our preferred model for $BSO(n)$
and simply write $\Gamma_n = BSO(n)$.

Choose an almost complex structure $J$ on $M$ compatible with $\omega$,
so that $TM$ becomes a Hermitian bundle.
Note that the space of such choices is contractible.
Choose a classifying map
\[
\begin{tikzcd}
TM \arrow{r} \arrow{d}	&\zeta_n \arrow{d} \\
M \arrow{r}				&BU(n)
\end{tikzcd}
\]
for $TM$. Note that this is also a contractible choice.
The classifying map induces a map of the Lagrangian Grassmannians $\Grass(TM) \to \Gamma_n$
which pulls back the tautological bundle over $\Gamma_n$ to the
tautological bundle over $\Grass(TM)$.
Hence $\Grass(TM) \to \Gamma_n = BSO(n)$ is a classifying map for 
the tautological bundle $\gamma_n \to \Grass(TM)$.

The above discussion extends naturally to the stabilized bundles
$\C^{k-n} \oplus TM$.
The choice of classifying map for $TM$ determines
a classifying map for $\C^{k-n} \oplus TM$.
Thus, as above we obtain classifying maps
\[
\Grass_k \to \Gamma_k = BSO(k)
\]
for the tautological bundles $\gamma_k$,
which are compatible with the stabilization maps
$\Grass_k \to \Grass_{k+1}$.
Passing to the colimit, we obtain a stable vector bundle $\gamma: \Grass_{\infty} \to BSO$.

Note that by definition we have a map of fibrations:
\[
\begin{tikzcd}
\Grass_{\infty} \arrow{r}{\gamma} \arrow{d}	 	&BSO \arrow{d} \\
M \arrow{r}							 			&BU
\end{tikzcd}
\]
where the map $M \to BU$
classifies the stable tangent bundle of $M$ and $BSO \to BU$ is the universal bundle with fiber $\Lambda_{\infty} \iso U/SO$.

\subsection{Definition of the stable vector bundle $\gamma^{\omega}$}
\label{subsection:def_gamma_omega}

The stable vector bundle $\gamma^{\omega}$
that we are interested in is obtained from $\gamma$ by adding the data of trivializations of the pullback of $\omega$,
i.e. by ``killing'' the cohomology class of $\omega$.
To construct $\gamma^{\omega}$,
consider the fibration $E^{\omega} \to M$ defined in Section \ref{subsection:class_spaces_trivs}.
Let $\pi : \Grass_{\infty} \to M$ be the projection
and define a fibration $\Grass^{\omega}_{\infty} \to \Grass_{\infty}$ as the following pullback:
\begin{equation}
\label{eq:def_grass_omega}
\begin{tikzcd}
\Grass^{\omega}_{\infty} \arrow{r}\arrow{d}  \arrow[dr, phantom, "\pb"]	& E^{\omega} \arrow{d}{\vphi_{\omega}} \\
\Grass_{\infty} \arrow{r}[swap]{\pi}										&M
\end{tikzcd}
\end{equation}
Finally, we define the stable bundle $\gamma^{\omega}: \Grass^{\omega}_{\infty} \to BSO$
as the composition
\[
\begin{tikzcd}
\Grass^{\omega}_{\infty} \arrow{r} & \Grass_{\infty} \arrow{r}{\gamma} & BSO.
\end{tikzcd}
\]

\section{Proof of the main theorem}
\label{section:proof_main_thm}

In this section, we give the proof of Theorem \ref{thm:main_thm}.
In Section \ref{subsection:iso_stable_group},
we construct a natural map $\Gform(M) \to \Omega_n( \gamma^{\omega})$
and show that it is an isomorphism by a stabilization argument.
Then, in Section \ref{subsection:end_of_proof},
we finish the proof of Theorem \ref{thm:main_thm}
by an application of the Pontrjagin-Thom Theorem.

\subsection{Proof of the isomorphism $\Gform(M) \iso \Omega_n( \gamma^{\omega})$}
\label{subsection:iso_stable_group}

Let $\Omega_n(\gamma^{\omega})$
be the $n$-th cobordism group of the stable vector bundle $\gamma^{\omega}$
constructed in Section \ref{subsection:def_gamma_omega}.
We have a natural map $\sigma: \Gform(M) \to \Omega_n(\gamma^{\omega})$
defined as follows.

Suppose that $\vphi: TL \to TM$ is a formal Lagrangian immersion covering a map $f: L \to M$,
equipped with a trivialization of $f^*\omega$.
By Lemma \ref{lemma:univ_prop_lag_grass},
the bundle map $\vphi$
corresponds naturally to a fiberwise isomorphism $TL \to \gamma_n$,
where $\gamma_n \to \Grass(M)$ is the tautological bundle.
As explained in Section \ref{prelim:construction_stable_bundle},
the map $TL \to \gamma_n$
naturally induces a $\gamma$-structure on $L$.

Moreover, by Proposition \ref{prop:properties_of_fibration}(iii),
the given trivialization of $f^*\omega$ determines a homotopy class of lifts of $f$ along
the fibration $E^{\omega} \to M$.
As $\Grass^{\omega}_{\infty}$ is defined as a pullback \eqref{eq:def_grass_omega},
the $\gamma$-structure on $L$ and
the lift to $E^{\omega}$ together
determine a canonical $\gamma^{\omega}$-structure on $L$.

Applying the same argument to cobordisms,
we conclude that there is a canonical map $\sigma: \Gform(M) \to \Omega_n(\gamma^{\omega})$.

\begin{proposition}
\label{prop:iso_tangential_cob_group}
The map $\sigma:\Gform(M) \to \Omega_n(\gamma^{\omega})$ is an isomorphism.
\end{proposition}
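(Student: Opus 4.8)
The plan is to show that $\sigma$ is both injective and surjective by relating formal Lagrangian immersions/cobordisms to $\gamma^\omega$-structures through a careful stabilization argument. The subtle point is that $\Gform(M)$ is defined using \emph{unstable} Lagrangian bundle maps $TL \to TM$ (together with their stabilizations $\stab\vphi$ appearing only in cobordisms), whereas $\Omega_n(\gamma^\omega)$ is a cobordism group of \emph{stable} tangential structures, i.e. elements of $[TL, \gamma] = \colim_k[\R^k\oplus TL, \gamma_{n+k}]$ together with a compatible lift to $E^\omega$. The bridge between the two is Corollary \ref{cor:inclusion_in_colimit_is_k_connected}: the inclusion $\Grass(\C^{k-n}\oplus TM) \hookrightarrow \Grass_\infty$ is $k$-connected, so for $L$ of dimension $n$ and cobordisms $V$ of dimension $n+1$, every stable structure and every stable homotopy of structures is represented, essentially uniquely, already at a finite stage. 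I would first record this as a lemma: for $\dim L = n$, the stabilization map
\[
\left[TL, \gamma_n\right] \to \left[TL, \gamma\right]
\]
(where $\gamma_n \to \Grass(TM)$ is the unstable tautological bundle) is a bijection, and similarly the analogous map for $(n{+}1)$-manifolds with boundary is a bijection; this follows by obstruction theory from the connectivity estimate, using that $[TL,\gamma_n]$ here means fiberwise-iso bundle maps, which by Lemma \ref{lemma:univ_prop_lag_grass} are the same as Lagrangian bundle maps $TL \to TM$.

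For \textbf{surjectivity}: given a $\gamma^\omega$-structure on a closed $n$-manifold $L$, its underlying $\gamma$-structure lies in $[TL,\gamma]$, which by the lemma comes from a genuine fiberwise-iso $TL \to \gamma_n$, hence by Lemma \ref{lemma:univ_prop_lag_grass} from a Lagrangian bundle map $\vphi:TL \to TM$ covering some $f:L\to M$. Since the $\gamma^\omega$-structure includes a lift of $L \to \Grass_\infty \to M$ to $E^\omega$, and the composite $L \to \Grass_\infty \to M$ is (homotopic to) $f$, Proposition \ref{prop:properties_of_fibration}(iii) converts this lift into a trivialization $\tau$ of $f^*\omega$ (in particular $[f^*\omega]=0$). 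The pair $(\vphi,\tau)$ is then a formal Lagrangian immersion with $\sigma[(\vphi,\tau)]$ equal to the given class, using that the stabilization comparison is compatible with the pullback square \eqref{eq:def_grass_omega} defining $\Grass^\omega_\infty$.

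For \textbf{injectivity}: suppose $\sigma[(\vphi,\tau)] = 0$, so there is a $\gamma^\omega$-structured cobordism $W$ (a $(n{+}1)$-manifold with $\bdry W = L$) whose boundary structure is the stabilization of the one induced by $(\vphi,\tau)$. By the lemma applied to $W$ (now using the $(k{+}1)$-connectivity at the relevant stage, or equivalently that the inclusion into $\Grass_\infty$ kills the obstruction groups in degrees $\le n+1$), the underlying $\gamma$-structure on $W$ is represented by a fiberwise-iso $TW \to \gamma_{n+1}$ over $\Grass(\C\oplus TM)$, hence by a Lagrangian bundle map $\Phi: TW \to \C\oplus TM$ covering some $F:W\to M$; one must check this can be arranged to restrict on $\bdry W$ to something bundle-homotopic to $\stab\vphi$ over $f$, which is where the collar-trivialization conventions $TW|_L \iso \R\oplus TL$ from Section \ref{subsection:cob_group_formal_lags} enter. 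The lift of $W \to \Grass_\infty \to M$ to $E^\omega$ provided by the $\gamma^\omega$-structure, fed through Proposition \ref{prop:properties_of_fibration}(iii), gives a trivialization of $F^*\omega$ restricting to $\tau$ (and to $0$) on the boundary. Thus $(\Phi,F)$ together with this trivialization is a formal Lagrangian cobordism witnessing $[(\vphi,\tau)] = 0$ in $\Gform(M)$.

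The main obstacle is the \textbf{stabilization/connectivity bookkeeping}: one must be careful that the unstable model $\gamma_n \to \Grass(TM)$ genuinely computes the same homotopy classes as the stable bundle $\gamma$ in the dimension range $n$ (for manifolds) and $n+1$ (for cobordisms), \emph{and} that this identification is compatible with the $E^\omega$-lift data throughout — i.e. that forming the pullback \eqref{eq:def_grass_omega} commutes with passing from the finite stage to the colimit on the nose enough to transport lifts. This is exactly the kind of argument the author alludes to by "a stabilization argument," and while each ingredient is standard obstruction theory, assembling them with the trivialization data attached requires some care. Everything else — the translation between Lagrangian bundle maps and fiberwise isomorphisms (Lemma \ref{lemma:univ_prop_lag_grass}), and between trivializations and lifts (Proposition \ref{prop:properties_of_fibration}(iii)) — is already packaged in the cited results.
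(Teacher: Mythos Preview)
Your proposal is correct and follows the same approach as the paper: destabilize $\gamma$-structures via the connectivity estimate (Lemma \ref{lemma:stab_homotopy_Lag_grass}/Corollary \ref{cor:inclusion_in_colimit_is_k_connected}) and translate $E^\omega$-lifts into trivializations via Proposition \ref{prop:properties_of_fibration}(iii), with the paper handling injectivity exactly as you outline by ``the same stabilization argument applied to cobordisms.'' One minor caveat: $n$-connectivity of $\Grass_n \hookrightarrow \Grass_\infty$ only yields \emph{surjectivity} of $[TL,\gamma_n]\to[TL,\gamma]$ for $\dim L = n$, not the bijection you assert --- but surjectivity is all the argument actually needs (and all the paper uses).
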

\begin{proof}
Suppose that $L$ is a manifold of dimension $n$ equipped with a 
$\gamma^{\omega}$-structure.
Represent this structure by a map $L \to \Grass^{\omega}_{\infty}$.
By composing with the projections in Diagram \eqref{eq:def_grass_omega}, we obtain the following data:
\begin{itemize}
	\item a map $f: L \to M$, which we may assume to be smooth,
	\item a lift of $f$ to the fibration $E^{\omega} \to M$,
	\item a $\gamma$-structure on $L$ covering $f$.
\end{itemize}

By Proposition \ref{prop:properties_of_fibration}(iii),
the lift of $f$ to $E^{\omega}$
determines a trivialization of $f^*\omega$.
Moreover,
as explained in Section \ref{prelim:construction_stable_bundle},
the $\gamma$-structure on $L$ is represented by a fiberwise isomorphism of vector bundles
\begin{equation}
\label{eq:bundle_map_to_tauto_bundle}
\R^{k-n} \oplus TL \to \gamma_k
\end{equation}
for a large enough $k$, where $\gamma_k \to \Grass_k$ is the tautological bundle.
By Lemma \ref{lemma:stab_homotopy_Lag_grass},
the stabilization map $\Grass(TM) \hookrightarrow \Grass_k$
is $n$-connected.
Since $L$ has dimension $n$, we deduce that the map \eqref{eq:bundle_map_to_tauto_bundle}
factors up to homotopy through a fiberwise isomorphism $TL \to \gamma_n$.
By composing with the canonical map $\gamma_n \to TM$, we obtain a Lagrangian bundle map $TL \to TM$ covering a map $g: L \to M$
homotopic to $f$.

The trivialization of $f^*\omega$ and the homotopy $f \sim g$ induces
a trivialization of $g^*\omega$.
Hence, we have constructed a formal Lagrangian immersion covering the map $g:L \to M$.
It is easy to see that this formal Lagrangian immersion maps to the original $\gamma^{\omega}$-structure
by the stabilization map $\sigma$.
This shows that $\sigma$ is surjective.

To prove that $\sigma$ is injective, apply the same stabilization argument to cobordisms.

\end{proof}

\subsection{End of the proof of the main theorem}
\label{subsection:end_of_proof}

\begin{proof}[Proof of Theorem \ref{thm:main_thm}]
By the Pontrjagin-Thom Theorem for tangential structures (Theorem \ref{thm:PT_tangential}), we have
$
\Omega_n(\gamma^{\omega}) \iso \pi_n \Th( - \gamma^{\omega}).
$
Combining this with the results of the previous sections, we then have a chain of
natural isomorphisms:
\[
\begin{tikzcd}[column sep = large, labels = {inner sep = 1ex}]
\Glag(M) 
\arrow{r}{\iso}[swap]{\text{Prop. \ref{prop:iso_formal_cob_group}}} 
&\Gform(M)
\arrow{r}{\iso}[swap]{\text{Prop \ref{prop:iso_tangential_cob_group}}} 
&\Omega_n(\gamma^{\omega})
\arrow{r}{\iso} 	
&\pi_n \Th(-\gamma^{\omega})
\end{tikzcd}
\]
\end{proof}

\subsection{Identification of $-\gamma^{\omega}$}

In this section, we give a more concrete description of the bundles $-\gamma$ and $-\gamma^{\omega}$.

\begin{proposition}
There is an equivalence of unoriented stable vector bundles over $\Grass_{\infty}(M)$
\[
-\gamma \iso \gamma \oplus \pi^*\nu_M,
\]
where $\nu_M$ is the stable normal bundle of $M$ and $\pi: \Grass_{\infty}(M) \to M$ is the projection.

\end{proposition}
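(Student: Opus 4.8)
The plan is to exhibit a canonical short exact sequence of vector bundles over each finite-stage Grassmannian $\Grass_k$ and then pass to the colimit. First I would recall that the tautological Lagrangian bundle $\gamma_k \to \Grass_k$ comes equipped with a tautological Lagrangian bundle map $\gamma_k \to \C^{k-n}\oplus TM$ covering the projection $\pi_k: \Grass_k \to M$. Since the target is a Hermitian (hence real) bundle of rank $2k$, and $\gamma_k$ sits inside it as a rank-$k$ subbundle, the fiberwise orthogonal complement gives a rank-$k$ subbundle $\gamma_k^{\perp} \to \Grass_k$ with a canonical isomorphism
\[
\gamma_k \oplus \gamma_k^{\perp} \iso \pi_k^*(\C^{k-n}\oplus TM).
\]
Now the key point is that for a Lagrangian subspace $V \subset W$ of a Hermitian space $W$, the complex structure $J$ gives an isomorphism $JV = V^{\perp}$, so $\gamma_k^{\perp} \iso \gamma_k$ as \emph{real} (unoriented) bundles — this is exactly where the statement becomes an equivalence of unoriented stable bundles. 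Substituting, we get $\gamma_k \oplus \gamma_k \iso \pi_k^*(\C^{k-n}\oplus TM)$, which rearranges to identify the stable normal data.

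Next I would translate this into the language of stable inverses. Recall $-\gamma$ is defined (Section \ref{subsection:classical_cobordism}) via the orthogonal-complement map $\iota: BSO \to BSO$, so that a bundle classified by $\gamma \oplus (-\gamma)$ is stably trivial; concretely, $-\gamma$ is represented at finite stages by the orthogonal complement of $\gamma_k$ inside a large trivial bundle. Using the isomorphism above together with the fact that $\pi_k^*TM \oplus \pi_k^*\nu_M$ is stably trivial (where $\nu_M$ is the stable normal bundle of $M$), I obtain stably
\[
\gamma_k \oplus \gamma_k \oplus \pi_k^*\nu_M \iso \underline{\R}^N
\]
for $N$ large. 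Comparing with the defining property of $-\gamma$, this exhibits $\gamma \oplus \pi^*\nu_M$ as a stable inverse of $\gamma$, hence $-\gamma \iso \gamma \oplus \pi^*\nu_M$ as unoriented stable vector bundles over $\Grass_{\infty}(M)$.

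The main obstacle I expect is bookkeeping the stabilization and compatibility with the colimit: one must check that the isomorphisms $\gamma_k^{\perp} \iso \gamma_k$ and $\gamma_k\oplus\gamma_k\iso\pi_k^*(\C^{k-n}\oplus TM)$ are compatible with the stabilization maps $\R\oplus\gamma_k \to \gamma_{k+1}$ (up to the obvious identification $\C = \R\oplus\R$ and the standard complex structure on the added $\C$-summand), so that they assemble to an equivalence over $\Grass_{\infty}(M) = \colim_k \Grass_k$. This is routine but requires care. A secondary point worth a sentence is the orientation issue: the isomorphism $JV \iso V^{\perp}$ does not respect the canonical orientations in general (multiplication by $J$ can reverse orientation depending on $k$), which is precisely why the proposition is stated only for unoriented stable bundles; I would remark on this rather than attempt to fix it. Everything else — homotopy invariance, the pullback descriptions — is immediate from the constructions in Section \ref{subsection:def_gamma}.
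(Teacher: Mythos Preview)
Your proposal is correct and follows essentially the same route as the paper: the paper phrases the key step as the complex isomorphism $\gamma \otimes \C \iso \pi^*E$ (since $\gamma$ is Lagrangian in $\pi^*E$), which as a real isomorphism is exactly your $\gamma_k \oplus J\gamma_k = \gamma_k \oplus \gamma_k^{\perp} \iso \pi_k^*(\C^{k-n}\oplus TM)$, and then passes to the colimit and takes stable inverses just as you do. Your remarks on orientation and stabilization compatibility are apt and match the caveats implicit in the paper's statement.
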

\begin{proof}
Suppose first that $E \to B$ is a symplectic vector bundle,
$\pi: \Grass(E) \to B$ is the oriented Lagrangian Grassmannian
and $\gamma \to \Grass(E)$ is the tautological bundle.
Then $\gamma$ is naturally a Lagrangian subbundle of $\pi^* E$. Hence, there is an isomorphism of complex vector bundles 
$\gamma \tens \C \iso \pi^*E$, where $E$ is equipped with a compatible complex structure. 
Therefore, there is an isomorphism of (unoriented) real vector bundles $\gamma \oplus \gamma \iso \pi^*E$.

We apply this to the stabilized bundles $E_k = \C^{k-n} \oplus TM$.
We then get isomorphisms $\gamma_k \oplus \gamma_k \iso \pi_k^*TM \oplus \C^{k-n}$ of real bundles
over $\Grass_k(M)$,
which are compatible with stabilization.
By passing to the colimit, we get an equivalence of stable vector bundles
$\gamma \oplus \gamma \iso \pi^*TM$,
where $\pi: \Grass_{\infty}(M) \to M$ is the projection.
Taking the inverse bundle, we get an equivalence
\[
-\gamma \iso \gamma - \pi^*TM \iso \gamma + \pi^*\nu_M.
\]

\end{proof}

As a consequence, by taking the pullback to $\Grass_{\infty}^{\omega}(M)$ we obtain an equivalence
\[
-\gamma^{\omega} \iso \gamma^{\omega} \oplus \wtilde{\pi}^*\nu_M,
\]
where $\wtilde{\pi}: \Grass^{\omega}_{\infty}(M) \to M$ is the projection.

\section{Computations}
\label{section:computations}

In this section, we carry out the computations whose results were
stated in Section \ref{subsection:intro_computations}.
In Section \ref{subsection:pi_1},
we compute $\pi_1 \Th( - \gamma^{\omega})$
for every symplectic manifold.
In Section \ref{subsection:monotone},
we consider the case of monotone symplectic manifolds
and give the proof of Proposition \ref{prop:monotone_case}.
Finally, in Section \ref{subsection:surfaces},
we finish the computation of $\Glag(\Sigma)$ for a symplectic surface $\Sigma$ (Theorem \ref{thm:computation_surface}).

\subsection{Computation of $\pi_1$}
\label{subsection:pi_1}

Let $\xi: B \to BSO$ be a stable vector bundle.
By the Thom isomorphism theorem, the spectrum $\Th (\xi)$ is connective, i.e. $\pi_k \Th(\xi) = 0$ for $k < 0$.
Moreover, if $B$ is path-connected then $\pi_0 \Th(\xi) \iso \Z$.

For the computation of $\pi_1 \Th ( -\gamma^{\omega})$, we
consider stable vector bundles satisfying the following assumption.

\begin{assumption}
\label{assumption:nontrivial_map_on_H2}
$B$ is path-connected and
$\xi:B \to BSO$ induces a non-zero map
$H_2(B; \Z) \to H_2( BSO; \Z) \iso \Z_2$.
\end{assumption}

\begin{proposition}
\label{prop:computation_pi1}
Let $\xi:B \to BSO$ be a stable vector bundle satisfying Assumption \ref{assumption:nontrivial_map_on_H2}.
Then the Hurewicz-Thom map 
$\pi_1 \Th( \xi ) \to H_1(B; \Z)$
is an isomorphism.
\end{proposition}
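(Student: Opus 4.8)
The plan is to compute $\pi_1 \Th(\xi)$ using the Thom spectrum's low-dimensional cell structure, via the Atiyah–Hirzebruch spectral sequence (or equivalently the Thom isomorphism and the Hurewicz theorem in a range). First I would recall that, writing $T = \Th(\xi)$, the Thom isomorphism gives $H_k(T; \Z) \iso H_k(B; \Z)$ for all $k$ (the Thom class lives in degree $0$ since $\xi$ is a \emph{stable} bundle, so there is no dimension shift). In particular, using that $B$ is path-connected, $H_0(T) \iso \Z$, $H_1(T) \iso H_1(B; \Z)$. Since $T$ is connective, the Hurewicz theorem for spectra gives $\pi_0 T \iso H_0 T \iso \Z$ and a surjection $\pi_1 T \to H_1 T \iso H_1(B;\Z)$ — this is the Hurewicz–Thom map in the statement — with kernel a quotient of a stable homotopy group of spheres contribution. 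So the content is precisely to show this map is \emph{injective}, i.e. that nothing survives from the $\pi_1 \mathbb{S} \iso \Z/2$ that could contribute in degree $1$.

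The key step is to analyze the relevant differential in the AHSS for $\pi_* T$. The $E^2$-page has $E^2_{p,q} = H_p(B; \pi_q \mathbb{S})$, and the two groups contributing to $\pi_1 T$ are $E^2_{1,0} = H_1(B;\Z)$ and $E^2_{0,1} = H_0(B; \Z/2) \iso \Z/2$. The edge map $\pi_1 T \to E^\infty_{1,0} \subseteq H_1(B;\Z)$ is the Hurewicz–Thom map, and it is an isomorphism if and only if $E^\infty_{0,1} = 0$, i.e. the term $\Z/2$ in position $(0,1)$ is killed. The only differential that can kill it is $d^2 \colon E^2_{2,0} = H_2(B; \Z) \to E^2_{0,1} = \Z/2$. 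This $d^2$ is — up to the Thom isomorphism identification — the dual/composite of the first nontrivial $k$-invariant of the sphere spectrum, namely composition with $\mathrm{Sq}^2$ (reduced mod $2$), so it is exactly the map $H_2(B;\Z) \to H_2(B; \Z/2) \xrightarrow{\mathrm{Sq}^2{}^*\text{-type}} \Z/2$ that records whether the Thom spectrum of $\xi$ has a nontrivial attaching map in this range — equivalently, whether $w_2$ of $\xi$ (pulled back from $w_2$ of the universal bundle over $BSO$) is detected. Concretely, $d^2$ on $E^2_{2,0}$ is identified with the map $H_2(B;\Z) \to H_2(BSO;\Z) \iso \Z/2$ induced by $\xi$. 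Here is where I would invoke Assumption \ref{assumption:nontrivial_map_on_H2}: it says precisely that this map is \emph{nonzero}, hence \emph{surjective} onto $\Z/2$, so $d^2$ hits the $(0,1)$ entry, forcing $E^\infty_{0,1} = 0$.

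Having killed $E^\infty_{0,1}$, the only remaining contribution to $\pi_1 T$ is $E^\infty_{1,0}$. I would then check that $E^2_{1,0} = H_1(B;\Z)$ receives no differentials ($d^2$ into it would come from $E^2_{3,-1} = 0$) and supports none that matter ($d^2 \colon E^2_{1,0} \to E^2_{-1,1} = 0$), so $E^\infty_{1,0} = H_1(B; \Z)$, giving $\pi_1 T \iso H_1(B;\Z)$ and identifying the isomorphism with the Hurewicz–Thom edge map. The main obstacle is the clean identification of the $d^2$ differential with the map on $H_2$ induced by $\xi \colon B \to BSO$: this requires pinning down that the first $k$-invariant of $\mathbb{S}$, transported through the Thom isomorphism, becomes exactly $w_2$-detection, which I would handle by naturality — comparing the AHSS for $\Th(\xi)$ with that for $\Th(\mathrm{id}_{BSO}) = MSO$ via the map of Thom spectra induced by $\xi$, and using the known fact that for $MSO$ the corresponding $E^\infty_{0,1}$ vanishes because $w_2$ is nonzero on $H_2(BSO)$. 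Everything else is bookkeeping with a first-quadrant spectral sequence in a two-column range.
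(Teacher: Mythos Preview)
Your argument via the Atiyah--Hirzebruch spectral sequence is correct, and it is a genuinely different route from the paper's. The paper instead works with the fiber sequence $F \to \Th(\xi) \to MSO$ induced by $\xi$: from $\pi_1 MSO = \pi_2 MSO = 0$ the long exact sequence in homotopy gives $\pi_0 F = 0$ and $\pi_1 F \iso \pi_1 \Th(\xi)$; Hurewicz then yields $\pi_1 F \iso H_1(F;\Z)$; finally the assumption makes $H_2(\Th(\xi);\Z) \to H_2(MSO;\Z) \iso \Z/2$ surjective, so the homology long exact sequence gives $H_1(F;\Z) \iso H_1(\Th(\xi);\Z) \iso H_1(B;\Z)$. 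The paper's approach is lighter on machinery---only exact sequences and the classical vanishing of $\pi_1 MSO$ and $\pi_2 MSO$---whereas yours makes transparent that the role of the assumption is precisely to force the differential $d^2\colon H_2(B;\Z) \to \Z/2$ to be onto, via its identification with evaluation of $w_2(\xi)$. Your proposed justification of that identification by naturality with $MSO$ is fine, but note that it implicitly uses $\pi_1 MSO = 0$ (to know the corresponding $d^2$ for $MSO$ is nonzero), so in the end both arguments rest on the same low-dimensional input about $MSO$.
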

\begin{proof}
We compute $\pi_1 \Th(\xi)$ using the Cartan-Serre method.
Let $\kappa: \Th(\xi) \to MSO$ be the map induced by $\xi: B \to BSO$.
This map preserves the Thom classes, hence it induces an isomorphism on $\pi_0$.
Let $F$ be the homotopy fiber of $\kappa$, so that there is a fiber sequence
\begin{equation}
\label{eq:fiber_seq}
\begin{tikzcd}
F \arrow{r} &\Th(\xi) \arrow{r}{\kappa} &MSO.
\end{tikzcd}
\end{equation}
Consider the first few terms of the long exact sequence of homotopy groups:
\[
\begin{tikzcd}[column sep = small]
\pi_2 MSO \arrow{r}	&\pi_1 F \arrow{r} &\pi_1 \Th( \xi) \arrow{r} &\pi_1 MSO \arrow{r} &\pi_0 F \arrow{r} &\pi_0 \Th ( \xi) \arrow{r} &\pi_0 MSO
\end{tikzcd}
\]
We have $\pi_2 MSO = \pi_1 MSO = 0$ and the map $\pi_0 \Th(\xi) \to \pi_0 MSO$ is an isomorphism. 
Hence we conclude that $\pi_0 F = 0$
and that $\pi_1 F \to \pi_1 \Th(\xi)$ is an isomorphism.

Since $\pi_0 F = 0$, the Hurewicz map $\pi_1 F \to H_1(F; \Z)$ is an isomorphism.
Therefore, to prove the proposition, it remains to show that the map
$H_1(F; \Z) \to H_1( \Th(\xi); \Z)$ is an isomorphism.
To see this, consider the long exact sequence in homology induced by \eqref{eq:fiber_seq}
\[
\begin{tikzcd}[column sep = small]
H_2( \Th(\xi); \Z) \arrow{r} & H_2(MSO; \Z) \arrow{r} &H_1(F; \Z) \arrow{r} &H_1(\Th(\xi); \Z) \arrow{r} &H_1(MSO; \Z).
\end{tikzcd}
\]
We have $H_1(MSO; \Z) \iso H_1( BSO; \Z)= 0$ since $BSO$ is simply-connected.
Moreover, $H_2(MSO; \Z) \iso H_2(BSO; \Z) \iso \Z_2$.
Finally, by Assumption \ref{assumption:nontrivial_map_on_H2},
the map $H_2( \Th(\xi); \Z) \to H_2(MSO; \Z)$ is surjective.
Hence, we conclude that $H_1(F; \Z) \to H_1( \Th(\xi); \Z)$ is an isomorphism.
\end{proof}

To apply Proposition \ref{prop:computation_pi1} to the bundle $-\gamma^{\omega}:\Grass_{\infty}^{\omega} \to BSO$,
we must check that it satisfies Assumption \ref{assumption:nontrivial_map_on_H2}.

\begin{proposition}
The bundle $-\gamma^{\omega}$ satisfies Assumption \ref{assumption:nontrivial_map_on_H2}.
\end{proposition}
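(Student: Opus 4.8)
The goal is to verify that $-\gamma^{\omega}: \Grass_{\infty}^{\omega} \to BSO$ satisfies Assumption \ref{assumption:nontrivial_map_on_H2}: that $\Grass_{\infty}^{\omega}$ is path-connected and that the induced map $H_2(\Grass_{\infty}^{\omega}; \Z) \to H_2(BSO; \Z) \iso \Z_2$ is nonzero. The key observation is that the map $H_2(B;\Z)\to H_2(BSO;\Z)\iso\Z_2$ classifying a stable bundle $\xi$ detects the second Stiefel–Whitney class $w_2(\xi)$, so the assumption is equivalent to saying $w_2(-\gamma^{\omega})\neq 0$ in $H^2(\Grass_{\infty}^{\omega};\Z_2)$. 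Since $w_2$ is additive and $w_2(\nu_M)=w_2(TM)$, we have $w_2(-\gamma^\omega)=w_2(\gamma^\omega)+\wtilde\pi^*w_2(\nu_M)$; and since $\gamma^\omega$ is pulled back from $\gamma\to\Grass_\infty$, which in turn pulls $TM$-data back along $\pi:\Grass_\infty\to M$, the whole computation reduces to understanding $w_2$ of the tautological bundle on a single-fiber Lagrangian Grassmannian.

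First I would reduce to path-connectedness: $\Grass_{\infty}^{\omega}$ fibers over $\Grass_\infty$ with fiber $K(\R,1)$, which is connected; $\Grass_\infty$ fibers over $M$ with fiber $\Lambda_\infty\iso U/SO$, which is connected; and $M$ is connected by assumption. So $\Grass_{\infty}^{\omega}$ is connected. Next I would handle the cohomology condition. It suffices to exhibit a single map from a space $X$ into $\Grass_{\infty}^{\omega}$ — or really just into $\Grass_\infty$, since the fiber of $\Grass_{\infty}^{\omega}\to\Grass_\infty$ is simply connected and hence (by the Serre spectral sequence) $H_2(\Grass_{\infty}^{\omega};\Z)\to H_2(\Grass_\infty;\Z)$ is surjective — on which $w_2(\gamma)$ (equivalently $w_2(-\gamma)$, since the discrepancy is $\pi^*w_2(\nu_M)$ which we can arrange to vanish on $X$) is nonzero. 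The natural candidate is $X=\Lambda_1\iso U(1)/SO(1)=S^1$ sitting inside a fiber of $\Grass(TM)\to M$ over a point where $TM$ is trivialized — wait, $\Lambda_1$ is one-dimensional, so $H_2$ is trivial there. Instead I would take $X=\Lambda_2\iso U(2)/SO(2)$, included as a fiber of $\Grass_{2}(E)\to\mathrm{pt}$ for a symplectic bundle $E$ of rank $4$ over a point; concretely, pick a point $m\in M$, trivialize $TM\oplus\C^{2-n}$ near $m$ as $\R^{4}$ with standard symplectic form (choosing $k=n$ if $n\geq 2$, else stabilizing), and use the inclusion $\Lambda_2\hookrightarrow\Grass_k(TM)\hookrightarrow\Grass_\infty$. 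On $\Lambda_2$ the tautological bundle restricts to the tautological Lagrangian bundle $\gamma\to\Lambda_2$, and one knows $H^2(\Lambda_2;\Z_2)\iso\Z_2$ with $w_2(\gamma)$ the nonzero class (this follows from the identification $\Lambda_2\simeq (S^1\times S^2)/{\sim}$ and the standard computation that the tautological Lagrangian bundle over $\Lambda_n$ carries the generator; alternatively, $\Lambda_n$ has $\pi_1\iso\Z$ detected by the Maslov class, and the obstruction to orienting / spin-structuring $\gamma$ is exactly $w_2$). Since the inclusion $\Lambda_2\hookrightarrow\Grass_\infty$ pulls back $w_1(\gamma)$ and $w_2(\gamma)$ from the stable bundle, and on $\Lambda_2$ the ambient $\pi^*\nu_M$ is trivial, we get $w_2(-\gamma)|_{\Lambda_2}=w_2(\gamma)|_{\Lambda_2}\neq 0$; pulling everything further back along the section-like splitting $\Grass_\infty\to\Grass_{\infty}^\omega$ isn't needed — rather, we use the surjection $H_2(\Grass_{\infty}^\omega)\to H_2(\Grass_\infty)$ to find a class mapping to the image of $[\Lambda_2]$, on which $w_2(-\gamma^\omega)$ evaluates nontrivially.

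The step I expect to require the most care is the identification $w_2(\gamma)\neq 0$ on some finite-type piece of $\Grass_\infty$, i.e. locating a concrete $2$-cycle detecting $w_2$ of the tautological Lagrangian bundle. The cleanest route is probably: $w_2(\gamma)$ vanishes iff $\gamma$ admits a spin structure; but $\gamma$ is classified by a map to $BSO$ lifting the map $\Grass_\infty\to BU$ that classifies $\pi^*TM$, and the relevant fibration $BSO\to BU$ has fiber $U/SO$ whose first $k$-invariant (the map $U/SO\to K(\Z_2,2)$) is nontrivial — indeed $\pi_1(U/SO)=\Z$, $\pi_2(U/SO)=\Z_2$, and $w_2$ of the tautological bundle over $U/SO$ restricts nontrivially to $\pi_2$. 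So I would argue that the composite $\Lambda_2\hookrightarrow\Lambda_\infty=U/SO\xrightarrow{w_2}K(\Z_2,2)$ is nontrivial on $\pi_2$, hence nonzero on $H_2(\Lambda_2;\Z)$ (which is $\Z$, generated by the Hurewicz image of $\pi_2$), which gives exactly what is needed. I would spell out that $\pi_2(U(2)/SO(2))\iso\Z_2$ from the fibration $SO(2)\to U(2)\to U(2)/SO(2)$ together with $\pi_2 U(2)=0$, $\pi_1 SO(2)=\Z$, $\pi_1 U(2)=\Z$, the map $\pi_1 SO(2)\to\pi_1 U(2)$ being multiplication by $2$; and that $w_2$ of the tautological bundle is the unique nontrivial element detecting this $\Z_2$. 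Everything else — additivity of $w_1,w_2$, naturality under pullback, and the spectral-sequence surjectivity — is routine.
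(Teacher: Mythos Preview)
Your overall strategy coincides with the paper's: both reduce the question to the non-vanishing of $w_2$ of the tautological bundle restricted to a fiber $\Lambda_\infty$ (or $\Lambda_2$) of the Grassmannian over $M$. However, your execution has a genuine gap and a side error.

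\textbf{The gap.} You assert that the fiber of $\Grass_\infty^\omega \to \Grass_\infty$ is simply connected and hence $H_2(\Grass_\infty^\omega;\Z)\to H_2(\Grass_\infty;\Z)$ is surjective. This is false: the fiber is $K(\R,1)$, which has $\pi_1=\R$. In the Serre spectral sequence the transgression $d_2:H_2(\Grass_\infty;\Z)\to H_1(K(\R,1);\Z)\iso\R$ is evaluation of $\pi^*[\omega]$, which is generally nonzero (this is precisely the content of Proposition~\ref{prop:computation_H1}), so the edge map on $H_2$ is \emph{not} surjective. The fix is immediate and is what the paper does: instead of lifting a homology class, lift the \emph{map} $\Lambda_\infty\hookrightarrow\Grass_\infty$ itself. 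Choosing any point of $E^\omega$ over $m\in M$ gives an inclusion $\Lambda_\infty\hookrightarrow\Grass_\infty^\omega$ (the fiber of $\Grass_\infty^\omega\to M$ is $\Lambda_\infty\times K(\R,1)$), and $\gamma^\omega$ restricted to this copy is the tautological bundle. The paper packages this as a map of fiber sequences over $M\to BU$ and then cites Audin for $w_2(\gamma)\neq 0$ on $\Lambda_\infty$, using universal coefficients (no $\Ext$ term) to pass from $H^2(-;\Z_2)$ to $H_2(-;\Z)$. Alternatively, your argument is salvageable by observing that the \emph{particular} class $[\Lambda_2]$ lies in the kernel of $d_2$ (since $\pi$ is constant on the fiber), but you would need to say this rather than invoke a false general surjectivity.

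\textbf{The side error.} Your computation $\pi_2(\Lambda_2)\iso\Z_2$ is incorrect. The inclusion $SO(2)\hookrightarrow U(2)$ lands in $SU(2)$ (a real rotation matrix has complex determinant $1$), so the map $\pi_1(SO(2))\to\pi_1(U(2))$ is zero, not multiplication by $2$; the long exact sequence then gives $\pi_2(\Lambda_2)\iso\Z$ (indeed $\Lambda_2\iso S^1\times S^2$). It is the \emph{stable} Grassmannian that has $\pi_2(\Lambda_\infty)=\pi_2(U/SO)\iso\Z_2$. This does not break your argument---you really want the non-vanishing of $w_2(\gamma)$ on $H_2(\Lambda_2;\Z)$, which holds because the $2$-connected map $\Lambda_2\to\Lambda_\infty$ surjects on $H_2$ and $w_2$ is the generator of $H^2(\Lambda_\infty;\Z_2)$---but the justification you gave is wrong.
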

\begin{proof}
Assumption \ref{assumption:nontrivial_map_on_H2} is invariant under
replacing a bundle $\xi$ by its inverse $- \xi$, since the
classifying maps of $\xi$ and $-\xi$
differ by a homotopy equivalence of $BSO$.
Hence, it
suffices to prove the assertion for $\gamma^{\omega}$.

Recall from Section \ref{subsection:def_gamma} 
that $\Grass_{\infty} \to M$ and $BSO \to BU$ are fibrations with fiber $\Lambda_{\infty}$.
Moreover, the classifying map $\gamma: \Grass_{\infty} \to BSO$
fits into a map of fibrations
\[
\begin{tikzcd}
\Grass_{\infty} \arrow{d} \arrow{r}{\gamma}	& BSO \arrow{d} \\
M \arrow{r}									&BU
\end{tikzcd}
\]
where $M \to BU$ classifies the stable tangent bundle of $M$.
After passing to the pullback by $E^{\omega} \to M$,
we obtain a map of fiber sequences
\begin{equation}
\label{cd:map_fiber_seq}
\begin{tikzcd}
\Lambda_{\infty} \times K(\R,1)  \arrow{d}{\proj} \arrow{r} &\Grass_{\infty}^{\omega} \arrow{r} \arrow{d}{\gamma^{\omega}} & M \arrow{d} \\
\Lambda_{\infty} \arrow{r}				 			& BSO \arrow{r}					 								&BU
\end{tikzcd}
\end{equation}
From Diagram \eqref{cd:map_fiber_seq},
we deduce that it suffices to prove that 
the map $\Lambda_{\infty} \to BSO$ is non-trivial on $H_2(- ; \Z)$.
To see this, note first that the map $\Lambda_{\infty} \to BSO$
pulls back the universal Stiefel-Whitney class $w_2 \in H^2(BSO; \Z_2)$
to the generator of $H^2(\Lambda_{\infty}; \Z_2) \iso \Z_2$; see e.g. \cite[1.6]{Audin-calculs}.
Moreover, $BSO$ is simply-connected
and $H_1(\Lambda_{\infty}; \Z) \iso \Z$,
so that in both cases there is no $\Ext$ term
in the universal coefficients sequence for $H^2(-; \Z_2)$.
Therefore, the non-vanishing of the map on $H^2(- ; \Z_2)$
implies the non-vanishing of the map on $H_2(-; \Z)$.
\end{proof}

\begin{corollary}
\label{cor:Hurewicz_map_is_an_iso}
The Hurewicz-Thom map $\pi_1 \Th ( - \gamma^{\omega} ) \to H_1( \Grass_{\infty}^{\omega}; \Z)$
is an isomorphism.
\end{corollary}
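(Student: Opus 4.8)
The plan is to obtain the corollary as an immediate consequence of the two results that precede it. Proposition \ref{prop:computation_pi1} asserts that whenever a stable vector bundle $\xi \colon B \to BSO$ satisfies Assumption \ref{assumption:nontrivial_map_on_H2}, the Hurewicz-Thom map $\pi_1 \Th(\xi) \to H_1(B;\Z)$ is an isomorphism; and the proposition immediately above it verifies that $-\gamma^{\omega} \colon \Grass^{\omega}_{\infty} \to BSO$ does satisfy Assumption \ref{assumption:nontrivial_map_on_H2}. So I would simply specialize Proposition \ref{prop:computation_pi1} to $\xi = -\gamma^{\omega}$ and $B = \Grass^{\omega}_{\infty}$, and read off the conclusion.

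For completeness I would recall why the two clauses of Assumption \ref{assumption:nontrivial_map_on_H2} hold. Path-connectedness of $\Grass^{\omega}_{\infty}$ follows from Diagram \eqref{eq:def_grass_omega}: it is the total space of a fibration over $M$ with fiber $\Lambda_{\infty} \times K(\R,1)$, and both the fiber and the base are connected (working componentwise on $M$ if necessary, or assuming $M$ connected). The non-triviality of $H_2(\Grass^{\omega}_{\infty};\Z) \to H_2(BSO;\Z) \iso \Z_2$ is exactly what was extracted in the preceding proof from the map of fiber sequences \eqref{cd:map_fiber_seq}, via the fact that $\Lambda_{\infty} \to BSO$ pulls back $w_2$ to the generator of $H^2(\Lambda_{\infty};\Z_2)$, together with a universal-coefficients argument in which no $\Ext$ term intervenes.

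There is essentially nothing left to do: the genuine content was already produced in the proof of Proposition \ref{prop:computation_pi1}, which runs the Cartan-Serre argument — form the homotopy fiber $F$ of $\kappa \colon \Th(\xi) \to MSO$, use $\pi_1 MSO = \pi_2 MSO = 0$ and the surjectivity of $H_2(\Th(\xi);\Z) \to H_2(MSO;\Z)$ provided by the assumption to see that $\pi_1 F \to \pi_1 \Th(\xi)$ is an isomorphism, and then apply the Hurewicz theorem, using $\pi_0 F = 0$, to identify $\pi_1 F$ with $H_1(F;\Z)$. Thus the only ``obstacle'', and it is a mild one, is the verification that $-\gamma^{\omega}$ meets Assumption \ref{assumption:nontrivial_map_on_H2}, which has just been carried out; the corollary is then pure bookkeeping.
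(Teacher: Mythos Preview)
Your proposal is correct and matches the paper's approach exactly: the corollary is stated without proof in the paper, as it follows immediately by specializing Proposition~\ref{prop:computation_pi1} to $\xi=-\gamma^{\omega}$ once the preceding proposition has verified Assumption~\ref{assumption:nontrivial_map_on_H2}. Your recollection of the Cartan--Serre argument and of the verification of the assumption is accurate (though for completeness, the last step in Proposition~\ref{prop:computation_pi1} also uses the homology long exact sequence to pass from $H_1(F;\Z)$ to $H_1(\Th(\xi);\Z)\iso H_1(B;\Z)$).
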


The homology $H_1( \Grass_{\infty}^{\omega}; \Z)$ can
be further computed using the Serre spectral sequence for the fibration $\Grass_{\infty}^{\omega} \to \Grass_{\infty}$
with fiber $K(\R,1)$.
Before doing this, we make the following definition.
Let $\pi: \Grass_{\infty}  \to M$
be the projection.
Consider the class $\pi^*[\omega] \in H^2(\Grass_{\infty}  ; \R)$.
Note that this is the characteristic class of the fibration $\Grass_{\infty}^{\omega} \to \Grass_{\infty}$.
We define the stable periods $\Per \subset \R$ of $\omega$ to be the image of the
evaluation map
\begin{equation}
\label{eq:evaluation_omega}
\pi^*[\omega]: H_2( \Grass_{\infty}  ; \Z) \to \R.
\end{equation}

\begin{proposition}
\label{prop:computation_H1}
$H_1(\Grass_{\infty}^{\omega}; \Z) \iso H_1( \Grass_{\infty}; \Z) \oplus \R/\Per$.
\end{proposition}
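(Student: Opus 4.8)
The plan is to run the Serre spectral sequence of the fibration $\Grass_{\infty}^{\omega} \to \Grass_{\infty}$ with fiber $K(\R,1)$ in low degrees. Since the fiber $K(\R,1)$ has $H_0 = \Z$, $H_1 = \R$, and $H_2 = 0$, and since $\Grass_{\infty}$ is path-connected with $\pi_1 \Grass_{\infty}$ acting trivially on $H_*(K(\R,1))$ (the coefficients $\R$ are a $\Q$-vector space, so this should follow from the structure of the fibration, or can be arranged), the relevant $E^2$-terms are $E^2_{p,0} = H_p(\Grass_{\infty};\Z)$ and $E^2_{p,1} = H_p(\Grass_{\infty};\R)$. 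The only differential affecting $H_1$ of the total space is $d_2 : E^2_{2,0} \to E^2_{0,1}$, i.e. $d_2 : H_2(\Grass_{\infty};\Z) \to \R$.

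First I would identify this $d_2$ with the evaluation map $\pi^*[\omega] : H_2(\Grass_{\infty};\Z) \to \R$ of \eqref{eq:evaluation_omega}. This is the standard fact that for a fibration classified by a map to $K(\R,2)$ (equivalently, a principal $K(\R,1)$-fibration with characteristic class $\pi^*[\omega] \in H^2(\Grass_{\infty};\R)$), the transgression $d_2$ on the fundamental class of the fiber is cap/evaluation against that characteristic class. By construction $\Grass_{\infty}^{\omega} \to \Grass_{\infty}$ is the pullback along $\pi$ of $E^{\omega} \to M$, which by Proposition \ref{prop:properties_of_fibration}(i) is the homotopy fiber of a map classifying $[\omega]$; hence its characteristic class is exactly $\pi^*[\omega]$, and its image under evaluation is by definition $\Per$.

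Then I would read off the filtration on $H_1(\Grass_{\infty}^{\omega};\Z)$. After the $d_2$ differential, $E^{\infty}_{1,0} = H_1(\Grass_{\infty};\Z)$ (no differential into or out of this spot in the relevant range, since $d_2 : E^2_{3,-1} \to E^2_{1,0}$ has trivial source and $d_2 : E^2_{1,0} \to E^2_{-1,1}$ has trivial target), and $E^{\infty}_{0,1} = \operatorname{coker}(d_2) = \R/\Per$. The short exact sequence of the filtration reads
\[
0 \to \R/\Per \to H_1(\Grass_{\infty}^{\omega};\Z) \to H_1(\Grass_{\infty};\Z) \to 0.
\]
To conclude that this sequence splits, I would use that the section $M \to E^{\omega}$ given by the zero trivialization (available since $\omega$ need not be exact on $M$, but is exact on... — here I should instead argue via the section of $\Grass_{\infty}^{\omega} \to \Grass_{\infty}$, or note that the quotient $H_1(\Grass_{\infty};\Z)$ is a free abelian group, so $\operatorname{Ext}^1(H_1(\Grass_{\infty};\Z), \R/\Per)$ vanishes and the extension splits automatically); in fact $H_1(\Grass_{\infty};\Z)$ being the $H_1$ of a space is not automatically free, so the cleaner route is to exhibit a splitting of the fibration up to the relevant skeleton, but the safest is: since the extension is by a divisible (hence injective) abelian group $\R/\Per$, every such extension splits. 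This gives the claimed isomorphism.

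The main obstacle I anticipate is the clean identification of $d_2$ with the evaluation of $\pi^*[\omega]$ together with the triviality of the local coefficient system; both are standard but require care because the fiber is the non-standard Eilenberg–MacLane space $K(\R,1)$ rather than $K(\Z,1)$. The triviality of the monodromy should follow from the fact that $\Grass_{\infty}^{\omega} \to \Grass_{\infty}$ is (weakly) a principal fibration for the topological abelian group $K(\R,1)$, whose action on its own homology is trivial; and the transgression statement is the classical computation of the Serre spectral sequence of such a principal fibration. Once these two points are in place, the rest is bookkeeping with the five-term exact sequence and the divisibility splitting argument.
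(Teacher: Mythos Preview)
Your proposal is correct and follows essentially the same route as the paper: run the Serre spectral sequence of the $K(\R,1)$-fibration $\Grass_{\infty}^{\omega}\to\Grass_{\infty}$, identify the transgression $d_2:H_2(\Grass_{\infty};\Z)\to\R$ with evaluation of the characteristic class $\pi^*[\omega]$ (so its image is $\Per$), obtain the short exact sequence $0\to\R/\Per\to H_1(\Grass_{\infty}^{\omega};\Z)\to H_1(\Grass_{\infty};\Z)\to 0$, and split it using divisibility of $\R/\Per$. The paper handles orientability by noting the fibration is pulled back from one over the simply-connected space $K(\R,2)$, which is a slightly cleaner way to dispose of the monodromy issue than your principal-fibration argument, but otherwise the arguments coincide.
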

\begin{proof}
Consider the $E^2$ page of the
Serre spectral sequence of the fibration $\Grass_{\infty}^{\omega} \to \Grass_{\infty}$.
Note that this fibration is orientable since it is pulled back from a fibration over $K(\R,2)$.
The only relevant differential for the computation of $H_1$
is the transgression $d^2_{2,0}: E^2_{2,0} \to E^2_{0,1}$.
We have $E^2_{2,0} \iso H_2( \Grass_{\infty}; \Z)$
and $E^2_{0,1} \iso H_1(K(\R,1); \Z) \iso \R$.
Moreover, the transgression $d^2_{2,0}$ is identified
with the evaluation of the characteristic class, i.e. with the map \eqref{eq:evaluation_omega}.

It follows that the image of $d^2_{2,0}$ is identified with the stable periods $\Per$.
The Serre spectral sequence then yields an exact sequence
\[
\begin{tikzcd}
0 \arrow{r} &\R/\Per \arrow{r} & H_1( \Grass_{\infty}^{\omega} ; \Z) \arrow{r}	&H_1( \Grass_{\infty}; \Z) \arrow{r} &0.
\end{tikzcd}
\]
Since $\R/\Per$ is divisible,
this exact sequence splits and we conclude that $H_1( \Grass_{\infty}^{\omega} ; \Z) \iso H_1( \Grass_{\infty}; \Z) \oplus \R/\Per$.
\end{proof}

\subsection{Monotone symplectic manifolds}
\label{subsection:monotone}

Recall that we denote by $\Grass_k M = \Grass(\C^{k-n} \oplus TM)$
the stabilized Lagrangian Grassmannians
and by $\pi_k: \Grass_k M \to M$
the projections.

\begin{lemma}
\label{lemma:pullback_exact_for_monotone_manifold}
Let $(M, \omega)$ be a monotone manifold.
Then for every $k \geq n$,
the form $\pi_k^* \omega$ is exact.
\end{lemma}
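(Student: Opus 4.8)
The statement to prove is Lemma \ref{lemma:pullback_exact_for_monotone_manifold}: if $(M,\omega)$ is monotone, then $\pi_k^*\omega$ is exact on $\Grass_k M$ for every $k \geq n$. The plan is to exploit the monotonicity relation $[\omega] = \tau\, c_1(M,\omega)$ together with the fact that $c_1$ pulls back to something cohomologically trivial on the Lagrangian Grassmannian. Recall that the fiber of $\pi_k : \Grass_k M \to M$ is $\Lambda_{k} \iso U(k)/SO(k)$, and the key classical fact is that the first Chern class of a complex bundle $E$ becomes $2$-torsion (indeed, becomes the mod-$2$ reduction of the Maslov-type generator) after pulling back to the Lagrangian Grassmannian of $E$; more to the point, $c_1$ pulled back to $\Grass(E)$ is divisible by $2$ and, after the identification $\gamma \otimes \C \iso \pi^*E$ from the proposition identifying $-\gamma$, one has $\pi_k^* c_1(TM) = 2\, c_1(\gamma_k) = 0$ in $H^2(\Grass_k M; \R)$ since $\gamma_k$ is a \emph{real} bundle (its complexification is $\pi_k^*(\C^{k-n} \oplus TM)$, so $c_1$ of that complexification is $2$ times the Euler/first Pontryagin-type class of $\gamma_k$, which vanishes rationally because $\gamma_k$ is real — a real bundle has $p_1$ but $c_1$ of its complexification is $2$-torsion).

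First I would make precise the assertion that $\pi_k^*c_1(TM) = 0$ in $H^2(\Grass_k M; \R)$. Using the isomorphism $\gamma_k \otimes \C \iso \pi_k^*(\C^{k-n}\oplus TM)$ of complex vector bundles (established in the proof of the proposition identifying $-\gamma$ in the preceding section), we get $\pi_k^* c_1(TM) = c_1(\pi_k^*(\C^{k-n}\oplus TM)) = c_1(\gamma_k \otimes \C)$. For any real vector bundle $V$, the complexification satisfies $c_{2i+1}(V\otimes\C) = 0$ in rational (indeed, $2$-torsion in integral) cohomology; in particular $c_1(\gamma_k\otimes\C) = 0$ in $H^2(-;\R)$. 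Hence $\pi_k^*c_1(TM) = 0$ over $\R$, and therefore, by monotonicity, $\pi_k^*[\omega] = \tau\, \pi_k^* c_1(M,\omega) = 0$ in $H^2(\Grass_k M; \R)$. Since $\pi_k^*\omega$ is a closed form whose de Rham cohomology class vanishes, it is exact.

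The main obstacle — really the only subtlety — is being careful about coefficients and the precise normalization of $c_1(M,\omega)$. Monotonicity is a statement in $H^2(M;\R)$, so I only need $\pi_k^*c_1(TM)$ to vanish \emph{rationally} (equivalently over $\R$), not integrally; the $2$-torsion phenomenon over $\Z$ is irrelevant once we tensor with $\R$. I should also confirm that the complex-bundle isomorphism $\gamma_k \otimes \C \iso \pi_k^*(\C^{k-n}\oplus TM)$ used here is exactly the one already proved (it is: $\gamma_k$ sits inside $\pi_k^*(\C^{k-n}\oplus TM)$ as a Lagrangian subbundle, and a Lagrangian subbundle $\Lambda$ of a Hermitian bundle $W$ satisfies $\Lambda\otimes\C \iso W$ via $v\otimes i \mapsto Jv$). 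With that in hand the argument is a one-line Chern-class computation, so no lengthy calculation is needed.
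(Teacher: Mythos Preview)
Your proposal is correct and follows essentially the same route as the paper: use the isomorphism $\gamma_k \otimes \C \iso \pi_k^*(\C^{k-n}\oplus TM)$ to rewrite $\pi_k^*c_1(TM)$ as $c_1(\gamma_k\otimes\C)$, observe that this vanishes, and then apply monotonicity. The only cosmetic difference is the justification for $c_1(\gamma_k\otimes\C)=0$: the paper invokes the fact that $\gamma_k$ is \emph{oriented} (so $c_1$ of its complexification vanishes integrally, via $c_1(V_\C)=\beta(w_1(V))$), whereas you use the weaker but sufficient fact that odd Chern classes of any complexification are $2$-torsion and hence vanish over $\R$. Either works here since monotonicity is a statement in real cohomology. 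One small cleanup: in your plan paragraph the expression ``$\pi_k^*c_1(TM)=2\,c_1(\gamma_k)$'' is not meaningful as written ($\gamma_k$ is real, not complex); your second paragraph already says the right thing, so just drop that phrasing.
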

\begin{proof}
The proof is the same as the proof of \cite[Corollary A.3]{DRF23},
which we recall for completeness.

For a symplectic vector bundle $E \to B$,
the tautological bundle $\gamma \to \Grass(E)$
is a Lagrangian subbundle of the pullback $\pi^*E$, where $\pi: \Grass(E) \to B$
is the projection.
This gives an isomorphism of complex vector bundles $\pi^*E \iso \gamma \tens \C$.
In turn, this implies that
\[
\pi^* c_1(E) = c_1( \pi^*E) = c_1 (\gamma \tens \C) = 0,
\]
since $\gamma$ is oriented.

Suppose now that $M$ is a monotone symplectic manifold.
Then $[\omega] = \tau  \, c_1(TM)$, hence we compute:
\[
\pi_k^* [\omega] = \tau \, \pi_k^* c_1(TM)= \tau \, \pi_k^* c_1(\C^{k-n} \oplus TM) = 0.
\]
\end{proof}

We now give the proof of the splitting $\Th(-\gamma^{\omega}) \hmtpyeq \Th(-\gamma) \Smash K(\R,1)_+$ 
for a monotone manifold.

\begin{proof}[Proof of Proposition \ref{prop:monotone_case}]
Let $\pi: \Grass_{\infty} M \to M$
be the projection of the stable Lagrangian Grassmannian.
By Lemma \ref{lemma:pullback_exact_for_monotone_manifold},
we have $\pi^*[\omega] = 0 \in H^2(\Grass_{\infty} M; \R)$.
Therefore,
the pullback by $\pi$ of the fibration $E^{\omega} \to M$
is fiber-homotopically trivial.
Choose a trivialization $\Grass_{\infty}^{\omega}M \hmtpyeq \Grass_{\infty}M \times K(\R,1)$
as fibrations over $\Grass_{\infty} M$.
This induces a homotopy equivalence of stable vector bundles
\[
\begin{tikzcd}
\Grass_{\infty}^{\omega}M \arrow{d}[swap]{\gamma^{\omega}} \arrow{r}{\hmtpyeq}	& \Grass_{\infty} M \times K(\R,1) \arrow{dl}{\gamma \times *} \\
BSO
\end{tikzcd}
\]
Hence,
there is an induced stable equivalence of Thom spectra
\[
\Th(\gamma^{\omega}) \hmtpyeq \Th(\gamma) \Smash K(\R,1)_+.
\]
Taking the inverse bundle, this gives an equivalence $\Th(-\gamma^{\omega}) \hmtpyeq \Th(-\gamma) \Smash K(\R,1)_+$.
\end{proof}

The splitting $\Th(-\gamma^{\omega}) \hmtpyeq \Th(-\gamma) \Smash K(\R,1)_+$,
combined with Theorem \ref{thm:main_thm}, gives an isomorphism
\begin{equation}
\label{eq:iso_homology_K(R,1)}
\Glag(M) \iso H_n( K(\R,1) ; \Th(-\gamma))
\end{equation}
for any monotone manifold of dimension $2n$.
The homology group appearing on the
right-hand side of Equation \eqref{eq:iso_homology_K(R,1)}
can be further computed
using the Atiyah-Hirzebruch spectral sequence.
The following computation is a straightforward generalization
of a result of Audin (see \cite[Théorème 2.1]{Audin-thesis}
or \cite[Theorem 1]{Audin-cotangent}).

\begin{proposition}
\label{prop:iso_tensor_product}
For a monotone symplectic manifold $M$ of dimension $2n$,
there is an isomorphism
\[
\Glag(M) \iso \bigoplus_{q = 0}^n \; H_{n-q}( K(\R,1); \Z) \tens \pi_q \Th(-\gamma).
\]
\end{proposition}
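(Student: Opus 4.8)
The plan is to start from the isomorphism $\Glag(M) \iso H_n(K(\R,1); \Th(-\gamma))$ of Equation \eqref{eq:iso_homology_K(R,1)} — that is, $\Glag(M) \iso \pi_n\bigl(\Th(-\gamma) \Smash K(\R,1)_+\bigr)$ — and to evaluate the right-hand side with the Atiyah--Hirzebruch spectral sequence
\[
E^2_{p,q} = H_p\bigl(K(\R,1); \pi_q \Th(-\gamma)\bigr) \;\Longrightarrow\; H_{p+q}\bigl(K(\R,1); \Th(-\gamma)\bigr).
\]
The first step is to identify the $E^2$-page. Since $\Th(-\gamma)$ is connective (Thom isomorphism), $E^2_{p,q} = 0$ for $q < 0$. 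Moreover $H_*(K(\R,1);\Z)$ is the exterior algebra $\bigwedge^{*}_{\Z} \R$, so it is torsion-free and its positive-degree part is a graded $\Q$-vector space; the universal coefficient theorem then gives $E^2_{p,q} \iso H_p(K(\R,1);\Z) \tens \pi_q \Th(-\gamma)$ with no $\Tor$-term. Thus the target group already has the desired associated graded, and it remains to prove that the spectral sequence degenerates and that the extensions are trivial.

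The crux of the argument is the observation that the reduced suspension spectrum $\Sigma^{\infty} K(\R,1)$ is a \emph{rational spectrum}. Indeed, since $\widetilde H_*(K(\R,1);\Z)$ is a graded $\Q$-vector space, the universal coefficient theorem shows that $\widetilde H_*(K(\R,1);\Z/p) = 0$ for every prime $p$; a connective spectrum with vanishing mod-$p$ homology for all $p$ has all homotopy groups $\Q$-vector spaces, hence is an $H\Q$-module, and therefore splits as a wedge of shifted Eilenberg--MacLane spectra whose homotopy groups coincide with its integral homology. Explicitly, $\Sigma^{\infty} K(\R,1) \hmtpyeq \bigvee_{k \geq 1} \Sigma^{k} H\bigl(H_k(K(\R,1);\Z)\bigr)$. (Alternatively, one can avoid this lemma by writing $K(\R,1)$ as the filtered colimit of the classifying spaces $BA$ of the finitely generated subgroups $A \leq \R$: each such $A$ is free abelian, so $BA$ is a torus and $\Sigma^{\infty}_+ BA$ is a wedge of spheres, and one then passes to the colimit since $\Th(-\gamma)$-homology commutes with filtered colimits.) This is the step I expect to be the main obstacle; the remaining arguments are formal.

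Finally, I would assemble the answer. Smashing the splitting with $\Th(-\gamma)$ and accounting for the disjoint basepoint gives
\[
\Th(-\gamma) \Smash K(\R,1)_+ \hmtpyeq \Th(-\gamma) \;\vee\; \bigvee_{k \geq 1} \Sigma^{k}\Bigl( \Th(-\gamma) \Smash H\bigl(H_k(K(\R,1);\Z)\bigr)\Bigr),
\]
so that $H_n(K(\R,1);\Th(-\gamma)) \iso \pi_n \Th(-\gamma) \;\oplus\; \bigoplus_{k \geq 1} H_{n-k}\bigl(\Th(-\gamma); H_k(K(\R,1);\Z)\bigr)$. For each $k$, because $H_k(K(\R,1);\Z)$ is a $\Q$-vector space, the universal coefficient theorem for the homology of the spectrum $\Th(-\gamma)$ and the rational Hurewicz theorem (valid since $\Th(-\gamma)$ is connective) combine to give
\[
H_{n-k}\bigl(\Th(-\gamma); H_k(K(\R,1);\Z)\bigr) \iso \pi_{n-k}\Th(-\gamma) \tens H_k(K(\R,1);\Z).
\]
Reindexing by $q = n - k$ — the terms with $q < 0$ vanish by connectivity of $\Th(-\gamma)$, and the summand $\pi_n \Th(-\gamma)$ coming from the basepoint is the $q = n$ term since $H_0(K(\R,1);\Z) = \Z$ — this yields the claimed isomorphism $\Glag(M) \iso \bigoplus_{q=0}^n H_{n-q}(K(\R,1);\Z) \tens \pi_q \Th(-\gamma)$.
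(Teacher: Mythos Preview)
Your proof is correct and its overall strategy coincides with the paper's: both start from the identification $\Glag(M)\iso H_n(K(\R,1);\Th(-\gamma))$ and exploit that $H_p(K(\R,1);\Z)$ is a $\Q$-vector space for $p>0$, so that the universal coefficient theorem gives $E^2_{p,q}\iso H_p(K(\R,1);\Z)\tens\pi_q\Th(-\gamma)$. The paper then finishes in one line by invoking the rational triviality of the Atiyah--Hirzebruch spectral sequence (citing Audin's argument for cotangent bundles), whereas you take a more explicit route: you show that $\Sigma^\infty K(\R,1)$ is a rational spectrum and hence splits as a wedge of shifted Eilenberg--MacLane spectra, then compute $\pi_n$ of the smash product directly. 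Your approach has the advantage of being self-contained and of making both the degeneration and the absence of extension problems transparent---the splitting of $\Sigma^\infty K(\R,1)$ yields the direct-sum decomposition on the nose, rather than as the associated graded of a filtration. The paper's argument is shorter but leans on a reference; in particular it does not spell out why the extensions in the AHSS filtration are trivial, which your splitting handles automatically.
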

\begin{proof}
The proof is the same as the proof of Theorem 1 of \cite{Audin-cotangent}.
One considers the Atiyah-Hirzebruch spectral sequence
with $E^2$ page 
\[
E^2_{p,q} = H_p( K(\R,1); \pi_q \Th(-\gamma)).
\]
Since the homology of $K(\R,1)$ over
$\Z$ consists of rational vector spaces in positive degrees,
there is no $\Tor$ term in the universal coefficients sequence, so that
\[
H_p( K(\R,1); \pi_q \Th(-\gamma)) \iso H_p(K(\R,1); \Z) \tens_{\Z} \pi_q \Th(-\gamma).
\]
Moreover, since the Atiyah-Hirzebruch spectral sequence is trivial over $\Q$,
the spectral sequence collapses at the $E^2$ page and the result follows.
\end{proof}

\subsection{Surfaces}
\label{subsection:surfaces}

\begin{lemma}
\label{lemma:computation_periods_surfaces}
For a closed symplectic surface $(\Sigma, \omega)$ of genus $g$,
we have
\[
\R/\Per \iso 
\begin{cases}
\R &\text{if } g \neq 1, \\
\R/\Z &\text{if } g = 1.
\end{cases}
\]
\end{lemma}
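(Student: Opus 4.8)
The plan is to evaluate the stable periods $\Per$ directly from their definition as the image of the map \eqref{eq:evaluation_omega}, i.e.\ of $\pi^*[\omega]\colon H_2(\Grass_\infty(\Sigma);\Z)\to\R$, where $\pi\colon\Grass_\infty(\Sigma)\to\Sigma$ is the projection. By naturality of the evaluation pairing, $\langle\pi^*[\omega],\alpha\rangle=\langle[\omega],\pi_*\alpha\rangle$ for every $\alpha\in H_2(\Grass_\infty(\Sigma);\Z)$, so $\Per$ is the image under $[\omega]\colon H_2(\Sigma;\Z)\to\R$ of the subgroup $\pi_*H_2(\Grass_\infty(\Sigma);\Z)\subseteq H_2(\Sigma;\Z)$. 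As $\Sigma$ is a closed surface, $H_2(\Sigma;\Z)\iso\Z$ is generated by $[\Sigma]$ and $A:=\langle[\omega],[\Sigma]\rangle=\int_\Sigma\omega\neq0$ (since $\omega$ is an area form). Hence $\Per=0$, so $\R/\Per\iso\R$, if $\pi_*$ vanishes on $H_2$; and $\Per=A\,\Z$, so $\R/\Per=\R/A\Z\iso\R/\Z$, if $\pi_*$ is onto $H_2(\Sigma;\Z)$. It therefore suffices to prove that $\pi_*H_2(\Grass_\infty(\Sigma);\Z)$ is $0$ when $g\neq1$ and all of $H_2(\Sigma;\Z)$ when $g=1$.

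The second step reduces this from $\Grass_\infty(\Sigma)$ to the unit tangent bundle $S\Sigma$, which is identified with $\Grass_1(\Sigma)=\Grass(T\Sigma)$. The stabilization inclusion $S\Sigma\hookrightarrow\Grass_\infty(\Sigma)$ is a map of fibrations over $\Sigma$ that restricts on fibers to $\Lambda_1=S^1\hookrightarrow\Lambda_\infty$; by Corollary \ref{cor:inclusion_in_colimit_is_k_connected} this fiber inclusion is an isomorphism on $\pi_1$, hence on $H_1(-;\Z)\iso\Z$, and it is $\pi_1(\Sigma)$-equivariant. In the homology Serre spectral sequences of the two fibrations, the edge homomorphism to the base in total degree $2$ is $\pi_*$, and its image is $E^\infty_{2,0}=\ker\bigl(d^2\colon E^2_{2,0}\to E^2_{0,1}\bigr)$, with $E^2_{2,0}=H_2(\Sigma;\Z)$ and $E^2_{0,1}=H_0(\Sigma;\mathcal H_1(\mathrm{fiber}))$. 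The morphism of spectral sequences induced by the inclusion is the identity on $E^2_{2,0}$, is an isomorphism on $E^2_{0,1}$ (being induced by the $\pi_1(\Sigma)$-equivariant isomorphism on $H_1$ of the fiber), and commutes with $d^2$; so the two differentials $d^2$ have the same kernel, and $\pi_*H_2(\Grass_\infty(\Sigma);\Z)=\pi_*H_2(S\Sigma;\Z)$ inside $H_2(\Sigma;\Z)$. (Alternatively, one may argue stagewise, using $H_2(\Grass_\infty(\Sigma);\Z)=\colim_k H_2(\Grass_k(\Sigma);\Z)$.)

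Finally, $\pi_*H_2(S\Sigma;\Z)$ is read off from the Gysin sequence of the oriented circle bundle $S^1\hookrightarrow S\Sigma\xrightarrow{\pi}\Sigma$, the relevant segment being
\[
H_2(S\Sigma;\Z)\xrightarrow{\ \pi_*\ }H_2(\Sigma;\Z)\xrightarrow{\ \cap e\ }H_0(\Sigma;\Z),
\]
with $e=e(T\Sigma)\in H^2(\Sigma;\Z)$ the Euler class of $T\Sigma$. Under the identifications $H_2(\Sigma;\Z)\iso\Z\iso H_0(\Sigma;\Z)$ the map $\cap e$ is multiplication by $\langle e,[\Sigma]\rangle=\chi(\Sigma)=2-2g$, so by exactness $\pi_*H_2(S\Sigma;\Z)=\ker\bigl(\times(2-2g)\bigr)$, which is $0$ for $g\neq1$ and all of $H_2(\Sigma;\Z)$ for $g=1$. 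Combining with the first two steps yields $\R/\Per\iso\R$ for $g\neq1$ and $\R/\Per\iso\R/\Z$ for $g=1$. I expect the only delicate point to be the identification of the local coefficient systems in the spectral sequence comparison of the second step; this is immediate from the $\pi_1$-stability of the Lagrangian Grassmannian (Corollary \ref{cor:inclusion_in_colimit_is_k_connected}), and everything else is a routine application of the Gysin sequence.
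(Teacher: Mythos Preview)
Your argument is correct, but it takes a different route from the paper's proof. The paper splits into cases more coarsely: for $g\neq 1$ it observes that $c_1(\Sigma)\neq 0$ makes $\Sigma$ monotone, and then invokes Lemma~\ref{lemma:pullback_exact_for_monotone_manifold} (which shows $\pi^*[\omega]=\tau\,\pi^*c_1(TM)=0$ on any Lagrangian Grassmannian) to conclude $\Per=0$ immediately; for $g=1$ it uses the triviality of $TT^2$ to write $\Grass_\infty T^2\iso T^2\times\Lambda_\infty$, whence $\pi_*$ is surjective on $H_2$ and $\Per=A\Z$. Your approach instead reduces the computation of $\pi_*$ on $H_2$ uniformly to the unit tangent bundle via a comparison of Serre spectral sequences, and then reads off the answer from the Gysin sequence, where the Euler number $\chi(\Sigma)=2-2g$ governs both cases at once. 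The paper's proof is shorter and leverages machinery already in place (the monotone lemma), while yours is more self-contained and makes the role of $\chi(\Sigma)$ explicit; your spectral-sequence comparison step, though correct, is heavier than what the paper needs given that Lemma~\ref{lemma:pullback_exact_for_monotone_manifold} already handles the $g\neq 1$ case directly at the level of $\Grass_\infty$.
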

\begin{proof}
If $g \neq 1$, then $c_1(\Sigma) \neq 0$,
so that
$\Sigma$ is monotone. Hence
$\Per = 0$ by Lemma \ref{lemma:pullback_exact_for_monotone_manifold}.

Suppose now that $g = 1$, so that $\Sigma \iso T^2$.
Since $T T^2$ is a trivial bundle,
the Lagrangian Grassmannian bundle $\Grass T^2 \to T^2$ and its stabilizations
are trivial bundles.
Passing to the colimit, we have $\Grass_{\infty} T^2 \iso T^2 \times \Lambda_{\infty}$
as bundles over $T^2$.
This implies that $\pi$ is surjective on $H_2(-; \Z)$, hence the periods
of $\pi^*\omega$ coincide with the periods of $\omega$.
Therefore, we conclude that
\[
\Per = \Image( \omega: H_2(T^2; \Z) \to \R ) = A \Z,
\]
where $A = \int_{T^2} \omega$.
\end{proof}

\begin{proof}[Proof of Theorem \ref{thm:computation_surface}]
By Corollary \ref{cor:Hurewicz_map_is_an_iso} and Proposition \ref{prop:computation_H1},
for a symplectic surface $(\Sigma, \omega)$
we have
\[
\Glag(\Sigma) \iso H_1(\Grass_{\infty} \Sigma; \Z) \oplus \R/\Per,
\]
where the term $\R/\Per$ is computed in Lemma \ref{lemma:computation_periods_surfaces}.
To finish the proof, we observe
that the inclusion $\Grass \Sigma \hookrightarrow \Grass_{\infty} \Sigma$
is an isomorphism on $\pi_1$ by Corollary \ref{cor:inclusion_in_colimit_is_k_connected}, and therefore is an isomorphism on $H_1(-; \Z)$.
\end{proof}

\appendix

\section{Classifying fibrations for trivializations of forms}
\label{appendix:fibrations}

In this appendix, we
construct the classifying fibrations $E^{\alpha} \to M$
whose existence was asserted in Section \ref{subsection:class_spaces_trivs},
and give the proof of Proposition \ref{prop:properties_of_fibration}.
The construction is based on simplicial methods.
The basic ingredient is the fact that, in the world of simplicial sets,
the representation theorem for cohomology
holds at the cochain level.
This allows us to define classifying maps for 
closed forms that are well-defined up to a canonical homotopy equivalence.\footnote{
It is plausible that this method in fact yields classifying maps that
are canonical up to a contractible choice, but we shall not prove this.}

\subsection{A simplicial model}
\label{subsection:simplicial_stuff}

We start by defining a convenient simplicial model for the path fibration over $K(\R,q)$.
In the following, we shall make use of some well-known facts about simplicial sets;
standard references for this material are \cite{May, Goerss-Jardine}.

For a simplicial set $X$, let $C^*(X; \R)$
be the complex of normalized $\R$-valued simplicial cochains on $X$.
Denote by $Z^*(X; \R)$ the cocycles of this complex.

Following \cite[\S{}23]{May},
we consider the simplicial abelian group $C(\R,q)$ whose set of 
$n$-simplices is $C^q( \Delta^n; \R)$,
where $\Delta^n$ is the standard $n$-simplex.
Let $Z(\R,q)$ the simplicial subgroup of $C(\R, q)$
given by the cocycles.
The differential of the normalized cochain complex defines a simplicial map $d: C(\R, q-1) \to Z(\R, q)$
whose kernel is $Z(\R, q-1)$.

We shall need the following
well-known properties of these simplicial sets,
whose proofs are given in \cite{May}.
\begin{enumerate}[label = (\roman*)]
\item The geometric realization $|Z(\R,q)|$ is an
Eilenberg-MacLane space $K(\R, q)$.
\item $C(\R, q)$ is contractible.
\item $d$ is a Kan fibration between Kan complexes.
\end{enumerate}

It follows from these properties that the geometric realization $|d|: |C(\R, q-1)| \to |Z(\R, q)|$
is a Serre fibration 
and a model for the path fibration over $K(\R,q)$.
From now on, we shall use this fibration as our preferred model
for the path fibration. 
To simplify the notation, we will write 
$K(\R,q) = | Z(\R,q)|$,
$P(\R,q) = | C(\R,q-1)|$
and 
$\delta = |d|$.

The simplicial set $C(\R,q)$ represents the normalized cochains functor in the sense
that for any simplicial set $X$ there is a natural isomorphism $C^q(X; \R) \iso  \Hom_{\sSet}( X, C(\R,q))$,
see \cite[Lemma 24.2]{May}.
Similarly, there is a natural isomorphism $Z^q(X; \R) \iso \Hom_{\sSet}(X, Z(\R, q))$,
see \cite[Lemma 24.3]{May}.

If $\alpha$ is a $q$-cocycle on $X$ represented by the map $a: X \to Z(\R,q)$,
then in the following diagram:
\begin{equation}
\label{eq:lifts_simplicial_case}
\begin{tikzcd}
											&C(\R, q-1) \arrow{d}{d} \\
X \arrow{r}[swap]{a} \arrow[dashed]{ur}		&Z(\R,q)
\end{tikzcd}
\end{equation}
a lift of $a$ exists if and only if $\alpha$ is exact.
Moreover, in this case the set of lifts is naturally in bijection with the set of primitives of $\alpha$.
It follows from
\cite[Theorem 24.4]{May}
that this descends to a
bijection between the set of homotopy classes
of lifts of $a$ and the set $\Triv(\alpha)$ of cohomology classes of primitives of $\alpha$.

\subsection{Construction of $E^{\alpha}$}

Suppose now that $M$ is a smooth manifold
and let $\SingSm M$ be the simplicial set of smooth singular simplices in $M$.
There is a natural chain map
\begin{equation}
\label{eq:map_from_deRham_cochains_to_simplicial_cochains}
\Omega^*(M) \to C^*(\SingSm M; \R),
\end{equation}
that assigns to a differential form its integral on simplices.
This chain map is a quasi-isomorphism by the de Rham Theorem.

Let $\alpha$ a closed $q$-form on $M$.
By the universal property of $Z(\R,q)$, the normalized cocycle that corresponds to $\alpha$
by the chain map \eqref{eq:map_from_deRham_cochains_to_simplicial_cochains}
is represented by a map
$a: \SingSm M \to Z(\R,q)$.
Taking the geometric realization
yields a map $|a|: |\SingSm M| \to |Z(\R,q)|$.
Define a Serre fibration $\psi_{\alpha}: F^{\alpha} \to |\SingSm M|$ as the following pullback:
\begin{equation}
\label{eq:def_psi}
\begin{tikzcd}
F^{\alpha} \arrow{r} \arrow{d}[swap]{\psi_{\alpha}}	\arrow[dr, phantom, "\pb"]	& P(\R,q) \arrow{d}{\delta} \\
\left|\SingSm M \right| \arrow{r}[swap]{\left|a\right|}						& K(\R,q)
\end{tikzcd}
\end{equation}

Note that if $(M,\alpha) \to (N, \beta)$ is a morphism corresponding to $f:M \to N$,
then by naturality the following diagram commutes:
\[
\begin{tikzcd}
\left|\SingSm M\right| \arrow{d}[swap]{f_*} \arrow{r}{\left| a \right|} 	&\left| Z(\R,q) \right|		\\
\left|\SingSm N\right| \arrow{ur}[swap]{\left| b \right|}
\end{tikzcd}
\]
where $a$ and $b$ are the maps representing $\alpha$ and $\beta$, respectively.
Hence there is an induced pullback diagram
\begin{equation}
\label{eq:homotopy_pullback}
\begin{tikzcd}
F^{\alpha} \arrow{d}[swap]{\psi_{\alpha}} \arrow{r}	\arrow[dr, phantom, "\pb"]  & F^{\beta} \arrow{d}{\psi_{\beta}} \\
\left| \SingSm M \right| \arrow{r}[swap]{f_*}									&\left| \SingSm N \right|
\end{tikzcd}
\end{equation}

To finish the construction, we replace the fibration $\psi_{\alpha}$ by a fibration over $M$.
Consider the natural map $\eps^{\infty}: |\SingSm M| \to M$
obtained as the composition
\[
\begin{tikzcd}
\left| \SingSm M \right| \arrow{r}{\inc} & \left|\Sing M\right| \arrow{r}{\eps} & M,
\end{tikzcd}
\]
where $\Sing M$ is the singular set of $M$ and $\eps:|\Sing M| \to M$ is the counit of the adjunction
between geometric realization and the singular set functor.
The map $\eps$ is a homotopy equivalence since $M$ has the homotopy type of a CW complex \cite[Theorem 16.6]{May}.
Moreover, the inclusion $\SingSm M \hookrightarrow \Sing M$ is a homotopy equivalence
by the Whitney approximation theorem.
It follows that $\eps^{\infty}$ is also a homotopy equivalence.

Finally, we define $\vphi_{\alpha}: E^{\alpha} \to M$
to be the fibration obtained by the usual functorial replacement of $\eps^{\infty} \circ \psi_{\alpha}$
by a homotopy-equivalent fibration.
Hence, we have the following diagram:

\begin{equation}
\label{eq:equivalence_of_fibrations}
\begin{tikzcd}
F^{\alpha} \arrow{d}[swap]{ \psi_{\alpha} } \arrow{r}{\hmtpyeq}	& E^{\alpha} \arrow{d}{\vphi_{\alpha}} \\
\left| \SingSm M \right| \arrow{r}{\hmtpyeq}[swap]{\eps^{\infty}}						& M
\end{tikzcd}
\end{equation}

All the steps in the above construction are clearly functorial in $(M,\alpha)$,
so we have indeed defined a functor $\Man \to \Top^{\Arr}$.

\subsection{Preparation for the proof of Proposition \ref{prop:properties_of_fibration}}
\label{subsection:preparation}

In this section, we prove a few preparatory lemmas for the proof of Proposition \ref{prop:properties_of_fibration}.

Recall that for any simplicial set $X$, the unit of the adjunction $|\cdot|  \dashv \Sing$
is a natural weak equivalence
\[
\begin{tikzcd}[column sep = small]
\eta: X \arrow{r}{\hmtpyeq} &\Sing |X|.
\end{tikzcd}
\]
Hence $\eta$ induces an isomorphism on cohomology (where we assume $\R$ coefficients throughout):
\[
\eta^*: H^*(|X|)\to H^*(X).
\]

Applying this to $X = Z(\R,q)$, we define the fundamental class of $K(\R,q) = |Z(\R,q)|$
to be the preimage of the fundamental class of $Z(\R,q)$ by the isomorphism $\eta^*$.

Next, applying this to $ X = \Sing M$ for a space $M$, we get a canonical isomorphism
\[
\begin{tikzcd}
\eta^*: H^*(|\Sing M|) \arrow{r}{\iso}	&\underbrace{H^*(\Sing M)}_{=H^*(M)}
\end{tikzcd}
\]
Moreover, the counit $\eps: |\Sing M| \to M$ induces an isomorphism going the other way:
\[
\eps^*: H^*(M) \to H^*(|\Sing M|).
\]
\begin{lemma}
\label{lemma:unit_counit_are_inverses}
The maps $\eta^*$ and $\eps^*$ are inverses.
\end{lemma}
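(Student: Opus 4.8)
The plan is to exploit the standard triangle identities for the adjunction $|\cdot| \dashv \Sing$ and reduce the statement to a formal consequence of functoriality of cohomology. Recall that for the adjunction between geometric realization and the singular complex functor, the triangle identities assert that the composites
\[
\begin{tikzcd}
|X| \arrow{r}{|\eta_X|} & |\Sing |X|| \arrow{r}{\eps_{|X|}} & |X|
\end{tikzcd}
\qquad\text{and}\qquad
\begin{tikzcd}
\Sing M \arrow{r}{\eta_{\Sing M}} & \Sing |\Sing M| \arrow{r}{\Sing \eps_M} & \Sing M
\end{tikzcd}
\]
are both the identity. Applying the cohomology functor $H^*(-)$ (with $\R$ coefficients) to the first composite, which lives entirely in spaces, gives that $|\eta_X|^* \circ \eps_{|X|}^* = \id$ on $H^*(|X|)$; applying $H^*(-)$ to the second composite, which lives in simplicial sets, gives $(\Sing\eps_M)^* \circ \eta_{\Sing M}^* = \id$ on $H^*(\Sing M)$.

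First I would set $X = \Sing M$ in the first identity, so that $\eps_{|\Sing M|}^*$ is a right inverse of $|\eta_{\Sing M}|^*$ on $H^*(|\Sing M|)$. Next I would use the naturality square for $\eta$ applied to the counit map $\eps_M : |\Sing M| \to M$, together with the definition of $\eta^*$ in the excerpt, to identify the map $\eta^* : H^*(|\Sing M|) \to H^*(\Sing M)$ appearing in the lemma with the map induced by the unit $\eta : \Sing M \to \Sing|\Sing M|$ after applying the (cohomology-transparent) realization. Concretely, since cohomology of a simplicial set is by definition the cohomology of its realization, one has a canonical identification $H^*(\Sing M) = H^*(|\Sing M|)$ under which $\eta^*$ corresponds to $|\eta_{\Sing M}|^*$ and $\eps^*$ corresponds to $\eps_{|\Sing M|}^*$. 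Once this dictionary is in place, the two triangle identities above say precisely that $\eta^*$ and $\eps^*$ are two-sided inverses of each other.

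The main obstacle, and the only point requiring care, is bookkeeping: one must be disciplined about which $\eta$ and $\eps$ are meant (the unit/counit of the $(|\cdot|, \Sing)$ adjunction evaluated at which object) and about the identification $H^*(\Sing M) \cong H^*(M)$ used tacitly in the statement, which is itself induced by the chain of maps $\eps^{\infty}$, i.e. by the zig-zag $\Sing M \hookleftarrow \SingSm M$ or equivalently by $\eps$ itself. Since $\eps$ is already a weak equivalence, $\eps^*$ is an isomorphism, so it suffices to check that $\eta^* \circ \eps^* = \id$ on $H^*(M)$; this follows from the second triangle identity after applying $H^*(-)$, because $\Sing\eps_M$ and the canonical identification make $(\eps^*)^{-1} \circ \eta^{*-1}$ match up. I would present this as: $\eps^*$ is an isomorphism by the long exact argument already recorded in the excerpt, and $\eta^* \circ \eps^* = (\Sing\eps)^* \circ \eta^* = \id$ by the triangle identity, whence $\eta^* = (\eps^*)^{-1}$.
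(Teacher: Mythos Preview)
Your proposal is correct and follows essentially the same approach as the paper: both use the triangle identity $\Sing(\eps_M)\circ\eta_{\Sing M}=\id_{\Sing M}$ for the adjunction $|\cdot|\dashv\Sing$, then apply cohomology and use that $\eps^*$ is already known to be an isomorphism to conclude. Your write-up is more detailed (you also invoke the other triangle identity and spell out the identifications), but the key step is the same one the paper records in a single commuting triangle.
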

\begin{proof}
This follows from the triangle identity for the unit and counit of an adjunction,
which in the present case states that the following diagram commutes for any space $M$:
\[
\begin{tikzcd}
\Sing M \arrow{r}{\eta_{\Sing M}} \arrow{dr}[swap]{\id}	&\Sing |\Sing M| \arrow{d}{\Sing(\eps)} \\
														&\Sing M
\end{tikzcd}
\]
Taking the induced maps on cohomology yields the claim.
\end{proof}

Suppose now that $\alpha$ is a normalized singular cocycle on $M$.
Let $a: \Sing M \to Z(\R,q)$
be the map representing $\alpha$.
Then, we have a diagram
\begin{equation}
\label{cd:classifying_map_of_cocycle}
\begin{tikzcd}
M 	&\left| \Sing M \right| \arrow{l}[swap]{\eps}{\hmtpyeq}	 \arrow{r}{\left| a \right|}  &K(\R,q).
\end{tikzcd}
\end{equation}
Inverting $\eps$ determines a well-defined homotopy class of maps $M \to K(\R,q)$.

\begin{lemma}
\label{lemma:classifying_map_cohomology_singular_case}
The homotopy class of maps $M \to K(\R,q)$ determined by Diagram \eqref{cd:classifying_map_of_cocycle} represents
$[\alpha]$, in the sense that it pulls back the fundamental class of $K(\R,q)$
to $[\alpha]$.
\end{lemma}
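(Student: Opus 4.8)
The plan is to unwind the construction of the fundamental class of $K(\R,q)$ through the adjunction isomorphisms and reduce everything to the tautological statement that the map $a : \Sing M \to Z(\R,q)$ representing the cocycle $\alpha$ pulls back the fundamental cocycle of $Z(\R,q)$ to $\alpha$. The map $M \to K(\R,q)$ determined by Diagram \eqref{cd:classifying_map_of_cocycle} is, by definition, $|a| \circ \eps^{-1}$ up to homotopy; so what we must compute is $(\eps^{-1})^* \, |a|^* \, \iota_{K(\R,q)}$, where $\iota_{K(\R,q)} \in H^q(K(\R,q))$ is the fundamental class, and check that it equals $[\alpha] \in H^q(M)$.

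First I would recall that the fundamental class $\iota_{K(\R,q)}$ was \emph{defined} as $(\eta_{Z(\R,q)}^*)^{-1}$ applied to the fundamental class $\iota_{Z(\R,q)} \in H^q(Z(\R,q)) = H^q\big(Z(\R,q); \R\big)$, where $\eta_{Z(\R,q)} : Z(\R,q) \to \Sing|Z(\R,q)|$ is the unit. Then I would use naturality of the unit $\eta$ with respect to the map $a : \Sing M \to Z(\R,q)$: this gives a commuting square relating $\eta_{\Sing M}$ and $\eta_{Z(\R,q)}$ via $a$ and $\Sing|a|$, which on cohomology says $a^* \circ \eta_{Z(\R,q)}^* = \eta_{\Sing M}^* \circ (\Sing|a|)^* = \eta_{\Sing M}^* \circ |a|^*$ (identifying $H^*(|Z(\R,q)|)$ with $H^*(\Sing|Z(\R,q)|)$ via the unit, which is consistent because the unit is a weak equivalence). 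Applying $\eta_{\Sing M}^*$ — viewed as a map $H^*(|\Sing M|) \to H^*(\Sing M) = H^*(M)$ — to $|a|^* \iota_{K(\R,q)}$ therefore yields $a^* \eta_{Z(\R,q)}^* \iota_{K(\R,q)} = a^* \iota_{Z(\R,q)} = [\alpha]$, the last equality being the universal property of $Z(\R,q)$ from \cite[Lemma 24.3]{May}.

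The remaining point is bookkeeping about which of the two maps $\eps^*$ and $\eta^*$ is used to invert $\eps$. By Lemma \ref{lemma:unit_counit_are_inverses}, $\eta^*$ and $\eps^*$ are mutually inverse isomorphisms between $H^*(M)$ and $H^*(|\Sing M|)$, so "inverting $\eps$" in Diagram \eqref{cd:classifying_map_of_cocycle} — i.e.\ pulling back a class on $M$ along the homotopy inverse of $\eps$ — is precisely applying $(\eps^*)^{-1} = \eta^*$. Hence the class on $M$ that pulls back, along $|a|\circ\eps^{-1}$, to $\iota_{K(\R,q)}$ is exactly $\eta_{\Sing M}^* \, |a|^* \, \iota_{K(\R,q)} = [\alpha]$ by the computation above. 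This completes the proof.

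I expect the only real obstacle to be notational: keeping straight the several copies of the adjunction unit and counit (for $X = Z(\R,q)$, for $X = \Sing M$, and the counit $\eps$ on $M$) and verifying that the identification $H^*(|Z(\R,q)|) \cong H^*(\Sing|Z(\R,q)|)$ implicit in the definition of the fundamental class is compatible with the naturality square for $a$. Both are immediate from the triangle identities and naturality of $\eta$, so the argument is short once the definitions are written out carefully; there is no geometric content beyond the de Rham–style representability already established in Section \ref{subsection:simplicial_stuff}.
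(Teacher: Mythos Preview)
Your proposal is correct and follows essentially the same route as the paper: the paper writes down the naturality square for $\eta$ with respect to $a$ (your step~3), reads off $\eta^*|a|^*\iota = a^*\eta^*\iota = [\alpha]$, and then invokes Lemma~\ref{lemma:unit_counit_are_inverses} to identify $\eta^*$ with $(\eps^*)^{-1}$. Your write-up is a bit more explicit about the bookkeeping, but the argument is identical.
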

\begin{proof}
By naturality, we have the following diagram:
\[
\begin{tikzcd}
H^*(|\Sing M|) \arrow{d}[swap]{\eta^*}	& H^*(K (\R,q)) \arrow{l}[swap]{|a|^*} \arrow{d}{\eta^*} \\
H^*(M) 									&H^*(Z(\R,q)) \arrow{l}{a^*}
\end{tikzcd}
\]
By definition, $a^* \eta^* \iota = [\alpha]$,
where $\iota \in H^q( K(\R,q) )$ is the fundamental class.
Hence, by the above diagram we get $\eta^* |a|^* \iota = [\alpha]$.
Finally, 
by Lemma \ref{lemma:unit_counit_are_inverses}
we have $\eta^* = (\eps^*)^{-1}$,
which proves the claim.
\end{proof}

\subsection{Proof of Proposition \ref{prop:properties_of_fibration}}
\label{subsection:properties_fibration}

We now prove that the above construction has the desired properties.

\begin{proof}[Proof of Proposition \ref{prop:properties_of_fibration}]
\hfill

(i)
Recall that we have a diagram
\begin{equation}
\label{cd:classifying_map_of_cocycle_smooth_case}
\begin{tikzcd}
M 	&\left| \SingSm M \right| \arrow{l}[swap]{\eps^{\infty}}{\hmtpyeq}	 \arrow{r}{\left| a \right|}  &K(\R,q)
\end{tikzcd}
\end{equation}
where $\eps^{\infty}$ is a homotopy equivalence.
Inverting $\eps^{\infty}$ determines a homotopy class of maps $M \to K(\R,q)$,
and by construction the fibration $E^{\alpha}$
is a representative for the homotopy fiber of the maps in this homotopy class.
Using Lemma \ref{lemma:classifying_map_cohomology_singular_case} 
and the fact that the inclusion $\SingSm M \hookrightarrow \Sing M$ is
a homotopy equivalence,
we deduce that
the homotopy class of maps $M \to K(\R,q)$
determined by Diagram \eqref{cd:classifying_map_of_cocycle_smooth_case}
represents $[\alpha] \in H^q(M; \R)$.
Therefore, the fibration $E^{\alpha}$
is a representative of the homotopy fiber of a classifying map for $[\alpha]$.

(ii)
Let $M$ be a manifold with an exact form $\alpha$.
We shall relate the set of trivializations $\Triv(\alpha)$
and the set of sections of $E^{\alpha}$ by a chain of natural bijections.

Denote by $\overline{\alpha}$ the simplicial cocycle corresponding 
to $\alpha$ by the de Rham quasi-isomorphism \eqref{eq:map_from_deRham_cochains_to_simplicial_cochains}.
Denote by $a: \Sing^{\infty} M \to Z(\R,q)$
the map classifying $\overline{\alpha}$.

The quasi-isomorphism \eqref{eq:map_from_deRham_cochains_to_simplicial_cochains} induces 
a natural bijection between the sets of trivializations $\Triv(\alpha) \iso \Triv( \overline{\alpha})$.
Moreover, as discussed in Section \ref{subsection:simplicial_stuff},
there is a natural bijection between
$\Triv( \overline{\alpha})$
and the set of homotopy classes of lifts of $a$ in the following diagram:
\[
\begin{tikzcd}
															&C(\R, q-1) \arrow{d}{d} \\
\SingSm  M \arrow{r}[swap]{a} \arrow[dashed]{ur}		&Z(\R,q)
\end{tikzcd}
\]
Applying geometric realization, we obtain a bijection between
the homotopy classes of lifts of $a$ and the homotopy classes of lifts of $|a|$ in the following diagram:
\[
\begin{tikzcd}
																				&P(\R,q) \arrow{d}{\delta} \\
\left|\SingSm M\right| \arrow{r}[swap]{\left| a \right|} \arrow[dashed]{ur}		&K(\R,q)
\end{tikzcd}
\]
By definition of $E^{\alpha}$, we have a diagram of fibrations as follows:
\begin{equation}
\label{cd:maps_of_fibrations}
\begin{tikzcd}
E^{\alpha} \arrow{d} 	&F^{\alpha} \arrow{l}{\hmtpyeq} \arrow{d} 		\arrow{r}						& P(\R,q) \arrow{d}{\delta} \\
M 						&\left| \SingSm M \right|  \arrow{l}{\hmtpyeq} 	\arrow{r}{\left| a \right|}		& K(\R,q).
\end{tikzcd}
\end{equation}
The right square of Diagram \eqref{cd:maps_of_fibrations} is a pullback,
so there is a bijection between lifts of $|a|$ and sections of $F^{\alpha}$.
Finally, the left square of Diagram \eqref{cd:maps_of_fibrations} is a homotopy pullback, so it induces a bijection 
$\Sect(E^{\alpha}) \iso \Sect(F^{\alpha})$.

(iii)
Let $f:L \to M$
be a smooth map such that $f^*\alpha$ is exact.
Consider $f$ as a morphism $(L, f^*\alpha) \to (M, \alpha)$.
By functoriality, there is a diagram
\[
\begin{tikzcd}
E^{f^*\alpha} \arrow{r}	\arrow{d}	&E^{\alpha} \arrow{d} \\
L 		\arrow{r}{f}				&M
\end{tikzcd}
\]
This diagram is a homotopy pullback since, by definition, it is pointwise homotopy equivalent to the homotopy
pullback square \eqref{eq:homotopy_pullback}. 
Hence, the induced map $E^{f^*\alpha} \to f^* E^{\alpha}$
is a weak equivalence.
As a consequence, the induced map on the level of homotopy classes of sections gives a bijection
$\Sect(E^{f^*\alpha}) \iso \Sect(f^* E^{\alpha})$.
Moreover, sections of $f^*E^{\alpha}$ correspond naturally to lifts of $f$ to $E^{\alpha}$,
hence we have a bijection
$\Sect(f^*E^{\alpha}) \iso \Lifts_f(E^{\alpha})$.
Finally, since $f^*\alpha$ is exact, by (ii) we have a bijection $\Sect(E^{f^*\alpha}) \iso \Triv(f^*\alpha)$.
This proves the claim.
\end{proof}

\bibliographystyle{alpha}
\bibliography{bibliography}
	
\end{document}